\documentclass[12pt,reqno]{amsart}
\usepackage{amssymb}
\usepackage{txfonts}
\usepackage{amsfonts}
\usepackage{amsmath}
\usepackage{graphicx}
\usepackage{hyperref}
\usepackage{float}
\usepackage{epstopdf}
\usepackage{color}

\setcounter{MaxMatrixCols}{10}

\allowdisplaybreaks
\newtheorem{theorem}{Theorem}[section]
 \newtheorem{definition}[theorem]{Definition}

 \newtheorem{lemma}[theorem]{Lemma}
 \newtheorem{proposition}[theorem]{Proposition}
 \newtheorem{corollary}[theorem]{Corollary}

\numberwithin{equation}{section}
\setlength{\textheight}{23 cm}
\setlength{\textwidth}{15 cm}
\setlength{\topmargin}{-1cm}
\setlength{\oddsidemargin}{1.0 cm}
\setlength{\evensidemargin}{1.0 cm}



%






\def\FRAME#1#2#3#4#5#6#7#8
{
 \begin{figure}[ptbh]
 \begin{center}
 \includegraphics[height=#3]{#7}
 \caption{#5}
 \label{#6}
 \end{center}
 \end{figure}
}

\begin{document}

\title{Dirichlet forms  and critical exponents on fractals}

\pagestyle{plain}
\author {Qingsong Gu}
\address{Department of Mathematics\\ The Chinese University of Hong Kong, Hong Kong, China}
\email{qsgu@math.cuhk.edu.hk}
\author {Ka-Sing Lau}
\address{Department of Mathematics\\ The Chinese University of Hong Kong, Hong Kong, China\\
\& School of Mathematics and Statistics, Central China Normal University, Wuhan, China\\
\& Department of Mathematics, University of Pittsburgh, Pittsburgh, Pa. 15217, USA}
\email{kslau@math.cuhk.edu.hk}

\subjclass[2010]{Primary 28A80; Secondary 46E30, 46E35}
\keywords{ Besov space; Dirichlet form; trace; heat kernel; p.c.f. fractal;}
\thanks {The research is supported in part by the HKRGC grant.}

\maketitle
\begin{abstract}  Let $B^{\sigma}_{2, \infty}$ denote the Besov space defined on  a compact set $K \subset {\Bbb R}^d$ which is equipped with an $\alpha$-regular measure $\mu$. The {\it critical exponent} $\sigma^*$ is the supremum of the $\sigma$ such that $B^{\sigma}_{2, \infty} \cap C(K)$ is dense in $C(K)$.  It is well-known that for many standard self-similar sets $K$, $B^{\sigma^*}_{2, \infty}$  are the domain of some local regular Dirichlet forms. In this paper, we explore new situations that the underlying fractal sets admit inhomogeneous resistance scalings, which yield two types of critical exponents. We  will restrict our consideration on the p.c.f. sets. We first develop a technique of quotient networks to study the general theory of these critical exponents. We then construct two asymmetric p.c.f. sets, and use them to illustrate  the theory and examine the function properties of the associated Besov spaces at the critical exponents; the various Dirichlet forms on these fractals will also be studied.
\end{abstract}

\bigskip

\section{\bf Introduction}

\medskip

Let $K$ be a closed subset in ${\Bbb R}^d$ with the Euclidean metric, and let $\mu$ be an $\alpha-$regular measure on $K$, that is, there exists $\alpha>0$ such that for any ball $B(x,r)$ with $0<r<\text{diam}(K)$,
\begin{equation} \label {eq1.1}
\mu(B(x,r))\asymp r^\alpha.
\end{equation}
(Here $f\asymp g$ means there exists constant $C>0$ such that
$C^{-1}g\leq f\leq Cg.$)\  Fix $\sigma>0$, for $u\in L^2(K,\mu)$, let
\begin{equation} \label {eq1.2}
[u]^2_{B_{2,\infty}^\sigma}:=\sup\limits_{0<r<1}r^{-\alpha-2\sigma}
\int_K\int_{B(x,r)}|u(x)-u(y)|^2d\mu(y)d\mu(x),
\end{equation}
and define  $B^\sigma_{2,\infty}:=\{u\in L^2(K, \mu):||u||_{B^\sigma_{2,\infty}}<\infty\}$ with norm
$||u||_{B^\sigma_{2,\infty}}:=||u||_2+ [u]_{B_{2,\infty}^\sigma} .$
The space is a Banach space and belongs to the class of Besov spaces (cf., for example \cite{GHL},  \cite{J}, \cite {JW}; note that this space is also denoted by Lip$(\sigma, 2, \infty)$.)

\medskip

Obviously, $B^\sigma_{2, \infty} \subset B^{\sigma'} _{2, \infty}$ if $0<\sigma' < \sigma$. The space $B^\sigma_{2, \infty}$ can be dense in $C(K)$, or dense in $L^2(K, \mu)$; it can also become trivial as $\sigma$ increases, depending on the geometry of $K$ and $\mu$.  Let us define the {\it critical exponents} on  $(K,\mu)$ by
\begin{equation*}
\sigma^*:=\sup\big\{\sigma:\ B^\sigma_{2,\infty}\cap C(K) \text{ is dense in $C(K)$}\big\}\
 .
\end{equation*}
For many self-similar sets $K$, $B^{\sigma^*}_{2,\infty}$ are the domains of some local regular Dirichlet forms (if exist), and they are essential in the study of the Laplacians, Brownian motions, and the associated heat kernels  \cite{B, GHL, HW, J, K, Pi, S}.  The value $\beta^* = 2 \sigma^*$ is called the {\it walk dimension} of $(K,\mu)$. It is an important parameter in the study of  heat kernels, which corresponding to the speed of diffusion on the underlying sets. Heuristically, the larger the value $\beta^*$, the harder is for the diffusion process (Brownian motion) to drift away from the initial position. It is well-known that if $K$ is a domain in ${\Bbb R}^d$, then $\sigma^*=1$; if $K$ is the $d$-dimensional Sierpinski gasket, then $\sigma^*= \log (d+3)/(2\log 2)$ \cite{J}. There are  extensions on the nested fractals \cite{Pi}, and approximate value of the Sierpinski carpet \cite{BB}; also for Cantor-type sets, $\sigma^*= \infty$ \cite{KLW}.  More generally, the notion of Besov space and critical exponents have been extended to  metric measure spaces $(K, d, \mu)$, where $(K, d)$ is a locally compact, separable metric space, and $\mu$ is $\alpha$-regular as before. It is known that with a heat kernel assumption and a  {\it chain condition} on $(X, d)$,  we have $1 \leq \sigma^* \leq\frac 12 (\alpha +1)$ \cite{GHL}.

\bigskip

 In the previous studies of  Dirichlet forms on $B^{\sigma^*}_{2, \infty}$, one often assumes that the space admits a Brownian motion with a Gaussian or a sub-Gaussian heat kernel. In such cases, it is known that if $\sigma > \sigma^*$,  then $B^\sigma_{2, \infty}$ consists of constant functions only.  For this we define another critical exponent
\begin{equation*}
 \sigma^\#:=\sup\big\{\sigma: \ B^\sigma_{2,\infty} \  \hbox {contains non-constant functions} \big\},
\end{equation*}
Clearly, $\sigma^* \leq \sigma^\#$,  and  for the standard examples, we always have  $\sigma^* =\sigma^\# $. In this paper, we will bring up the different situations through two asymmetric self-similar sets, and give a detail study of the two critical exponents as well as  the functional behaviors of the associated Besov spaces. The investigation is intended to get a better understanding of the local regular  Dirichlet forms, of which the existence is still not clear on the more general fractal sets.

\bigskip

Our consideration is on the {\it post critically finite} (p.c.f.) self-similar sets \cite{K}. Let $\{F_i\}_{i=1}^N$ be an iterated function system (IFS) of the form $F_i(x) = \rho(x-b_i)+b_i$ with $0<\rho<1,\ b_i \in {\Bbb R}^d$, and let $K$ be the self-similar set. Assume that the IFS has the p.c.f. property \cite {K}, let $V_0$ be the boundary of $K$,  $V_n = \bigcup_{i=1}^NF_i(V_{n-1})= \bigcup_{|\omega| =n} F_{\omega} (V_0) $, and $V_*= \bigcup_{n=1}^\infty V_n$. We  write $V_\omega = F_{\omega} (V_0)$, and for a function $u$ on $V_n$, we define
\begin {equation} \label {eq1.3}
E_n[u] = {\sum}_{x,y\in V_\omega ;\ |\omega| = n}|u(x)-u(y)|^2,
\end{equation}
  and call it a {\it primal energy} on $V_n$. According  to  Jonsson \cite {J} and Bodin \cite {Bo} (see also \cite{GL}), we have the following  discrete expression of the Besov semi-norm.

\medskip

\begin{proposition}\label{th1.1} For  a p.c.f. self-similar set defined by the IFS as above, and for  $2\sigma>\alpha$ we have
\begin{equation} \label {eq1.4}
[u]^2_{B^\sigma_{2,\infty}}\asymp
\sup_{j\geq 0}\Big \{ \rho^{-(2\sigma-\alpha)j}E_j [u]\Big\}.
\end{equation}
\end{proposition}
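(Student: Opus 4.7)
The plan is to prove the equivalence in three stages: dyadic discretization of the supremum over $r$, geometric replacement of Euclidean balls by $\rho$-adic cells, and finally the passage between cell-average integrals and the vertex sums defining $E_j[u]$.

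First I would observe that, since the inner integrand in \eqref{eq1.2} is monotone increasing in $r$ and $\mu$ is $\alpha$-regular, the prefactor $r^{-\alpha-2\sigma}$ changes by at most a fixed constant across each interval $r\in[\rho^{j+1},\rho^j)$. Consequently
$$[u]^2_{B^\sigma_{2,\infty}}\asymp \sup_{j\geq 0}\rho^{-(\alpha+2\sigma)j}\int_K\int_{B(x,\rho^j)}|u(x)-u(y)|^2\,d\mu(y)\,d\mu(x).$$
Next I would exploit the geometry of the p.c.f. self-similar set to replace the Euclidean ball $B(x,\rho^j)$ by the $\rho$-adic cell containing $x$. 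Two elementary facts suffice: (i) $\mathrm{diam}(V_\omega)\leq C\rho^j$ for $|\omega|=j$, so any pair of points in a common level-$j$ cell already lies in a ball $B(x,C\rho^j)$; and (ii) by the p.c.f. assumption there is a finite constant $c$ such that any two points with $d(x,y)\leq\rho^j$ belong to a common level-$(j-c)$ cell. This yields
$$\int_K\int_{B(x,\rho^j)}|u(x)-u(y)|^2\,d\mu(y)\,d\mu(x)\asymp \sum_{|\omega|=j}\int_{V_\omega}\int_{V_\omega}|u(x)-u(y)|^2\,d\mu(x)\,d\mu(y),$$
where the finite shift $c$ is absorbed into the supremum over $j$.

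The remaining step is to identify the right-hand cell-integral sum with $\rho^{2\alpha j}E_j[u]$, which together with the leading $\rho^{-(\alpha+2\sigma)j}$ produces the advertised factor $\rho^{-(2\sigma-\alpha)j}$. The comparison in one direction is easy: for boundary vertices $p,q\in V_\omega$ I would choose small sub-cells $V_{\omega\tau}$ and $V_{\omega\tau'}$ at a fixed finer level $j+N$ that isolate $p$ and $q$, and use the averages $\bar u_{\omega\tau}$, $\bar u_{\omega\tau'}$ as proxies for $u(p),u(q)$; Cauchy--Schwarz then yields $\mu(V_\omega)^2|\bar u_{\omega\tau}-\bar u_{\omega\tau'}|^2\lesssim \int_{V_\omega}\!\int_{V_\omega}|u-u|^2$. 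The opposite direction uses the same telescoping along a nested chain $V_\omega\supset V_{\omega^{(1)}}\supset V_{\omega^{(2)}}\supset\cdots$ shrinking to a vertex $p$: the averages $\bar u_{\omega^{(k)}}$ converge to $u(p)$ by Lebesgue differentiation, and each increment $|\bar u_{\omega^{(k)}}-\bar u_{\omega^{(k+1)}}|^2$ is controlled by the level-$(j+k)$ cell-integral, i.e.\ by $\rho^{(2\sigma-\alpha)(j+k)}\sup_m\rho^{-(2\sigma-\alpha)m}E_m[u]$. Summing the resulting geometric series closes the estimate.

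The main obstacle is this last telescoping step, because it is where the hypothesis $2\sigma>\alpha$ is genuinely needed: the geometric series $\sum_k\rho^{(2\sigma-\alpha)k}$ converges precisely under this assumption, and simultaneously the assumption is what provides a continuous representative of $u\in B^\sigma_{2,\infty}$ so that the point values at vertices of $V_*$ make sense. The first two stages are essentially bookkeeping using self-similarity and the finite p.c.f. intersection structure; the third stage is where the Besov regularity must cross the Sobolev-type threshold into genuine pointwise control, and getting the telescoping constants uniform in $\omega$ and $j$ (so that a single sup on the right dominates all cells) is the delicate part of the argument.
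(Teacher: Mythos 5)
The central problem is your Stage 2. The claim that there is a uniform constant $c$ such that any two points $x,y\in K$ with $|x-y|\le\rho^j$ lie in a common cell of level $j-c$ is false for p.c.f.\ sets: on the Sierpinski gasket, points on opposite sides of a junction vertex of $V_1$ (a point of $F_1(K)\cap F_2(K)$, say) can be arbitrarily close in the Euclidean metric while the smallest cell containing both is $K$ itself. Consequently your displayed equivalence between $\int_K\int_{B(x,\rho^j)}|u(x)-u(y)|^2\,d\mu\,d\mu$ and the sum of same-cell integrals fails in exactly the direction needed for $[u]^2_{B^\sigma_{2,\infty}}\lesssim\sup_j\rho^{-(2\sigma-\alpha)j}E_j[u]$: take $u\in L^2$ equal to two different constants on two $1$-cells meeting at a junction point. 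Then all same-cell integrals at levels $j\ge1$ vanish, while the pairs straddling the junction contribute $\gtrsim\rho^{2\alpha j}$ to the ball integral at scale $\rho^j$, so after multiplying by $\rho^{-(\alpha+2\sigma)j}$ the ball side blows up like $\rho^{(\alpha-2\sigma)j}$ when $2\sigma>\alpha$, whereas your cell side stays bounded. The cross-cell pairs near junctions cannot be reduced to same-cell integrals at a shifted level; the standard argument (Jonsson \cite{J}, Bodin \cite{Bo}, and \cite{GL}, to which this paper defers the proof of Proposition \ref{th1.1}) handles them by chaining through the shared boundary vertices of adjacent level-$j$ cells, i.e.\ by vertex differences $|u(p)-u(q)|^2$ with $p,q\in V_\omega$ that are genuinely part of $E_j[u]$, combined with in-cell telescoping; this is where finite ramification and connectivity of the p.c.f.\ structure enter, and it is precisely one of the ``technical steps'' the paper says are supplied in \cite{GL}.

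There are secondary issues as well. Even granting a reduction to cells, your Stage 3 bounds are stated cell-by-cell, and a per-pair estimate does not suffice: there are $\asymp\rho^{-\alpha j}$ cells at level $j$, so bounding each vertex-to-average error separately and then summing loses this factor. One must keep the sum over cells inside the telescoping --- using that distinct level-$j$ vertices have boundedly overlapping $\rho^{j+k}$-balls and a weighted Cauchy--Schwarz over the scales $k$ --- so that the summed increments at scale $\rho^{j+k}$ reassemble into the global quantity $\int_K\int_{B(x,c\rho^{j+k})}|u(x)-u(y)|^2\,d\mu\,d\mu$ (respectively into $E_{j+k}[u]$ in the other direction). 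You correctly flag uniformity in $\omega$ and $j$ as the delicate point, but the sketch does not show how the sum over cells is absorbed. Two smaller points: the inner integrals should be over the cells $K_\omega=F_\omega(K)$ rather than over $V_\omega=F_\omega(V_0)$, which is a finite and hence $\mu$-null set; and since the paper itself gives no proof of Proposition \ref{th1.1} but cites \cite{J}, \cite{Bo}, \cite{GL}, the comparison here is with that chaining argument rather than with an in-paper proof.
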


\bigskip
In the notion of electrical network, the primal energy form $E_n$ corresponds to  a network $G_n$ on $V_n$ with unit resistance on each pair of vertices in $V_\omega$ ($|\omega|=n$) in  $V_n$.  We denote by $R_n(p, q),\  p, q \in V_0$,   the trace (or induced resistance) of $G_n$ on $V_0$. For example, on the Sierpinski gasket, we have $R_n(p, q) = r^{-n} = \Big (\frac 53\Big)^n$ for all $p, q \in V_0$, and for the Vicsek cross, $R_n(p, q) = r^{-n} = 3^n$; in these cases, for any $n\geq 1$ and $u$ on $V_0$, there is a minimal energy extension of $u$ on $V_*$ (and hence on $K$) such that $r^{-n}E_n [u]= E_0[u]$, and for $u$ on $V_*$, the sequence  $\{r^{-n}E_n [u]\}_n$ is increasing, so that ${\mathcal E} [u]:= \lim_{n\to \infty}r^{-n}E_n [u]$ exists, and $\mathcal E$ is the classical local regular Dirichlet form on $K$. This $r$ is called a {\it renormalizing factor}, and  $r = \rho^{(2\sigma^*-\alpha)}$ in \eqref{eq1.4}.

\bigskip

In this paper, we will study the case that the traces $R_n(p, q),  p, q \in V_0$ have different growth rates, which give rise to the two critical exponents of the Besov spaces on $K$.  Our first task is to  develop the needed theoretical background  for the two critical exponents. Roughly speaking, $\sigma^*$ is determined by the minimum growth rate of the sequences $\{R_n(p,q)\}_n$ for $p, q\in V_0$ (Theorem \ref{th4.1}), and $\sigma^\#$ is determined by the maximum of such growth rates (Theorem \ref{th4.3}).  We also  provide some criteria of the density of  $B^{\sigma^*}_{2, \infty}$ and $B^{\sigma^\#}_{2, \infty}$ in $C(K)$ and $L^2(K, \mu)$ respectively (Theorem \ref{th4.1}(i)-(ii), Propositions \ref{th4.2} and \ref{th4.4}). One of the major techniques in this study is to define an {\it equivalent relation} to partition  the $V_n$'s according to the growth rates of $R_n(p, q),\  p, q \in V_0$,  and consider the equivalent classes and the quotient network.  From the electrical network point of view,  taking quotient means  shorting the circuit (putting zero resistance) at the vertices in the equivalent classes.  The quotient network can simplify calculations,  and give interesting properties and different geometries, in particular, the growth rate of the traces of the original network can be modified to the need on the quotient.  We give a detail study of the structure of the quotient network in Section 3 (Theorem \ref {th3.2}): we show that the equivalent classes are related to the attractors of a  graph directed system \cite {F, MW},  and give a sufficient condition for $R_n(p, q)$ to be a renormalization factor  localized on certain equivalent classes (Lemma \ref{th3.4}).

\medskip

We remark that the device of quotient network was first considered by Sabot \cite {Sa} with a different definition ($G$-relation, $G$ for group), which is used to study the existence and uniqueness of Dirichlet forms on symmetric ramified self-similar fractals. We will draw some comparison of the two in Section 3.
\bigskip

We will  present two asymmetric p.c.f. sets that the traces $R_n(p, q), \ p, q \in V_0$, have different growth rates. We  make special emphasis on the constructive aspects to illustrate the new situations. The  first example is modified from the Vicsek cross by adding two eyebolts on the cross to produce the irregularity  (see Figure \ref{fig5}), we call it the {\it  eyebolted Vicsek cross}. It consists of $21$ maps  with contraction ratio $1/9$, and has four boundary points $V_0$. By equipping the $V_n$  with the primal energy, and using  a generalized $\Delta$-Y transform from the electrical network theory, we show that the traces $R_n(p,q)$  have different growth rates, but the same power of growth $9^n$ (Proposition \ref{th5.2}).  By using this we conclude that (Theorems \ref{th5.4} and \ref{th6.1}).

\medskip

 \begin{theorem}\label{th1.2}  For the eyebolted Vicsek cross $K$ in Figure \ref{fig5}, the critical exponents are
\begin{equation*}
\sigma^* =\sigma^\# = \frac 12 \Big (1+\frac{\log21}{\log9}\Big ).
\end{equation*}
Moreover,

\vspace {0.1cm}

\ (i) $B^{\sigma^*}_{2, \infty} \ (\subset  C(K))$ \ is dense in $L^2(K, \mu)$, but not dense in $C(K)$;

\vspace{0.1cm}

(ii) there are two kinds of  (non-primal) local regular Dirichlet forms that can be constructed on  $K$, one satisfies the energy self-similar identity; the other follows from a ``reverse recursive construction".  Their domains are different from
$B^{\sigma^*}_{2, \infty}$.
\end{theorem}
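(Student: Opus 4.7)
The plan is to combine the trace estimate from Proposition~\ref{th5.2} with the general criteria for the two critical exponents developed in Section~4, and then use the discretization of the Besov semi-norm in Proposition~\ref{th1.1} to analyse denseness. First, I would extract from Proposition~\ref{th5.2} the fact that, although the multiplicative constants implicit in $R_n(p,q) \asymp 9^n$ depend on the pair $p,q \in V_0$, the common exponential rate is $9^n$ for every boundary pair. By Theorem~\ref{th4.1}, $\sigma^*$ is governed by the slowest growing $R_n(p,q)$, and by Theorem~\ref{th4.3}, $\sigma^\#$ by the fastest; since both reduce to $9^n$, the two exponents coincide. To pin down the common value, note that the renormalization factor satisfies $r^{-n}\asymp 9^n$, hence $r = 1/9 = \rho$; substituting into the identity $r = \rho^{2\sigma^*-\alpha}$ recalled after \eqref{eq1.4} yields $2\sigma^* - \alpha = 1$, and with $\alpha = \log 21/\log 9$ this gives $\sigma^* = \tfrac{1}{2}\bigl(1 + \log 21/\log 9\bigr)$.

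For part~(i), density in $L^2(K,\mu)$ will follow from the quotient-network construction of Section~3 together with Proposition~\ref{th4.4}: enough functions in $B^{\sigma^*}_{2,\infty}$ are produced by harmonic extension of boundary data through the subnetwork realising the minimal resistance growth. The more delicate statement is the \emph{failure} of density in $C(K)$. The idea is that because certain pairs of boundary points of subcells are joined by resistance sequences growing (up to a constant) at the same rate $9^n$ but through subsystems carrying the fast-growth trace, the finiteness of $\sup_j 9^{j}E_j[u]$ forces every $u \in B^{\sigma^*}_{2,\infty}$ to identify values across certain equivalent classes arising from the quotient network (Theorem~\ref{th3.2}). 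I would then exhibit a continuous function on $K$ that separates two such classes; by construction it cannot lie in the uniform closure of $B^{\sigma^*}_{2,\infty}\cap C(K)$.

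For part~(ii), I propose to construct the first Dirichlet form as the limit of a non-primal self-similar sequence of energies obtained by solving the resistance fixed-point problem on $V_0$; the structure theorem for quotient networks (Theorem~\ref{th3.2}) together with the localized renormalization criterion (Lemma~\ref{th3.4}) provides a non-degenerate solution, hence a form $\mathcal{E}$ satisfying $\mathcal{E}[u] = r^{-1} \sum_i \mathcal{E}[u \circ F_i]$. The second form is produced by a ``reverse recursive" scheme: one fixes a consistent energy on the fastest-growing subsystem (the eyebolts) and then propagates it upward through the cell hierarchy by renormalizing on each level according to its own local growth rate. The natural domains of both forms are characterised by convergence of suitably weighted discrete energies; that these domains differ from $B^{\sigma^*}_{2,\infty}$ follows because the non-primal weights penalise the slow and fast pairs differently from the uniform weight $\rho^{-(2\sigma^*-\alpha)j}$ appearing in \eqref{eq1.4}.

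The main technical obstacle will be the failure of denseness in $C(K)$: the combinatorial description of the equivalent classes on the eyebolted Vicsek cross, together with the bookkeeping of resistance asymptotics across those classes, must be sharp enough to produce a genuine continuous separator and not merely an $L^2$-obstruction. A secondary difficulty lies in confirming that the reverse-recursive construction yields an honest local regular Dirichlet form whose domain is genuinely distinct from that of the self-similar form; here I expect to lean on the quotient-network framework of Section~3 to disentangle the contributions of the eyebolts from those of the underlying Vicsek cross.
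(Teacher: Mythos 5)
Your outline goes wrong at the very first step, and the error propagates into the heart of part (i). You assert that $R_n(p,q)\asymp 9^n$ for \emph{every} pair $p,q\in V_0$, with only the implicit constants depending on the pair. Proposition \ref{th5.2} says something genuinely different: $R_n(p_i,p_{i+1})\asymp 9^n$, but $R_n(p_2,p_4)\asymp b_n$ with $b_n/9^n\to 0$ (while $b_n/(9-\varepsilon)^n\to\infty$ for every $\varepsilon>0$), and $R_n(p_1,p_3)\asymp 9^{2n}/b_n$. So the traces all share the \emph{asymptotic geometric growth rate} $9$ (which is what gives $R^*=R^\#=9$ and hence $\sigma^*=\sigma^\#$ via Theorems \ref{th4.1} and \ref{th4.3}), but they are \emph{not} all comparable to $9^n$. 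This distinction is exactly the point of the example: the failure of density of $B^{\sigma^*}_{2,\infty}$ in $C(K)$ is obtained from Theorem \ref{th4.1}(ii), because $R^{*n}/R_n(p_2,p_4)\asymp 9^n/b_n\to\infty$ forces $u(p_2)=u(p_4)$ for every $u\in B^{\sigma^*}_{2,\infty}$ (and, iterating, constancy along segments parallel to $\overline{p_2p_4}$), so a continuous separator of $p_2$ and $p_4$ cannot be approximated. Under your premise that $R_n(p_2,p_4)\asymp 9^n$, the ratio $R^{*n}/R_n(p_2,p_4)$ stays bounded, the trace identity \eqref{eq4.2} identifies nothing, and in fact Theorem \ref{th4.1}(i) (after adjusting constants) would point toward density in $C(K)$ --- the opposite of what you must prove. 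Your appeal to ``equivalent classes arising from the quotient network'' as the obstruction is also misplaced: in the paper the quotient (identifying $p_2\sim p_4$, Lemma \ref{th5.3}) is used for the \emph{positive} statements, namely verifying \eqref{eq4.9} for $\sigma^\#$ and the hypotheses of Proposition \ref{th4.4} for density in $L^2(K,\mu)$; the obstruction in $C(K)$ comes directly from the inhomogeneity $b_n=o(9^n)$, which your proposal has erased.

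Part (ii) also has gaps. A self-similar energy with a \emph{single} renormalization factor $r$ of the kind you posit does not exist on this set (if it did, the associated Besov space argument would contradict (i)); the paper's first construction solves the fixed-point equation $\mathbf{\Phi}_{\tau',\tau''}(\xi)=\xi$ with two distinct factors, $\tau'=1/9$ on the nine cells along $\overline{p_1p_3}$ and $\tau''=16/135$ on the twelve eyebolt cells, and neither Theorem \ref{th3.2} nor Lemma \ref{th3.4} plays any role there. The reverse recursive construction fixes the data on $V_0'$ and pulls conductances back by $\mathbf{y}_n=\mathbf{\Phi}^{-n}(\mathbf{y}_0)$, producing a compatible increasing sequence of energies, rather than ``fixing an energy on the eyebolts and propagating upward.'' Finally, the claim that both domains differ from $B^{\sigma^*}_{2,\infty}$ does not need any comparison of weights: once (i) is established, $B^{\sigma^*}_{2,\infty}$ is not dense in $C(K)$ and therefore cannot be the domain of any \emph{regular} Dirichlet form; your weight-penalization argument is both unnecessary and, as sketched, not a proof. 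You would also still owe the verification (done in the paper via the harmonic function with boundary data $(0,1,0,0)$ and the estimate $9^n a_n/b_n\to\infty$) that the reverse recursive form fails the self-similar identity, so that the two constructions are genuinely of different kinds.
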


\medskip

From (i) and Proposition \ref{th1.1}, we see that we can not have a {\it regular} (sufficiently many continuous functions) Dirichlet form from the renormalized limit of the primal energy and  has $B^{\sigma^*}_{2, \infty}$ as its domain. On the other hand, in (ii), we can use different conductances to obtain  energy forms that yield local regular Dirichlet forms on $K$. One construction gives an energy form that satisfies the {\it energy self-similar identity} \cite {K};   it  provides a concrete constructive proof to implement the abstract proof (fixed point theory) for the existence of such energy form on  asymmetric p.c.f. sets \cite {L,Me,Sa,HMT, Pe}. The other construction,  we call it {\it reverse recursive method}, is to fix an initial data at $V_0$, and iterate this to $V_n$ to obtain a sequence of compatible networks. This method  first appeared in a probabilistic study by Hattori, Hattori and Watanabe \cite {HHW} on the Sierpinski gasket $K$ ($abc$-gasket),  they showed that there is an asymptotically one-dimensional diffusion process on $K$. Some further development and extensions can be found in \cite {HJ, HK, HK1, HY} by Hambly {\it et al}, and in \cite {GLQ} by the authors.

\medskip

 We call the second example a {\it Sierpinski sickle}. It is a connected p.c.f. set $K$ generated by an IFS of $17$ similitudes of contraction ration $1/7$ (see Figure \ref{fig7});   the boundary $V_0$ has three points. By using the  $\Delta$-Y transform, we show that the  traces  $R_n(p,q), \ p, q \in V_0$ are comparable to $7^n$ and $(17/2)^n$ (Proposition \ref{th5.7}). By using this, we conclude that (Theorems \ref{th5.8}, and \ref{th6.2}),

\medskip

\begin{theorem} \label {th1.3} For the Sierpinski sickle (Figure \ref{fig7}), we have
\begin{align*}
\sigma^* = \frac 12 \Big (1+\frac{\log17}{\log7}\Big ), \qquad
\sigma^\# =\frac 12 \Big (\frac{2\log17-\log2}{\log7}\Big )\ .
\end{align*}
Moreover,

\vspace {0.1cm}
\ (i) $B^{\sigma^*}_{2, \infty} \ (\subset C(K)) $  is dense in $C(K)$, and $B^{\sigma^\#}_{2, \infty}$ is dense in $L^2(K, \mu)$.

\vspace {0.1cm}
(ii) there are (non-primal) Dirichlet forms on $K$ that satisfy the energy self-similar identity; but the reverse recursive method does not yield a Dirichlet form on $K$.
\end{theorem}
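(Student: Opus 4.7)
The plan is to reduce the theorem to two kinds of inputs: (a) the precise asymptotic behavior of the traces $R_n(p,q)$ on the Sierpinski sickle, which will come from a $\Delta$--Y analysis in Proposition \ref{th5.7} (asserting that the three traces $R_n(p,q)$, $p,q\in V_0$, are comparable to $7^n$ or to $(17/2)^n$), and (b) the general machinery for the two critical exponents via the quotient network technique (Theorems \ref{th4.1} and \ref{th4.3}, Propositions \ref{th4.2} and \ref{th4.4}). Since the IFS consists of $17$ similitudes of ratio $\rho=1/7$, the Hausdorff/Ahlfors dimension is $\alpha=\log 17/\log 7$, and the discrete Besov characterization \eqref{eq1.4} reads $[u]_{B^\sigma_{2,\infty}}^2\asymp\sup_j 7^{(2\sigma-\alpha)j}E_j[u]$.

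To identify $\sigma^*$, Theorem \ref{th4.1} says it is governed by the \emph{minimum} growth rate among $\{R_n(p,q)\}$. From Proposition \ref{th5.7} this minimum behaves like $7^n$, so the renormalizing factor is $r=7$, and $r=\rho^{-(2\sigma^*-\alpha)}$ gives $2\sigma^*-\alpha=1$, i.e.\ $\sigma^*=\tfrac12(1+\log 17/\log 7)$. For $\sigma^\#$, Theorem \ref{th4.3} says it is controlled by the \emph{maximum} growth rate, which equals $(17/2)^n$; solving $7^{2\sigma^\#-\alpha}=17/2$ yields $\sigma^\#=\tfrac12(2\log 17-\log 2)/\log 7$. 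The main calculational labor here lies entirely in Proposition \ref{th5.7}; once its two growth rates are in hand, the values of $\sigma^*$ and $\sigma^\#$ are algebraic corollaries.

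For part (i), the density of $B^{\sigma^*}_{2,\infty}$ in $C(K)$ follows from the criteria in Theorem \ref{th4.1}(i)--(ii) (together with Proposition \ref{th4.2}): since the minimum-growth vertex pair still has $R_n$ bounded away from zero on a set of $V_0$ pairs that connects all boundary points, a minimal-energy harmonic extension of any boundary datum exists and extends continuously to $K$, so boundary values can be prescribed arbitrarily, which (via the p.c.f. cell structure and iteration) gives a dense subalgebra of $C(K)$. For the density of $B^{\sigma^\#}_{2,\infty}$ in $L^2(K,\mu)$ we invoke Proposition \ref{th4.4}: one locates a quotient network $\widetilde G_n$ on which the largest-growth-rate equivalence class supports non-constant functions $u$ with $[u]_{B^{\sigma^\#}_{2,\infty}}<\infty$; because the asymmetry of the sickle produces a non-trivial equivalence decomposition (three boundary points split into subsets shorted at the maximum rate), these functions, together with their images under the similitudes $F_\omega$, have linear span dense in $L^2(K,\mu)$. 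The subtle point here is verifying that the maximum-growth equivalence class is large enough to produce an $L^2$-total family; this is where the concrete geometry of the sickle (and not just the abstract growth rates) enters.

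For part (ii), the existence of a local regular Dirichlet form satisfying the energy self-similar identity is handled by specifying non-uniform conductances on the first-level cells and invoking the fixed-point construction (as reviewed in the introduction, cf.\ \cite{L,Me,Sa,HMT,Pe}); in practice one parametrizes the conductances by symmetry into a small number of unknowns, writes out the Schur-complement (trace) on $V_0$, and solves the resulting fixed-point equation — existence being guaranteed by the standard degree/fixed-point argument and uniqueness (up to scaling) being verifiable by hand. The harder and more novel assertion is that the \emph{reverse recursive method} does not yield a Dirichlet form: the plan is to set up the recursion at $V_0$ and propagate it to $V_n$, then show that the induced conductances on at least one cell degenerate (tend to $0$ or $\infty$) as $n\to\infty$, so the limiting form fails to be closed or to be a Dirichlet form on $K$. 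The main obstacle of the entire proof, and the one demanding the most care, is precisely this non-existence argument: one must track the reverse recursion through the asymmetric cells and show an unavoidable blow-up of conductance ratios, which contrasts sharply with the eyebolted Vicsek situation of Theorem \ref{th1.2}(ii) where the recursion succeeds.
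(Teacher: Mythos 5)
Your skeleton matches the paper's (Proposition \ref{th5.7} for the trace asymptotics, Theorems \ref{th4.1} and \ref{th4.3} for the exponents, Propositions \ref{th4.2} and \ref{th4.4} for density, explicit renormalization weights for the self-similar form, and the dynamics of the trace map for the failure of the reverse recursion), but the step where you get density of $B^{\sigma^*}_{2,\infty}$ in $C(K)$ has a genuine gap. You appeal to "the criteria in Theorem \ref{th4.1}(i)--(ii)" and a minimal-energy harmonic extension of arbitrary boundary data; this is exactly the argument that is unavailable here, because Proposition \ref{th5.7} gives only $R_n(p_3,p_1)\asymp 7^n$, not the pointwise inequality $7^n\le R_n(p_3,p_1)$ that Theorem \ref{th4.1}(i) requires. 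The paper points out that the sickle is precisely the borderline case $R^{*n}\asymp R_n(p,q)$ not covered by Theorem \ref{th4.1}, and the entire burden falls on Proposition \ref{th4.2}, whose hypotheses must be verified concretely: the compatible relation $V_0=\{p_1,p_3\}\cup\{p_2\}$ has property (B); the closure of the boundary class is the segment $\overline{p_1p_3}$ with $\dim_H=1$ and ${\mathcal H}^1<\infty$, so the relaxed (borderline) form of Lemma \ref{th3.4} with $R^*=7=\rho^{-(2-1)}$ yields an extension with $\sup_n 7^n E_{J,n}(u)<\infty$; and the quotient trace satisfies $R^\sim(J_1,J_2)=\tfrac{17}{2}>7=R^*$. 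None of this verification is in your sketch, and the heuristic you give in its place would not supply it. Similarly, for $\sigma^\#$ the "maximum growth rate" gives only the upper bound (via the chain definition of $R^\#$, which happens to coincide with the maximum here); the matching lower bound needs a compatible relation satisfying \eqref{eq4.9}, which the paper gets by computing $R^\sim_n(J_1,J_2)=\tfrac12\bigl(\tfrac{17}{2}\bigr)^n$ in \eqref{eq5.9}, and the $L^2$-density needs the boundary-class dimension condition \eqref{eq4.11} (here $\max=1<\alpha=\log 17/\log 7$) together with \eqref{eq4.12}; your sketch gestures at Proposition \ref{th4.4} but omits these checks.

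On part (ii), the existence half is fine (the paper in fact solves the fixed-point equation explicitly with three weights $\tau_L,\tau_R,\tau_T$ and the ansatz $a=b=kc$, rather than invoking an abstract fixed-point theorem, but either route works). For the non-existence half, your framing -- run the reverse recursion and show the limiting form fails to be closed or Markovian -- presupposes that the backward sequence of resistances exists, which is precisely what fails. The paper's obstruction, proved via Lemma \ref{th5.6}, is that the normalized forward iterates of the trace map ${\mathbf \Phi}$ converge to $(0,1,0)$ for every nonnegative input, so there is no nonnegative sequence $\{{\mathbf y}_n\}$ with ${\mathbf \Phi}({\mathbf y}_n)={\mathbf y}_{n-1}$ for all $n$ unless ${\mathbf y}_0=(0,1,0)$, and that direction only produces a degenerate form. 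Your intended "blow-up of conductance ratios" is the right computation, but it must be organized as nonexistence of the nonnegative backward orbit rather than as degeneracy of a constructed limit.
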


\bigskip
We remark that not all asymmetric p.c.f. set $K$ will give inhomogeneous rate on the $R_n(p,q)$'s. In fact, in the above two examples, the construction is quite delicate; if we make  small variances on the IFS, then the growth rate of the $R_n(p,q)$'s on $K$ will have the same power (as in the Theorem \ref{th1.2}), and there are Dirichlet forms with energy self-similar identities, and have  $B^{\sigma^*}_{2, \infty}$ as domain.
We will discuss this in the remark section.

\bigskip

For the organization of the paper, in Section 2, we recall some basic definitions, and make some comments of Proposition \ref{th1.1} on the discretization of the Besov norm.  We also introduce the notion of the trace (induced resistance) and the  $\Delta$-Y transform. In Section 3, we introduce the compatible equivalent relations on the network induced by the primal energy, and study the structure of the quotient network. In Section 4, we make use of this to prove some theoretical results for the two critical exponents.  We present the two examples in Section 5,  and  prove the first part of Theorems \ref{th1.2} and \ref{th1.3}. The last part of the two theorems on the construction of Dirichlet forms are given in Section 6. In Section 7, we give some remarks of the two examples, and discuss briefly on another related Besov space ${B}^{\sigma}_{2,2}$ of the non-local Dirichlet forms. We also give an Appendix of the directed graph self-similar sets that is associated with the quotient networks in Section 3.

\bigskip

\section{\large \bf Preliminaries}

\bigskip

We first recall the  definition of a Dirichlet form. Let $(M, d)$ be a locally compact, separable metric space, and let $\nu$ be a Radon measure on $M$ with supp$(\nu) = M$; the triple $(M, d, \nu)$ is called a {\it metric measure space}. Let $C_0(M)$ denote the space of continuous functions  with compact support.
\medskip

\begin{definition}\label{de2.1}
 On  $(M, d, \nu)$,
 a {\rm  Dirichlet form}  $\mathcal E$ with domain ${\mathcal F}$ is a symmetric bilinear form  which is non-negative definite, closed, densely defined  on $L^2(M, \nu)$, and satisfies the Markovian property: $u\in {\mathcal F} \Rightarrow \tilde u := (u\vee 0)\wedge 1 \in \mathcal F$ and ${\mathcal E} [\tilde u] \leq {\mathcal E} [u]$. (Here  $\mathcal E [u] := {\mathcal E}(u, u)$ denote the {\rm energy} of $u$.)

\vspace{0.1cm}
A Dirichlet form is called {\rm regular} if ${\mathcal F}\cap C_0(M)$ is dense in $C_0(M)$ with the supremum norm, and dense in ${\mathcal F}$ with the ${\mathcal E}^{1/2}_1$-norm.  It is called {\rm local} if ${\mathcal E}(u, \upsilon) =0$ for $u, \upsilon \in {\mathcal F}$  having  disjoint compact  supports.
\end{definition}
\medskip

\medskip
The importance of a local regular Dirichlet form is that it  induces a {\it Laplacian} on $M$. However it is a non-trivial matter to construct or to prove the existence of such form. In fact there are only limited class of self-similar sets on which the existence of Laplacians is known. Throughout we will consider the specific class of p.c.f. self-similar sets \cite {K}, which is defined in the following.  Unless otherwise specify, we will assume $M =K $ as a compact subset in ${\Bbb R}^d$ and $d$ is the Euclidean metric.

\bigskip

Let $\{F_i\}_{i=1}^N$  be an IFS on ${\Bbb R}^d$ such that
\begin{equation}\label{eq2.1}
F_i(x) = \rho(x -b_i) + b_i, \quad 1\leq i \leq N,
\end{equation}
where $0<\rho <1$ and $b_i \in {\Bbb R}^d$.   Let  $K= \bigcup_{i=1}^N F_i (K)$ be the self-similar set, and let $\mu$ be the self-similar measure defined by $\mu = \frac 1N\sum_{i=1}^N \mu \circ F_i^{-1}$.  If the IFS  satisfies the open set condition (OSC), i.e., there is a nonempty bounded open set $O$ such that $F_i (O) \subset O$ and $F_i(O) \cap F_j(O) = \emptyset$ for $i\not = j$, then the Hausdorff dimension of $K$ is ${\rm dim}_H (K) = \alpha = \frac {\log N}{|\log\rho|}$, and $\mu$ is the $\alpha$-Hausdorff measure normalized on $K$, it is $\alpha$-regular
in the sense of \eqref{eq1.1}. Without loss of generality, we always assume that $K$ is connected.

\medskip

We define the symbolic space of $K$ as usual. Let $\Sigma= \{1, \cdots N\}$ be the alphabets, $\Sigma^n$ the set of words of length $n$,  and $\Sigma^\infty$  the set of infinite words $\omega = \omega_1 \omega_2 \cdots$;  let  $\pi : \Sigma^\infty \to K$  be defined by
$ \{x\} =\{\pi(\omega)\}=\bigcap_{n\geq1}K_{\omega_1\cdots \omega_n}$, a symbolic representation of $x\in K $ by $\omega$.

\medskip

Following Kigami \cite{K},  we define the
\emph{critical set} $ \mathcal{C} $ and the \emph{post-critical set } $\mathcal{P}$ for $K$  by
$$
\mathcal{C}=\pi^{-1}\Big({\bigcup}_{1 \leq i < j \leq N }\big( K_i \cap K_j\big )\Big),\quad \mathcal{P}={\bigcup}_{m\geq 1}\tau^m(\mathcal{C}),
$$
where $K_i = F_i(K)$, $\tau :\Sigma^\infty \to \Sigma^\infty$ is the left shift by one index. If  $\mathcal{P}$ is a finite set, we call $\{F_i\}_{i=1}^N$ a {\it post-critically finite} (p.c.f.) IFS, and $K$ is a p.c.f. self-similar set.  The  {\it boundary}  of $K$ is defined to be  $V_0= \pi(\mathcal{P})$. (We always assume  $\# (V_0) \geq 2$ to avoid triviality.) We also define
$$
V_n=\bigcup_{i \in \{1,\ldots,N\}}F_i(V_{n-1}),\quad V_*=\bigcup_{n\geq 1}V_n.
$$
It is clear that $\{V_n\}_{n=0}^\infty$ is an increasing sequence of sets, and $K$ is the closure of $V_*$. We call $V_\omega :=F_\omega(V_0)$  a \emph{cell} of $V_n$ for any $\omega \in \Sigma^n$, where $F_\omega =F_{\omega_1}\circ \cdots \circ F_{\omega_n}$.

\medskip

It is known that a p.c.f. IFS in \eqref{eq2.1}  satisfies the open set condition \cite{DL} (More generally, this is true if the associate similar matrices $A_i$ of $F_i$ (instead of the $\rho$ in \eqref{eq2.1}) are commensurable i.e., there exists $A$ such that $A_i = A^{n_i}$; but it is not true without this assumption \cite {TKV}.) Hence the p.c.f. self-similar set $K$ has dimension $\alpha$, and is associated with a self-similar measure $\mu$ that is $\alpha$-regular.

\bigskip

For a Besov space $B^\sigma_{2, \infty}$ on  a compact set $K$ with an $\alpha$-regular measure, we recall a continuity property of its functions (\cite{GHL}, over there the following proposition is put under the assumption that a heat kernel exists, but it was not used in the proof).

\medskip

\begin {proposition} \label{th2.2}  For $2\sigma > \alpha$, then the identity map $\iota :
B_{2,\infty}^\sigma \to  C^{(2\sigma-\alpha)/2} (K)$ is a continuous embedding. (Here $C^\beta(K)$ denotes the class of Lipschitz functions on $K$.)
\end {proposition}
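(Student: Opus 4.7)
The plan is to show that each $u \in B^\sigma_{2,\infty}$ agrees $\mu$-a.e.\ with a $\frac{2\sigma-\alpha}{2}$-H\"older continuous function, with H\"older seminorm controlled by $\|u\|_{B^\sigma_{2,\infty}}$. The main device is a Lebesgue-differentiation/Campanato-type argument: I will control the oscillation of the spherical averages of $u$ at dyadic scales using the Besov seminorm, then telescope and compare averages at two points.

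First, I unfold the Besov seminorm on small concentric balls. Writing $\fint_{B(x,r)}$ for the $\mu$-average, I observe that for $z,w \in B(x,r)$ one has $w \in B(z,2r)$, so restricting the outer integral in \eqref{eq1.2} to $B(x,r)$ and using $B(x,r) \subset B(z,2r)$ inside gives
\begin{equation*}
\int_{B(x,r)}\!\int_{B(x,r)} |u(z)-u(w)|^2 \, d\mu(w)\, d\mu(z) \;\leq\; \int_{B(x,r)}\!\int_{B(z,2r)} |u(z)-u(w)|^2 \, d\mu(w)\, d\mu(z) \;\leq\; C\, [u]_{B^\sigma_{2,\infty}}^2\, r^{\alpha+2\sigma}.
\end{equation*}
Combined with the $\alpha$-regularity $\mu(B(x,r)) \asymp r^\alpha$ and the Cauchy--Schwarz identity
$$
\bigl| u_{B(x,r)} - u_{B(x,r/2)}\bigr|^2 \leq \fint_{B(x,r/2)} \fint_{B(x,r)} |u(z)-u(w)|^2 \, d\mu(w)\, d\mu(z),
$$
this yields the crucial one-scale estimate
$$
\bigl|u_{B(x,r)} - u_{B(x,r/2)}\bigr| \;\leq\; C\,[u]_{B^\sigma_{2,\infty}}\, r^{(2\sigma-\alpha)/2}.
$$
Because $2\sigma - \alpha > 0$, the geometric series $\sum_{k\ge 0} (2^{-k}r)^{(2\sigma-\alpha)/2}$ converges, so telescoping along $r_k = 2^{-k}r$ shows $\{u_{B(x,r_k)}\}_k$ is Cauchy, defining $\tilde u(x) := \lim_{r\to 0^+} u_{B(x,r)}$ everywhere, with
$$
\bigl|\tilde u(x) - u_{B(x,r)}\bigr| \;\leq\; C\,[u]_{B^\sigma_{2,\infty}}\, r^{(2\sigma-\alpha)/2}.
$$
By Lebesgue differentiation, $\tilde u = u$ $\mu$-a.e.

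Next I upgrade this to a H\"older estimate between two points $x,y \in K$ with $r := d(x,y) < 1$. By the triangle inequality,
$$
|\tilde u(x) - \tilde u(y)| \;\leq\; |\tilde u(x) - u_{B(x,2r)}| + |u_{B(x,2r)} - u_{B(y,2r)}| + |u_{B(y,2r)} - \tilde u(y)|,
$$
and the outer terms are already $\leq C[u]\, r^{(2\sigma-\alpha)/2}$. For the middle term, since $B(x,2r)\cup B(y,2r) \subset B(x,4r)$, I bound
$$
|u_{B(x,2r)} - u_{B(y,2r)}|^2 \;\leq\; \frac{C}{\mu(B(x,r))^2}\int_{B(x,4r)}\!\int_{B(x,4r)} |u(z)-u(w)|^2 d\mu(w)\, d\mu(z),
$$
and rerun the first step at scale $4r$ to obtain the same $r^{(2\sigma-\alpha)/2}$ bound. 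This gives the desired H\"older estimate for $\tilde u$, so $\iota : u \mapsto \tilde u$ maps $B^\sigma_{2,\infty}$ continuously into $C^{(2\sigma-\alpha)/2}(K)$.

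The only mildly delicate point I anticipate is the very first step: extracting an $\int\!\int$-bound on a small ball from the $\int_K\!\int_{B(x,r)}$-bound furnished by \eqref{eq1.2}. Once that localization is in place, the rest of the argument is a routine Campanato chaining, and the two ingredients---the $\alpha$-regularity of $\mu$ and the hypothesis $2\sigma > \alpha$---enter precisely to power the geometric series and give a positive H\"older exponent.
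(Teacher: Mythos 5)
Your proposal is correct and is essentially the standard argument: the paper does not prove Proposition \ref{th2.2} itself but quotes it from \cite{GHL}, where the proof is precisely this Campanato-type scheme of localizing the seminorm to concentric balls, bounding $|u_{B(x,r)}-u_{B(x,r/2)}|$ by $C[u]_{B^\sigma_{2,\infty}}r^{(2\sigma-\alpha)/2}$ via Jensen's inequality and $\alpha$-regularity, telescoping over dyadic scales, and chaining two points through a common ball. The only points you should add for completeness are routine: the sup-norm bound $\|\tilde u\|_\infty \leq C(\|u\|_2+[u]_{B^\sigma_{2,\infty}})$ coming from averaging over a ball of fixed radius, and the remark that pairs $x,y$ with $|x-y|$ comparable to $\mathrm{diam}(K)$ are handled by this sup-norm bound, since the one-scale estimate uses radii below $1$.
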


\medskip
The discretized version of a Besov space in Proposition \ref{th1.1} was first established by  Jonsson \cite {J} on the Sierpinski gasket, and  he showed that  the critical exponent  $\sigma^* =\frac{ \log 5}{2 \log 2}$.  He also introduced the notion of {\it regular triangular system} (RTS) on the $d$-sets ($d$ is $\alpha$ here) to study the piecewise linear bases of the Besov space generated by this system \cite {J1}. In \cite {Bo}, Bodin extended Jonsson's discretization theorem to the Besov spaces $B^\sigma_{p,q}, 1\leq p,q \leq \infty$, for a $d$-set that admits a RTS.  He  stated without proof that similar to the RTS case,  the discretization is also true for p.c.f. sets. Actually there are  technical steps that need to be justified, and they are provided in  \cite {GL}.  For our purpose, we will need Proposition \ref{th1.1} in a slightly more general form.

\bigskip

\begin{corollary}\label{th2.3} With the p.c.f. self-similar set defined  as above,  then  for  $2\sigma>\alpha$ and for any integer $\ell >0$,
\begin{equation} \label {eq2.2}
[u]^2_{B^\sigma_{2,\infty}}\asymp
\sup_{j\geq 0}\Big \{ \rho^{-(2\sigma-\alpha)
\ell j}\sum_{x,y\in F_\omega (V_0);\ |\omega| =\ell j}|u(x)-u(y)|^2\Big\}.
\end{equation}
\end{corollary}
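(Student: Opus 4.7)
The direction
\[
\sup_{j\geq 0}\Big\{\rho^{-(2\sigma-\alpha)\ell j}\sum_{x,y\in F_\omega(V_0);\,|\omega|=\ell j}|u(x)-u(y)|^2\Big\}\lesssim [u]^2_{B^\sigma_{2,\infty}}
\]
is immediate: by Proposition \ref{th1.1}, the right-hand side is comparable to $\sup_{n\geq 0}\rho^{-(2\sigma-\alpha)n}E_n[u]$, and the left-hand side is just the supremum of the same sequence along the subsequence $n=\ell j$. The content of the corollary is thus the reverse estimate, and the plan is to show that for every $n\geq 0$,
\begin{equation}\label{eq:goal}
\rho^{-(2\sigma-\alpha)n}E_n[u]\leq C_\ell\,\rho^{-(2\sigma-\alpha)\ell(j+1)}E_{\ell(j+1)}[u],
\end{equation}
where $j=\lfloor n/\ell\rfloor$, and then to take the supremum in $n$ and invoke Proposition \ref{th1.1} again.

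The main step is a comparison lemma: whenever $n\leq m$,
\begin{equation}\label{eq:comp}
E_n[u]\leq C\,N^{m-n}E_m[u],
\end{equation}
with $C$ depending only on the IFS (through $\#V_0$ and the combinatorial structure of $K$). To prove \eqref{eq:comp}, I would fix a cell $V_\omega=F_\omega(V_0)$ with $|\omega|=n$ and two boundary points $x,y\in V_\omega$. Since $F_\omega(K)$ is a scaled connected copy of $K$ containing exactly $N^{m-n}$ subcells at level $m$, one may use the connectedness of the level-$m$ cell graph inside $F_\omega(K)$ to produce a chain $x=z_0,z_1,\dots,z_L=y$ in $V_m\cap F_\omega(K)$ with each consecutive pair lying in a common level-$m$ cell and with $L\leq c\,N^{m-n}$ (this is a standard fact for p.c.f.\ self-similar sets; the existence of such a bounded-branching chain was already implicit in Proposition \ref{th1.1}). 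Cauchy--Schwarz then gives
\[
|u(x)-u(y)|^2\leq L\sum_{i=0}^{L-1}|u(z_i)-u(z_{i+1})|^2\leq c\,N^{m-n}\sum_{\substack{a,b\in V_{\omega'}\\ \omega'\supset\omega,\,|\omega'|=m}}|u(a)-u(b)|^2.
\]
Summing over the $O(1)$ boundary pairs in $V_\omega$ and then over the $N^n$ cells $V_\omega$ at level $n$, and noting that each level-$m$ cell is contained in a unique level-$n$ cell, we conclude \eqref{eq:comp}.

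To derive \eqref{eq:goal}, apply \eqref{eq:comp} with $m=\ell(j+1)$, so $m-n\leq\ell$ and hence $N^{m-n}\leq N^\ell$. Since $2\sigma>\alpha$ gives $\rho^{-(2\sigma-\alpha)}>1$, we have $\rho^{-(2\sigma-\alpha)n}\leq \rho^{-(2\sigma-\alpha)\ell(j+1)}$, so \eqref{eq:goal} follows with $C_\ell=cN^\ell$. Taking the supremum in $n$ and combining with Proposition \ref{th1.1} completes the proof. The only non-routine ingredient is the chaining lemma used for \eqref{eq:comp}; its verification is standard for p.c.f.\ self-similar sets, and the explicit technicalities are essentially those handled in the proof of Proposition \ref{th1.1} in \cite{GL}.
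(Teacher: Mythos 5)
Your proposal is correct, but it takes a different route from the one the paper intends. The paper states Corollary \ref{th2.3} without proof as a ``slightly more general form'' of Proposition \ref{th1.1}; the intended argument is simply to apply Proposition \ref{th1.1} to the $\ell$-fold iterated system $\{F_\omega\}_{|\omega|=\ell}$, which is again a p.c.f. IFS of the form \eqref{eq2.1} with contraction ratio $\rho^\ell$, the same attractor $K$, the same measure $\mu$ and the same $\alpha$, so that \eqref{eq1.4} for that system is literally \eqref{eq2.2}. You instead keep Proposition \ref{th1.1} at the original level and show directly that the supremum along the arithmetic subsequence $n=\ell j$ controls the full supremum, via the comparison $E_n[u]\leq C\,N^{m-n}E_m[u]$ for $n\leq m$, proved by chaining through the level-$m$ cell graph inside each $n$-cell and Cauchy--Schwarz, together with the monotonicity $\rho^{-(2\sigma-\alpha)n}\leq\rho^{-(2\sigma-\alpha)\ell(j+1)}$. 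This is a legitimate and more self-contained derivation: it only uses the connectedness of $K$ (assumed in the paper) and the p.c.f. fact that distinct $m$-cells meet only at points of $V_m$, and it avoids having to check that the iterated IFS is again p.c.f. with a suitable boundary so that Proposition \ref{th1.1} applies; the price is the extra chaining lemma, where you should take the path in the cell graph to be simple (so that consecutive pairs are charged to distinct $m$-cells and no multiplicity factor creeps in), and the comparability constants now depend on $\ell$ through $N^\ell$, which is harmless since $\ell$ is fixed. By contrast, the iterated-IFS route is a one-line reduction but hides the verification that the hypotheses of Proposition \ref{th1.1} (as proved in \cite{GL}) persist under iteration.
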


\medskip
We will make frequently use of the following proposition to construct functions in $B^{\sigma}_{2, \infty}$\  \cite {GL}.

\medskip
\begin{proposition}\label{th2.4} Assume $2\sigma>\alpha$,
then for any function $u$ on $V_*$ , if $u$ satisfies
$$
{\sup}_{j\geq0}\Big \{ \rho^{-(2\sigma-\alpha)j}{\sum}_{x,y\in V_\omega,\ |\omega|=j} |u(x) -u(y) |^2\Big \} < \infty,
$$
 $u$ can be extended continuously to $\tilde u$ on $K$, and $\tilde u \in B^{\sigma}_{2, \infty}$.
\end{proposition}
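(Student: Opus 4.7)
The plan is to extend $u$ from $V_\ast$ to a continuous function $\tilde u$ on the compact set $K$, and then apply Proposition \ref{th1.1} to conclude $\tilde u \in B^\sigma_{2,\infty}$. Set
\[
M := \sup_{j\ge 0}\rho^{-(2\sigma-\alpha)j}\sum_{x,y\in V_\omega,|\omega|=j}|u(x)-u(y)|^2,
\]
which is finite by hypothesis.

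The first observation is an in-cell oscillation bound. Any pair $x, y$ lying in a common $n$-cell $V_\omega$ contributes $|u(x)-u(y)|^2$ to $E_n[u]$, and hence
\[
|u(x)-u(y)|^2 \le E_n[u] \le M\rho^{(2\sigma-\alpha)n}.
\]
Since $2\sigma>\alpha$ and $0<\rho<1$, the right-hand side tends to zero as $n\to\infty$, i.e.\ the oscillation of $u$ on each $n$-cell goes to zero uniformly in $\omega$.

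The main step is to upgrade this into uniform continuity of $u$ on $V_\ast$. Given $x, y \in V_\ast$, choose $n$ with $\rho^n \asymp |x-y|$ and connect $x$ to $y$ by a chain $x=z_0, z_1, \ldots, z_L=y$ in $V_n$ such that each consecutive pair $(z_{i-1}, z_i)$ lies in a common $n$-cell and $L$ is bounded by a constant $L_0$ independent of $n$. By the in-cell bound and the triangle inequality, this gives the H\"older-type estimate
\[
|u(x)-u(y)| \le L_0\sqrt{M}\,\rho^{(2\sigma-\alpha)n/2} \le C|x-y|^{(2\sigma-\alpha)/2}.
\]
Thus $u$ is uniformly continuous on the dense subset $V_\ast$ of $K$ and extends uniquely to a continuous $\tilde u$ on $K$; being continuous on a compact set, $\tilde u$ is bounded and in $L^2(K,\mu)$.

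Since $\tilde u = u$ on $V_\ast \supset \bigcup_{|\omega|=j}V_\omega$, we have $E_j[\tilde u] = E_j[u]$ for every $j$, so Proposition \ref{th1.1} gives $[\tilde u]^2_{B^\sigma_{2,\infty}} \asymp M < \infty$, i.e.\ $\tilde u \in B^\sigma_{2,\infty}$. The only genuinely nontrivial step is the chaining: one must exploit the p.c.f.\ structure (finiteness of the intersection pattern between cells) together with the connectedness of $K$ to produce a chain of $n$-cells of uniformly bounded length connecting any two points of $V_\ast$ at distance $\lesssim \rho^n$. Controlling this chain length by a constant independent of $n$ is the delicate geometric ingredient of the argument.
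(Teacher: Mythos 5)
The paper itself gives no proof of Proposition \ref{th2.4} (it is quoted from the companion paper \cite{GL}), so your proposal can only be judged on its own merits; the strategy you choose --- decay of in-cell oscillation, chaining, extension by uniform continuity, then Proposition \ref{th1.1} --- is indeed the standard route. The final step is fine: once $\tilde u\in C(K)$ exists, $E_j[\tilde u]=E_j[u]$ and the discretization gives $\tilde u\in B^\sigma_{2,\infty}$.

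The gap is in the chaining step. You take $x,y\in V_*$, pick $n$ with $\rho^n\asymp|x-y|$, and join them by a chain $x=z_0,\dots,z_L=y$ \emph{in $V_n$} whose consecutive members share an $n$-cell. But $V_\omega=F_\omega(V_0)$ consists only of the cell's boundary vertices, so such a chain forces $x,y\in V_n$; a generic point of $V_*$ lies in $V_m\setminus V_n$ for $m\gg n$, and chaining at level $m$ instead would require $L$ of order $\rho^{-(m-n)}$ or worse, destroying the estimate. What is missing is the multi-scale (telescoping) step: one must first bound the oscillation of $u$ over \emph{all} of $V_*\cap K_\omega$, not just over $F_\omega(V_0)$, by connecting a point $x\in V_*\cap K_\omega$ to a boundary vertex of $K_\omega$ through cells of levels $n,n+1,n+2,\dots$ (using connectedness of $K$, the children of a cell can be chained inside it with at most $N$ links), which gives a convergent geometric series $\sum_{k\ge n}C\sqrt{M}\,\rho^{(2\sigma-\alpha)k/2}\le C'\sqrt{M}\,\rho^{(2\sigma-\alpha)n/2}$ precisely because $2\sigma>\alpha$; only then can one chain at the single scale $n$. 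In addition, the assertion that two points at Euclidean distance $\lesssim\rho^n$ can be joined by a chain of boundedly many $n$-cells is exactly the geometric comparability of the Euclidean metric with the cell structure that the paper flags as the nontrivial technical content justified in \cite{GL} (Bodin stated the p.c.f. case without proof); labelling it ``delicate'' is not a proof, and without it the uniform continuity of $u$ on $V_*$ --- hence the existence of the continuous extension --- is not established. So the skeleton is right, but both the telescoping estimate and the bounded-chain property need to be supplied for the argument to close.
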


\medskip

  Let $\ell(V_*)$ denote the class of real-valued functions on $V_*$. For $u \in \ell(V_*)$,  we  define an energy form ${\mathcal E}_n[u]$ on  $V_n \ , \ n\geq 0$,  by
\begin{equation} \label {eq2.3}
{\mathcal E}_n[u] = \sum_{x,y\in F_\omega (V_0),\ |\omega| = n} c_n(x,y)|u(x)-u(y)|^2,
\end{equation}
where $c_n(x,y)$ is the conductance of the nodes $x, y$.  In literature, the most studied approach to construct a Dirichlet form on a p.c.f.  set is to consider the sequence  ${\mathcal E}_{n+1}[u] = \sum_{i=1}^N {\tau_i}^{-1}\ {\mathcal E}_n[u\circ F_i]$,  where  $0<\tau_i<1$ are the renormalization factors. If  ${\mathcal E} [u]= \lim_{n\to \infty} {\mathcal E}_n[u]$ exists for all $u \in \ell (V_*)$, then  ${\mathcal E}$ satisfies the {\it energy self-similar identity}
\begin{equation} \label {eq2.4}
\mathcal E[u] = \sum_{i=1}^N {\tau_i}^{-1}\ {\mathcal E}[u\circ F_i] \ ,  \qquad  u \in {\mathcal F},
\end{equation}
 and defines a local regular Dirichlet form  on $L^2(K,\mu)$ for a given Radon measure $\mu$ fully supported on $K$ \cite{K, S}.  If all the  $\tau_i$ are equal, then   the  Dirichlet form $\mathcal E$  on the metric measure space  $(K, |\cdot |, \mu)$ has domain  ${\mathcal F} = B_{2, \infty}^{\sigma^*}$ ($\mu$ is the normalized $\alpha$-Hausdorff measure on $K$). If the $\tau_i$'s are not all equal,  then we can consider the metric measure space $(K, d_r, \nu)$, where  $d_r$ is the resistance metric on $K$,  and $\nu$ is the self-similar measure with  weights  $\{\tau_i^s\}_{i=1}^N$ where $\sum_{i=1}^N {\tau_i}^s =1$, and the domain ${\mathcal F}$ is a modified Besov space with respect to $(K, d_r, \nu)$ (\cite {K}, \cite {Pi}, \cite {GHL}, \cite {HW}).

\bigskip

Let  $G_n :=(V_n, r_n)$ denote the corresponding electrical network of \eqref{eq2.3} with resistance  $r_n (x,y) = c_n(x,y)^{-1}$, $x, y \in V_n$ as resistance. It is known that \cite [Theorem 2.1.6] {K} for any  $ m<n$, there is an induced network of $G_n$ on $V_m$ with resistance $R_{n, m} (x, y)$ such that for $u \in \ell(V_m)$,
\begin {equation}\label {eq2.4'}
\min \big \{{\mathcal E}_n[\upsilon]:  \upsilon \in \ell (V_n), \ \upsilon|_{V_m} = u\big \} = \sum_{x,y \in V_m} \frac 1{R_{n,m}(x,y)}|u(x) -u(y)|^2.
\end{equation}

\medskip

Let $\{R_n\}_{n=0}^\infty$ be an increasing  sequence of  positive real numbers, suppose there exists $R> 1$ such that  for any $\varepsilon >0$, there exists $N(\varepsilon)$ such that for all  $n\geq N(\varepsilon)$,
\begin{equation*}
 R^{(1-\varepsilon)n}\leq R_n\leq  R^{(1+\varepsilon)n},
\end{equation*}
 then we call $R$ the {\it asymptotic geometric growth rate}  of $R_n$.

 \medskip

\begin{definition} \label {de2.5}  We call $R_{n,m}(x,y), \ x, y \in V_m$ the {\rm trace} (or the induced resistance) of $G_n$ on $V_m$. In particular, for $m=0$, we will use the notation $R_n(p, q), p, q \in V_0$ for simplicity. We also use $R(p, q)$ to denote the asymptotic geometric growth rate of $R_n(p, q)$ if it exists.
\end{definition}

\medskip

A function $h$ on $V_n$ is called {\it harmonic} on a subset $E\subset V_n$ if $h(x) = \sum_{x\sim y} c_n(x, y) h(y)$,  $x  \in E$. In the above, the function $\upsilon \in \ell(V_n)$ that attains the minimum (always exists) is a harmonic function on $V_n \setminus V_m$; we call it a {\it harmonic extension} of $u$ on $V_m$ to $V_n$.  As $\upsilon$ is harmonic on  the ``interior" of each subcell of $V_m$, we see  that  $\upsilon$  is a ``piecewise harmonic" function on $V_n$. These functions will be used to construct continuous functions in $B^\sigma_{2, \infty}$ as in the following.

\medskip

 \begin {proposition} \label {th2.6} For the primal energy $E_n[u], n\geq 0$ as defined in \eqref{eq1.3}, suppose  $\sigma$ satisfies $2\sigma > \alpha$,  and there exists an integer $N\geq1$ such that
\begin{equation} \label{eq2.5}
\rho^{-(2\sigma-\alpha)N}\leq R_N(p,q),  \quad  \forall \ p,q\in V_0, \ p \not = q,
\end{equation}
then $u \in \ell (V_0)$ has an extension to $K$, and consequently, $B^\sigma_{2, \infty}$ is dense in $C(K)$.
\end {proposition}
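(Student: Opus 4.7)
The plan is to construct, for any $u\in\ell(V_0)$, a continuous extension $\tilde u$ on $K$ lying in $B^\sigma_{2,\infty}$ by iterating the minimal $E_N$-energy extension across nested cells of level a multiple of $N$, and then to upgrade this to the density statement by repeating the construction with $V_n$ in place of $V_0$ for $f|_{V_n}$.

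First, let $u_1$ be the minimizer of $E_N$ subject to $u_1|_{V_0}=u$; by the trace identity \eqref{eq2.4'} together with the hypothesis $R_N(p,q)\geq\rho^{-(2\sigma-\alpha)N}$,
\begin{equation*}
E_N[u_1]=\sum_{p\neq q\in V_0}\frac{|u(p)-u(q)|^2}{R_N(p,q)}\leq \rho^{(2\sigma-\alpha)N}E_0[u].
\end{equation*}
Iterate cell-by-cell: on each cell $V_\omega=F_\omega(V_0)$ with $|\omega|=Nk$, extend the currently defined values of $\tilde u$ on $V_\omega$ to $F_\omega(V_N)\subset V_{N(k+1)}$ by the minimal-energy principle on the scaled copy $F_\omega(G_N)$. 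Since the primal energy has unit conductances, this sub-network is isomorphic to $G_N$ and the same trace inequality applies, giving
\begin{equation*}
E_{N(k+1)}[\tilde u]=\sum_{|\omega|=Nk}\sum_{|\omega'|=N}E_0[\tilde u|_{V_{\omega\omega'}}]\leq \rho^{(2\sigma-\alpha)N}E_{Nk}[\tilde u];
\end{equation*}
induction on $k$ then yields $E_{Nk}[\tilde u]\leq \rho^{(2\sigma-\alpha)Nk}E_0[u]$ for every $k\geq 0$. The function $\tilde u$ on $V_*=\bigcup_k V_{Nk}$ is well-defined because minimal-energy extensions at successive levels are compatible under nesting.

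Since the cell sum dominates each single term, the oscillation of $\tilde u$ on any level-$Nk$ cell $V_\omega$ is at most $\rho^{(\sigma-\alpha/2)Nk}E_0[u]^{1/2}$. Using the p.c.f. and connectedness properties of $K$, any two points of $V_*$ at Euclidean distance $\lesssim \rho^{Nk}$ lie in a chain of a uniformly bounded number of adjacent level-$Nk$ cells; telescoping along this chain produces a H\"older estimate $|\tilde u(x)-\tilde u(y)|\leq C|x-y|^{\sigma-\alpha/2}E_0[u]^{1/2}$ on $V_*$. Since $2\sigma>\alpha$ the exponent is strictly positive, so $\tilde u$ extends continuously to all of $K$. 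Combining the energy bound with Corollary \ref{th2.3} applied with $\ell=N$,
\begin{equation*}
[\tilde u]^2_{B^\sigma_{2,\infty}}\asymp \sup_{k\geq 0}\rho^{-(2\sigma-\alpha)Nk}E_{Nk}[\tilde u]\leq E_0[u]<\infty,
\end{equation*}
and hence $\tilde u\in B^\sigma_{2,\infty}\cap C(K)$.

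For the density claim, given $f\in C(K)$ and $n\geq 0$, apply the same construction starting from the discrete data $f|_{V_n}$ on each level-$n$ cell to obtain a function $\tilde f_n\in B^\sigma_{2,\infty}\cap C(K)$ which agrees with $f$ on $V_n$. The network maximum principle applied inside each cell $F_\omega(K)$ of level $n$ forces $\tilde f_n$ to take values between $\min_{V_\omega}f$ and $\max_{V_\omega}f$ there, so for every $x\in K$ one has $|\tilde f_n(x)-f(x)|\leq 2\,\omega_f(C\rho^n)$, where $\omega_f$ denotes the modulus of continuity of $f$. Hence $\tilde f_n\to f$ uniformly on $K$, giving density. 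The main obstacle is the H\"older step: one has to extract from the p.c.f./connectedness structure a uniform upper bound on the length of chains of adjacent level-$Nk$ cells needed to link arbitrary nearby points of $V_*$; once that is in place, the continuous extension to $K$ follows and Corollary \ref{th2.3} immediately delivers the Besov bound.
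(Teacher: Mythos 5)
Your construction is exactly the paper's: use the trace identity \eqref{eq2.4'} with hypothesis \eqref{eq2.5} to get $E_N[u_1]\leq\rho^{(2\sigma-\alpha)N}E_0[u]$, iterate the minimal-energy extension cell-by-cell on scaled copies of $G_N$ to obtain $E_{Nk}[\tilde u]\leq\rho^{(2\sigma-\alpha)Nk}E_0[u]$ for all $k$, and then prove density by running the same extension from $f|_{V_n}$ on each level-$n$ cell and letting $n\to\infty$ (the paper is terser here, but your maximum-principle estimate via the modulus of continuity is the intended justification of the uniform convergence).

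The gap is in how you pass from the discrete energy bound on $V_*$ to a continuous function on $K$ lying in $B^\sigma_{2,\infty}$. You attempt a direct H\"older estimate, and its key input --- that any two points of $V_*$ at Euclidean distance $\lesssim\rho^{Nk}$ can be joined by a chain of a \emph{uniformly bounded} number of adjacent level-$Nk$ cells --- is not a consequence of the p.c.f. property and connectedness. The p.c.f. condition controls actual intersections of cells, not near-intersections: two distinct level-$n$ cells may lie at Euclidean distance $o(\rho^n)$ while every chain of level-$n$ cells joining them has length tending to infinity, so Euclidean closeness does not control cell-graph closeness in general, and your telescoping argument breaks down. You yourself flag this as ``the main obstacle'' but do not resolve it, so as written the continuity of $\tilde u$ (and hence the applicability of Corollary \ref{th2.3} to it) is not established. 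The paper avoids this entirely: it feeds the bound $\sup_k\rho^{-(2\sigma-\alpha)Nk}E_{Nk}[\tilde u]<\infty$ into Corollary \ref{th2.3} and Proposition \ref{th2.4}, the latter being exactly the statement (proved in the cited reference) that a function on $V_*$ with bounded discrete Besov energy extends continuously to $K$ and belongs to $B^\sigma_{2,\infty}$; its proof goes through the Besov--H\"older embedding of Proposition \ref{th2.2} rather than through chains of cells. Replacing your H\"older step by an appeal to Proposition \ref{th2.4} repairs the argument and makes it coincide with the paper's proof.
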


\medskip

\begin {proof} For any  $u \in \ell(V_0)$,   from the trace of $G_N$ on $V_0$, we have
{\small  \begin{align*}
& \min_{v \in \ell (V_N), v|_{V_0}= u}E_N[v]
=  \sum_{ p, q \in V_0} \frac{1}{R_N(p,q)}|u(p)-u(p)|^2,
\end{align*}}
where $E_N[v]$ is defined as in \eqref{eq1.3}. Multiplying $\rho^{-(2\sigma-\alpha)N}$ to both sides, and by \eqref{eq2.5}, we obtain
\begin{equation*}
 \min_{v \in \ell (V_n), v|_{V_0}= u}\Big\{ \rho^{-(2\sigma-\alpha)N} E_N[v]\Big \}
 \leq \sum\limits_{p,q \in V_0}\left|u(p)-u(q)\right|^2.
\end{equation*}
 With no confusion, we use $u$ again to denote the unique function in $\ell (V_N)$ that attains the minimum. By using this $u$ as initial data on each $F_\omega (V_0), |\omega| = N$, and continue this extension procedure to $V_{2N},V_{3N},\cdots$, there is $u$ on $V_* = \bigcup_{n\ge 0}V_n$ such that for all $k\geq1$,
\begin{equation*}
  \rho^{-(2\sigma-\alpha){kN}}E_{kN} [u]
 \leq\sum\limits_{p,q\in V_0}\left|u(p)-u(q)\right|^2.
\end{equation*}
 By Corollary \ref{th2.3} and  Proposition \ref{th2.4}, $u$ can be extended continuously to $K$ and $u\in B^\sigma_{2,\infty}$.

 \vspace {0.1cm}
It follows that for any  $v \in C(K) $,  if we let $v_n$ to be the restriction of $v$ on $V_n$,  we can  extend $v_n$ on each cell $K_{\omega}, |\omega|= n$ so that $v_n \in B^{\sigma}_{2, \infty}$ (this $v_n$ is a piecewise harmonic function). The sequence $\{v_n\}_{n=1}^\infty$ converges to $v$ uniformly.  This shows that $B^{\sigma}_{2, \infty}$ is dense in $C(K)$.
\end{proof}

\bigskip

To evaluate the trace $R_n(p,q)$ and
 estimate the energy functional on a network,  we will use some elementary techniques like the series law and parallel law  of resistance and the $\Delta$-Y transform. Recall the $\Delta$-Y transform \cite {K, S} states that
the $\Delta$-shaped resistors $(R_{12}, R_{23}, R_{31})$ and the $Y$-shaped resistors $(a, b,c)$  in Figure \ref{fig1} in any network are equivalent by the following relation
\begin{equation} \label {eq2.6}
  a =\frac{R_{12}R_{31}}{R}, \quad  b =\frac{R_{12}R_{23}}{R},\quad  c  =\frac{R_{31}R_{23}}{R},
\end{equation}
with $R= R_{12} +R_{23} +R_{31}$, and conversely,
\begin{equation} \label{eq2.7}
R_{12} = \frac rc, \quad R_{23} = \frac ra \quad R_{31} = \frac rb ,
\end{equation}
where $r = ab+ bc+ ca$.
\begin{figure}[h]
\textrm{\centering
\scalebox{0.15}[0.15]{\includegraphics{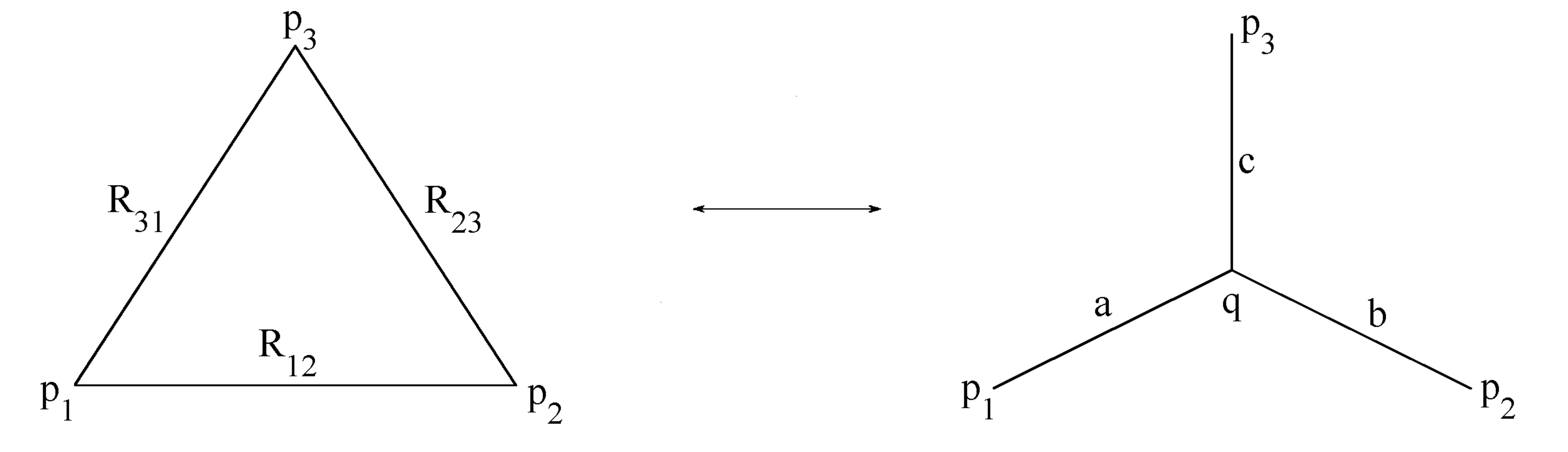}}\newline
}
\caption{$\Delta$-$Y$ transform}
\label{fig1}
\end{figure}

\medskip
In the  example of eyebolted Vicsek cross in Section 5, we need to use an electrical network with four terminals. We  give a version of equivalent electrical networks similar to the  $\Delta$-Y transform, and call it the{\it \begin{large}
${\boxtimes}$\end{large}-X transform}.

\medskip

\begin{lemma}\label{th2.7}
 For the two electrical networks as shown in Figure \ref{fig2} and assume that $yz = x^2$,  then they are equivalent  and the resistances satisfy
$$
 a= \frac {xy}{2(x+y)}\ , \quad \hbox {and} \quad  b = \frac {xz}{2(x+z)}  \ \Big (= \frac {x^2}{2(x+y)}\Big )\ ;
$$
equivalently,
$$
 x = 2(a+b)\ , \quad  y = \frac {2a}b(a + b)\ , \quad \hbox {and} \quad z = \frac {2b}a(a + b)\ .
$$
\end{lemma}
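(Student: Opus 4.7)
The plan is to verify that the two four-terminal networks produce identical voltage-current responses at their terminals. Both configurations carry a symmetry group of order four, generated by swapping the two endpoints of the $y$-diagonal and, independently, the two endpoints of the $z$-diagonal, so their $4 \times 4$ terminal Laplacians share a common decomposition into four one-dimensional eigenspaces. One eigenvalue is automatically zero (the constant mode), and the other three are determined by three pairwise effective resistances computed in appropriate symmetry-adapted configurations. Matching these three quantities will produce precisely the three conditions stated in the lemma.

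First I would compute the two ``diagonal'' pairwise resistances in each network. In the X network, injecting unit current at terminal $1$ and extracting at terminal $3$ leaves terminals $2$ and $4$ floating; since the center is the only internal node and no external current enters at $2$ or $4$, the two $b$-spokes carry no current, and $R_{13}=2a$. In the $\boxtimes$ network, the involution swapping $2$ and $4$ forces equal potentials there, killing the current in the $z$-diagonal; shorting $2$ and $4$ reduces the network between $1$ and $3$ to the diagonal $y$ in parallel with two side paths each of resistance $2x$, which jointly contribute resistance $x$. Hence in $\boxtimes$, $R_{13}=xy/(x+y)$, and matching with $2a$ yields $a=xy/(2(x+y))$. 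The symmetric calculation on the other diagonal gives $b=xz/(2(x+z))$.

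Next I would compute the ``adjacent'' resistance $R_{12}$. In the X network this is immediate, $R_{12}=a+b$, since the spokes into the floating terminals $3$ and $4$ carry no current. In the $\boxtimes$ network no global symmetry simplifies matters; setting $V_1=1$, $V_2=0$ and writing Kirchhoff's current law at the two floating nodes $3$ and $4$ produces a $2\times 2$ linear system determining $V_3$ and $V_4$, from which the input current at terminal $1$ and hence the value of $R_{12}$ in $\boxtimes$ follow. Substituting the expressions for $a$ and $b$ from the previous step, the equation $R_{12}^{\mathrm X}=R_{12}^{\boxtimes}$ collapses after routine algebra to the single relation $yz=x^2$. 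The inversion formulas $x=2(a+b)$, $y=2a(a+b)/b$, $z=2b(a+b)/a$ then follow by eliminating $z=x^2/y$ and solving the first two matching equations for $x$ and $y$ in terms of $a$ and $b$.

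The main obstacle is the $R_{12}$ calculation in the $\boxtimes$ network, which is noticeably more tedious than the diagonal computations. A cleaner alternative bypasses it entirely: apply the classical $n$-star to complete-mesh transformation to the X network, regarded as a four-star with spokes of resistance $a,b,a,b$. This produces a complete graph $K_4$ whose four side edges each have resistance $2(a+b)$ and whose two diagonals have resistances $2a(a+b)/b$ and $2b(a+b)/a$, the product of which is $4(a+b)^2$. Identifying this $K_4$ with the $\boxtimes$ configuration delivers the entire lemma, including $yz=x^2$ as a necessary algebraic identity among the diagonal resistances, in one stroke.
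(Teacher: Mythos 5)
Your proposal is correct, and it is both more complete than and partly different from the paper's own argument. The paper's proof is only an outline of the computation you do first: it observes that in the $\boxtimes$ network no current passes through $\overline{p_2p_4}$ when current flows from $p_1$ to $p_3$ (equivalently, via a $\Delta$-Y reduction), so the square contributes resistance $x$, which in parallel with $y$ gives $\frac{xy}{x+y}$; equating with $2a$ yields the formula for $a$, and $b$ is treated symmetrically. What the paper never spells out is why matching the two diagonal resistances suffices for equivalence of the full four-terminal response, i.e.\ where the hypothesis $yz=x^2$ actually enters. Your proposal supplies exactly this missing bookkeeping: the $\mathbb{Z}_2\times\mathbb{Z}_2$ symmetry gives both terminal Laplacians the common eigenbasis $(1,1,1,1)$, $(1,0,-1,0)$, $(0,1,0,-1)$, $(1,-1,1,-1)$, so equivalence reduces to three scalar matchings, and (given the two diagonal matchings) the condition $R^{X}_{12}=R^{\boxtimes}_{12}$ reduces to $1=\frac{y}{x+y}+\frac{z}{x+z}$, i.e.\ precisely $yz=x^2$ — I checked this, and it is right. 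One small imprecision: the third nonzero eigenvalue (on the mode $(1,-1,1,-1)$) is not itself a pairwise effective resistance, but it is determined by $R_{12}$ together with $R_{13}$ and $R_{24}$, which is what your argument actually uses, so the reduction stands. Your star-mesh alternative is a genuinely different and arguably cleaner route than the paper's: eliminating the center node of the X network (a one-line Schur complement, with mesh conductances $g_ig_j/\sum_k g_k$) produces the complete graph with sides $2(a+b)$ and diagonals $\frac{2a(a+b)}{b}$, $\frac{2b(a+b)}{a}$, and since a network supported entirely on the terminal set is determined by its edge conductances, identifying this with $\boxtimes(x,y,z)$ gives the equivalence, all the conversion formulas, and the necessity of $yz=x^2$ in one stroke; it would be worth stating that last identification explicitly, but it is immediate from the off-diagonal entries of the response matrix. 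In short: the paper's outline buys brevity, your first route buys a transparent accounting of where $yz=x^2$ is used, and your star-mesh route buys the whole lemma with the least computation.
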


\begin{figure}[h]
\textrm{\centering
\scalebox{0.20}[0.20]{\includegraphics{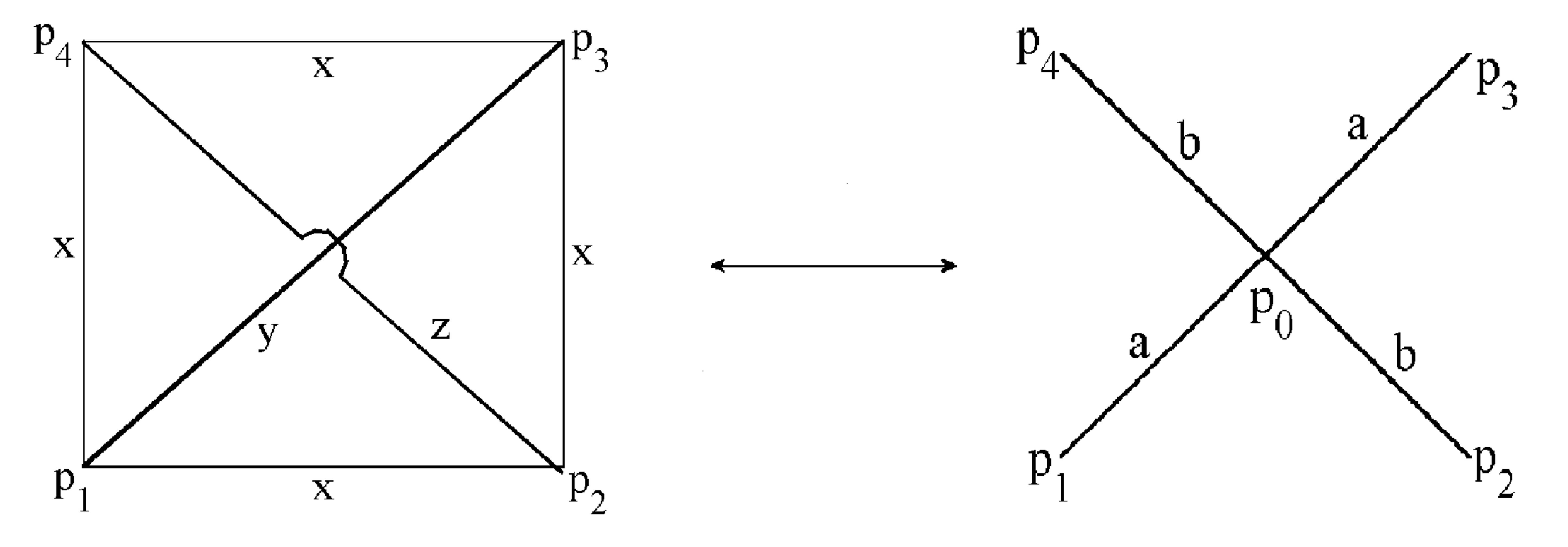}}\newline
}
\caption{equivalent networks with four vertices}
\label{fig2}
\end{figure}

\medskip

\begin {proof} We only outline the proof of the identity for $a$. By using the $\Delta$-Y transform  on the square together with $\overline {p_2p_4}$, it is easy to calculate the effective resistance of $p_1$ and $p_3$ is $x$ (this can also be obtained by observing that no current should pass through $\overline {p_2p_4}$). Then take this in parallel with the resistance $y$ on $\overline {p_1p_3}$, we have the desired expression.
\end{proof}

\bigskip
\section{\large\bf Quotient network}

\bigskip

 In this section, We will set up  the  equivalent relations and quotients on $V_n, n\geq 0$  which was first considered by Sabot \cite {Sa}. It will  be used to study the other critical exponent $\sigma^\#$  in Theorem \ref{th4.3}.

\bigskip

\begin {definition} \label {de3.1}
Let $\sim$ be an equivalence relation on $V_0$ that contains at least two equivalent classes.  We define the induced equivalent relation $\sim_n$ to be  the smallest equivalent relation on $V_n$ generated by

\medskip

\ \ (i) (embedding) for $x \sim_{n-1} y$ in $V_{n-1} (\subset V_n)$, then $x\sim_n y$ in $V_n$;

\medskip

\ (ii) (self-similar) for $x \sim_{n-1} y$ in $V_{n-1}$, then $F_i(x) \sim_n  F_i (y)$ for  $1\leq i\leq N$.

\medskip
\noindent We say that $\sim$ is a \textit{compatible (equivalence) relation}  if for any $n\geq0$ and any $x,y\in V_{n}$, $x\sim_n y$ in $V_n$ if and only if $x\sim_{n+1} y$  in $V_{n+1}$.
\end {definition}

\medskip

We will omit the subscript $n$ when there is no confusion,  and we write $V_0 = \bigcup_i J_i$ where the  $J_i$'s are  equivalent classes of $V_0$, and. Note that (ii)  implies
$$
F_\omega (p) \sim F_{\omega} (q), \quad  if \ \ p\sim q, \ p,q \in V_0;
$$
furthermore  if there are $q'\in V_0$, $|\omega'|=|\omega|$ such that $F_\omega (q) = F_{\omega'}(q')$, then for $p'\in V_0$ and $p'\sim q'$, then $F_\omega (p)\sim F_{\omega'}(p')$.
We will use $V_n^\sim, \ n\geq 0$ to denote the quotient spaces, i.e.,
$$
 V_n^\sim = \big \{ [F_\omega(J)]: \  J\in V_0^\sim, |\omega|=n\big \}.
 $$
 Here $[F_\omega(J)] $ is the union of the $F_{\omega'}(J'), J' \in  V_0^\sim,  |\omega'| \leq n$  where   $F_{\omega_i}(J_i) \cap F_{\omega_{i+1}} (J_{i+1}) \not = \emptyset $ for a finite sequence of cells in $V_m, m\leq n$ with  $F_\omega (J) = F_{\omega_1} (J_1), \cdots , F_{\omega_k} (J_k) = F_{\omega'} (J')$.

 \medskip

  In view of (i), the compatible condition is only imposed on the sufficiency.  It follows that for $m\leq n$,  $V_m^\sim$ can be identified as a subset of $V_n^\sim$.  The compatible relation  therefore induces an equivalence relation on $V_*$ (also denote by $\sim$)  with  $V_*^\sim= \bigcup_n V_n^\sim$ such that each equivalent class $J \in V_*$ is the union of  an increasing sequence of equivalent classes $J^{(n)}\in V_n$. It is easy to show inductively that if $J^{(n)}_1, J^{(n)}_2$ are distinct in $V^\sim_n$, then $J^{(m)}_1, J^{(m)}_2$ are distinct in $V^\sim_m$ for $m\geq n$, so that $J^*_1, J^*_2$ are distinct in $V_*$.

 \bigskip
We call an equivalent class $J$ of $V_n$ (or $V_*$)  a {\it boundary class} if $J\cap V_0\neq\emptyset$, and a {\it non-boundary class} otherwise.

 \bigskip

\noindent {\bf Examples}. For the Sierpinski gasket with $V_0 =\{p_1 , p_2, p_3\}$, the partition   $J_1= \{p_1, p_3\}$, $J_2 =\{p_2\}$ defines a compatible relation. It is easy to see that an element of $V_n^\sim$  is either a single vertex or is consisted of  consecutive vertices  on  a line segments parallel to $\overline {p_1p_3}$ (see Figure \ref{fig4}). There are two boundary classes in $V_*$,  $\{p_2\}$ and the set of dyadic points  on the line segment $\overline {p_1p_3}$.

\vspace {0.1cm}

Consider the pentagasket with $V_0 = \{p_i\}_{i=1}^5$  arranged in the counterclockwise direction. $J_i =\{p_i\}, i =1,2,3, \ J_4 = \{p_4, p_5\}$, then it defines a compatible relation. There are four boundary classes in $V_*$: the three singletons $J_i, i=1,2,3$, and $J^*_4$, which is a Cantor-set on the line segment $\overline {p_4 p_5}$.

On the pentagasket, if we let $J_1 = \{p_1, p_2\}, J_2 =\{ p_3, p_4, p_5\}$, then it is again a compatible relation, but the two boundary classes is more complicated  (see Figure \ref{fig3}),  its structure follows from Theorem \ref{th3.2}.

\bigskip

\noindent {\bf Remark}. In \cite {Sa}, Sabot first made use of the equivalent relation to study the Dirichlet form on a ramified self-similar set with a symmetric group $G$ acting on $V_0$. He defined a $G$-relation on $V_0$ by $x\sim y \Rightarrow gx \sim gy$ where $g \in G$, and extended  this to $V_n$ by rule (ii) and required it to be compatible  (it is called  preserved $G$-relation \cite [Section 4.2.1,  Definition 4.19] {Sa}).  This induce equivalent relation on $V_n$ is different from ours, which is generated by both rules (i) and (ii). The former definition is more limited, as it is easy to check that on the pentagasket, under rule (ii) only, then all non-trivial relation cannot be compatible.

\bigskip

In the following we will prove a theorem on the structure of the equivalent classes.
  For convenience, we call a set {\it equivalent set}  if it is consisted of equivalent elements. For $A, B$ equivalent sets, we write $A\sim B$ if there are $x\in A,
y\in B$ such that $x\sim y$.

\bigskip

\begin{theorem} \label {th3.2}
Let $V_0 = \bigcup_{i=1}^s J_i$ be the union of the equivalent classes of a compatible relation, and let $ \{J_i^*\}_{i=1}^s$ be the family of boundary classes in $V_*$. Then $\{\bar J_i^*\}_{i=1}^s$ are attractor of a graph directed system, and $\dim_H (\bar J_i^*) = \dim_B (\bar J_i^*)$.

\vspace {0.1cm}

 For the non-boundary classes, they  are finite unions of contractive similitude images of $\{J_i^*\}_{i=1}^s$.

\end{theorem}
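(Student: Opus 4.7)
The approach is to encode the boundary-class closures $\{\bar J_i^*\}_{i=1}^s$ as the attractors of a graph-directed iterated function system (GIFS) built from the IFS $\{F_j\}$ and the level-$1$ compatibility data, then invoke the Mauldin--Williams theory to read off equality of Hausdorff and box dimensions. To set up the GIFS, I first analyze $J_i^{(1)}$, the $\sim_1$-equivalent class of $J_i$ in $V_1$. Since $\sim_1$ is generated by rules (i) and (ii), one writes
$$J_i^{(1)} = J_i \cup \bigcup_{(j,k) \in E_i} F_j(J_k),$$
where $E_i \subseteq \{1,\ldots,N\}\times\{1,\ldots,s\}$ collects those pairs $(j,k)$ for which $F_j(J_k)$ ends up $\sim_1$-equivalent to $J_i$. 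This defines a directed multigraph $G$ with vertex set $\{1,\ldots,s\}$ and, for each $(j,k) \in E_i$, an edge from $i$ to $k$ labelled by the contraction $F_j$ of ratio $\rho$.

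The central step is to show by induction on $n$ that
$$J_i^{(n+1)} = J_i \cup \bigcup_{(j,k) \in E_i} F_j\bigl(J_k^{(n)}\bigr).$$
The inclusion ``$\supseteq$'' is routine: iterating rule (ii) inside the cell $F_j(K)$ lifts the level-$n$ class $J_k^{(n)}$ to a $\sim_{n+1}$-equivalent subset of $F_j(V_n)$ containing $F_j(J_k)$, and $F_j(J_k)$ already sits inside $J_i^{(1)} \subseteq J_i^{(n+1)}$. The reverse inclusion ``$\subseteq$'' is the crux of the argument and the step I expect to be the main obstacle: one must verify that compatibility prevents any cross-cell identification at level $n+1$ beyond those already present at level $1$. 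Using the p.c.f. property (so that two distinct $1$-cells meet only at a finite subset of $V_1$), every chain witnessing $x \sim_{n+1} y$ decomposes into alternating within-cell stretches, which by induction lie in appropriate $F_j(J_k^{(n)})$, and cell-transition points in the finite overlap set, which are already encoded in the level-$1$ description of $J_i^{(1)}$. The compatibility clause is exactly what forbids new level-$n+1$ chains from identifying $V_n$-points that were previously inequivalent.

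Taking closures on both sides of this identity (and using continuity of $F_j$) yields the graph-directed attractor equation
$$\bar J_i^* = J_i \cup \bigcup_{(j,k) \in E_i} F_j\bigl(\bar J_k^*\bigr),$$
which identifies $\{\bar J_i^*\}$ as the unique attractor family of $G$ in the Mauldin--Williams sense; the finite seed $J_i$ causes no difficulty, since each point of $J_i$ is the fixed point of some $F_\omega$ and can be adjoined as a trivial contractive image. The Mauldin--Williams dimension theorem, applied on each strongly connected component of $G$, then yields $\dim_H(\bar J_i^*) = \dim_B(\bar J_i^*)$. For the second assertion, if $J \in V_n^\sim$ is non-boundary, then $J \cap V_0 = \emptyset$ implies that no rule-(i) identification ever draws a point of $V_0$ into $J$'s orbit; consequently $J$ is generated by iterated rule-(ii) applications to a finite union of level-$n$ pieces $F_{\omega_\ell}(J_{k_\ell})$. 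The same self-similar telescoping as in the inductive step then exhibits the closure of the full equivalent class as a finite union $\bigcup_\ell F_{\omega_\ell}(\bar J_{k_\ell}^*)$ of contractive similitude images of the boundary-class closures, completing the proof.
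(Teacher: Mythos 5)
Your overall route is the same as the paper's: you build the level-$1$ decomposition of each class into images $F_j(J_k)$, propagate it inductively via $J_i^{(n+1)} = J_i \cup \bigcup_{(j,k)\in E_i} F_j\bigl(J_k^{(n)}\bigr)$ (the paper encodes exactly this through the index sets $\Gamma_{i,j}$, $\Lambda_{i,j}$ and the families ${\mathcal B}^{(n)}$, ${\mathcal N}^{(n)}_\ell$), pass to the limit to get the graph-directed equations for $\bar J_i^*$, and treat non-boundary classes as finite unions of similitude images of the boundary classes. That part of your argument, including the role of compatibility in preventing new cross-cell identifications, is sound and at the same level of detail as the paper's own proof.

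The genuine gap is in your last step, the dimension equality. The results you invoke give $\dim_H = \dim_B$ only for a strongly connected (irreducible) graph-directed system, and Falconer's version assumes strong separation rather than the open set condition that holds here. Your remedy of ``applying the Mauldin--Williams dimension theorem on each strongly connected component'' does not close this: when $G$ is reducible, $\bar J_i^*$ is the attractor of the component of $i$ together with \emph{countably} many similitude images of attractors of downstream components, and while Hausdorff dimension is countably stable, upper box dimension is not; so the component-wise theorem yields the lower bound $\dim_H(\bar J_i^*) \geq \max$ of the reachable component dimensions, but not the matching upper bound for $\dim_B(\bar J_i^*)$. What is needed is a global cell-counting estimate for the reducible system, namely $N_i(n) \asymp \lambda^n$ (or $n\lambda^n$ when two reachable components share the maximal Perron eigenvalue), combined with the OSC to control how many $n$-cells a ball of radius $\rho^n$ can meet. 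This is exactly the supplement the paper provides in its Appendix (and which it also needs for the box count in Lemma \ref{th3.4}); Mauldin--Williams Theorems 4 and 5 handle the Hausdorff measure in the reducible case (it may be infinite) but say nothing about box dimension, so some version of this counting argument cannot be omitted from your proof.
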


 \medskip

 \begin {proof}
Let $V_0 = \bigcup_{i=1}^s J_i$, and let $J_i^{(n)}$ be the boundary  classes in $V_n, n\geq 0$ ($J_i^{(0)}= J_i$). Note that by (ii),  $F_k(J^{(0)}_j)$ is an equivalent set. It is easy to see that $J^{(1)}_i$ is generated by the  vertices in $ {\mathcal S}_i:=  \big\{F_k(J^{(0)}_i): F_k \ \hbox {has fixed point in } \ J^{(0)}_i \big \}$ (by (i)), together with those in $\big \{F_k(J^{(0)}_j):  F_k(J^{(0)}_j) \sim J,\ J \in {\mathcal S}_i,\  j \not =i \big \}$.  We can hence express $J_i^{(1)}$ as
$$
J_i^{(1)} = {\bigcup}_{j=1}^s{\bigcup}_{k\in \Gamma_{i, j}} F_k(J^{(0)}_j), \quad 1\leq i \leq s.
$$
where $\Gamma_{i, j} = \{k: F_k(J^{(0)}_j)\sim  J,\ J \in {\mathcal S}_i\}$ (see Figure \ref{fig3}b). We denote these boundary classes of $\{J_i^{(1)}\}_{i=1}^s$ by ${\mathcal B}^{(1)}$.

\vspace {0.1cm}

To determine the  non-boundary classes $\{I^{(1)}_i\}_{i=1}^t$ in $V_1$ (it may be empty), we  replace the above ${\mathcal S}_i$ by  $ {\mathcal T}_i:= \big \{F_k(J^{(0)}_i): F_k(J^{(0)}_i) \not \sim J, \ \forall \ J \in {\mathcal S}_j ,  \ 1\leq j\leq s \big \}$, and by the same way, we obtain
$$
I_i^{(1)} = {\bigcup}_{j=1}^s{\bigcup}_{k\in \Lambda_{i, j}} F_k(J^{(0)}_j),  \quad 1\leq i\leq t
$$
(see Figure \ref{fig3}b). Note that $I_i^{(1)} \subset V_1 \setminus V_0$, and the union of all the boundary and non-boundary classes is $V_1$. We denote this class $\{I^{(1)}_i\}_{i=1}^t$ by ${\mathcal N}^{(1)}_1$.

\medskip
Next, using the above procedure in $V_2$, we obtain,
$$
J_i^{(2)} = {\bigcup}_{j=1}^s{\bigcup}_{k\in \Gamma_{i, j}} F_k(J^{(1)}_j), \quad I_i^{(2)} = {\bigcup}_{j=1}^s{\bigcup}_{k\in \Lambda_{i, j}} F_k(J^{(1)}_j),
$$
denote the two classes by ${\mathcal B}^{(2)}$ and ${\mathcal N}^{(2)}_1$. There are other non-boundary classes  appeared, namely, those $F_k ({\mathcal N}^{(1)}_1),  1\leq k \leq N$  (see Figure \ref{fig3}c, note that for $I_i^{(1)} \in {\mathcal N}^{(1)}_1$, we have  $I_i^{(1)} \subset V_1 \setminus V_0$; hence $F_k (I_i^{(1)}) \subset F_k(V_1) \setminus F_k(V_0)$, so that $F_k (I_i^{(1)})$ remains an equivalent class in $V_2$).  We denote this  family  by ${\mathcal N}^{(2)}_2$. Hence  the family of non-boundary classes is  ${\mathcal N}^{(2)}_1 \cup {\mathcal N}^{(2)}_2$.

\medskip

Inductively,  we obtain ${\mathcal B}^{(n)}$,  and ${\mathcal N}^{(n)} := {\mathcal N}^{(n)}_1 \cup {\mathcal N}^{(n)}_2\cdots \cup {\mathcal N}^{(n)}_n$ where ${\mathcal N}^{(n)}_{\ell} = \{ F_k ({\mathcal N}^{(n-1)}_{\ell-1}):  1\leq k \leq N\} = \{ F_\omega ({\mathcal N}^{(n-\ell + 1)}_{1}):  |\omega| = \ell-1 \},\  2\leq \ell \leq n$. Therefore, for $ J_i^* \in {\mathcal B}^*$, i.e.,  $J_i^* \subset  V_*$, we have
$$
 J_i^* = {\bigcup}_{j=1}^s{\bigcup}_{k\in \Gamma_{i, j}} F_k(J^*_j),  \quad 1\leq i \leq s,
 $$
and $\Gamma_{i, j} = \{ k: F_k(J^*_j) \cap J_i^* \not = \emptyset\}$. We can set up the graph directed system $({\mathcal V}, \Gamma )$ with ${\mathcal V} = \{1, \cdots, s\}$ as the index of the  $ J^*_i$, and  $ \Gamma = \bigcup_{1\leq i, j\leq s} \Gamma_{ i,j}$ as the edge set. Therefore, $\bar J_i^*, 1\leq s \leq j$ are the attractors of $({\mathcal V}, \Gamma )$, and hence graph directed sets \cite {MW}.  For ${\mathcal N}^{*}$, we have for  $I_i^{*} \in {\mathcal N}^{*}_1$,
$$
I_i^{*} = {\bigcup}_{j=1}^s{\bigcup}_{k\in \Lambda_{i, j}} F_k(J^*_j),
$$
and for $\ell \geq 2$, ${\mathcal N}^*_\ell = \{ F_\omega ({\mathcal N}^{*}_1): |\omega| = \ell -1\}$. Hence for $I^* \in {\mathcal N}^{*}$, $\bar I^*$ is a finite union of graph directed sets.

\medskip
That $\dim_H (\bar J_i^*) = \dim_B (\bar J_i^*)$ is in \cite [p. 42] {F} where the graph directed system is assumed to be irreducible and the IFS is strongly separated.  In the present case, the IFS satisfies the OSC; the proof can easily be adjusted by observing that every ball of radius $r$ can intersect at most a number $\ell$ of cells of comparable size for some $\ell$. For the non-irreducible case,   Mauldin and Williams  in \cite [Theorems 4 and 5]{MW} studied the Hausdorff measures of the attractors (they can be infinite),  but left out the box dimension. This can easily be supplemented, and  for the sake of completeness and for the need of the box count in Lemma \ref{th3.4}, we will sketch the main idea of this in the Appendix.
\end{proof}

\begin{figure}[th]\label{fig3}
\textrm{
\begin{tabular}{cc}
\begin{minipage}[t]{1.6in} \includegraphics[width=1.6in]{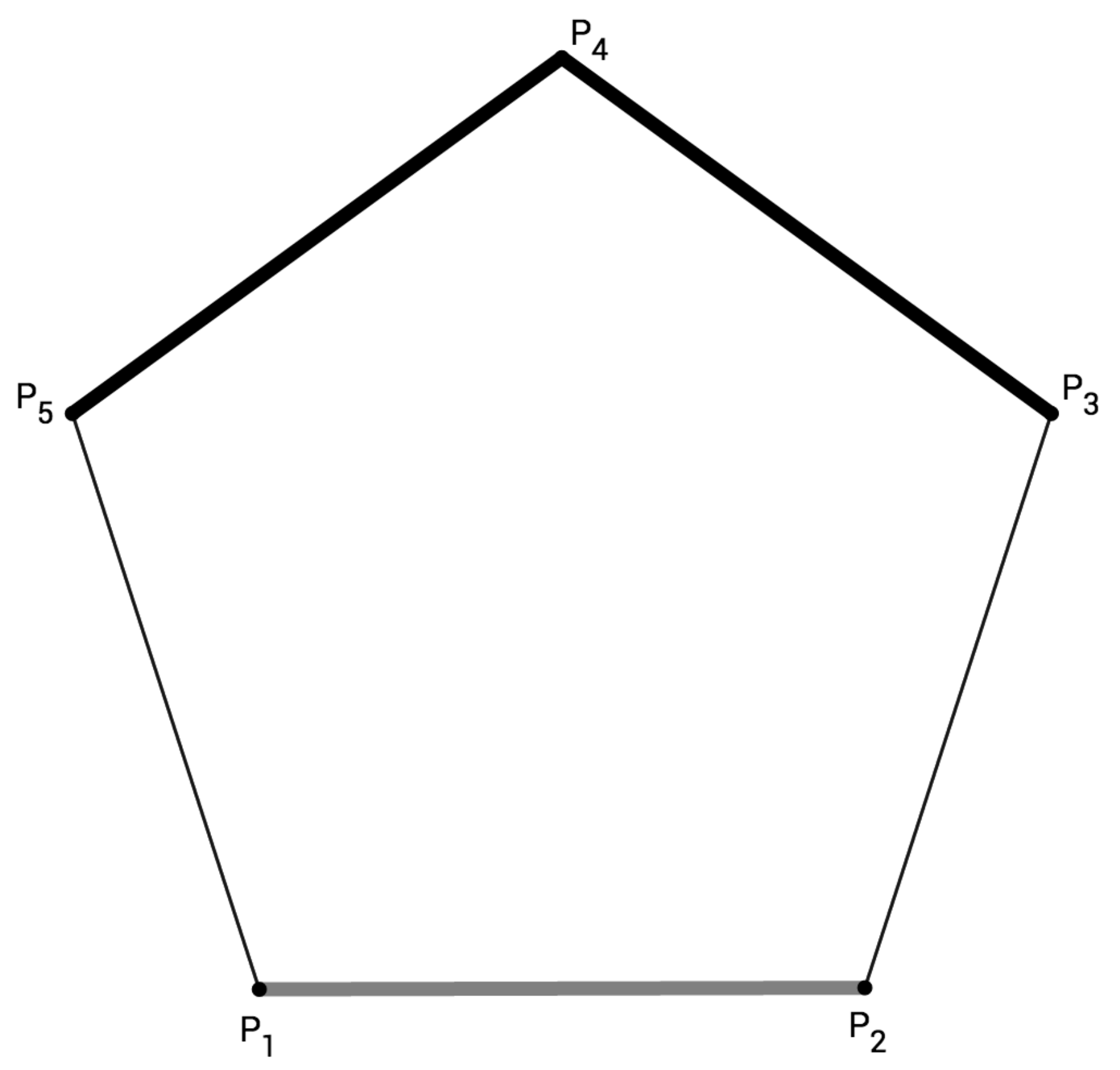}
 \center{3a}
 \end{minipage}  \qquad \quad
 \begin{minipage}[t]{1.6in} \includegraphics[width=1.6in]{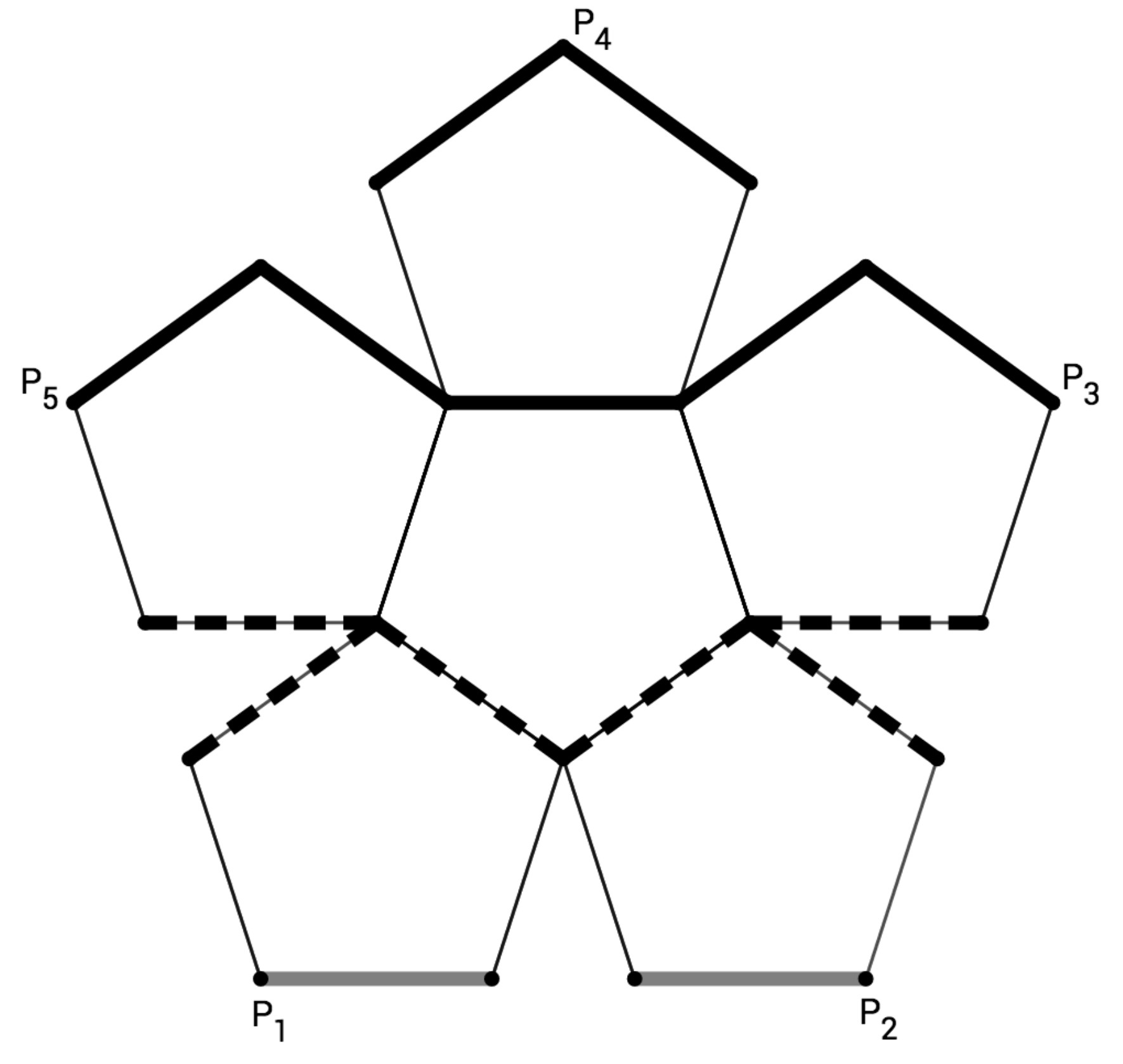}
 \center{3b}
\end{minipage}\qquad \quad
 \begin{minipage}[t]{1.6in} \includegraphics[width=1.6in]{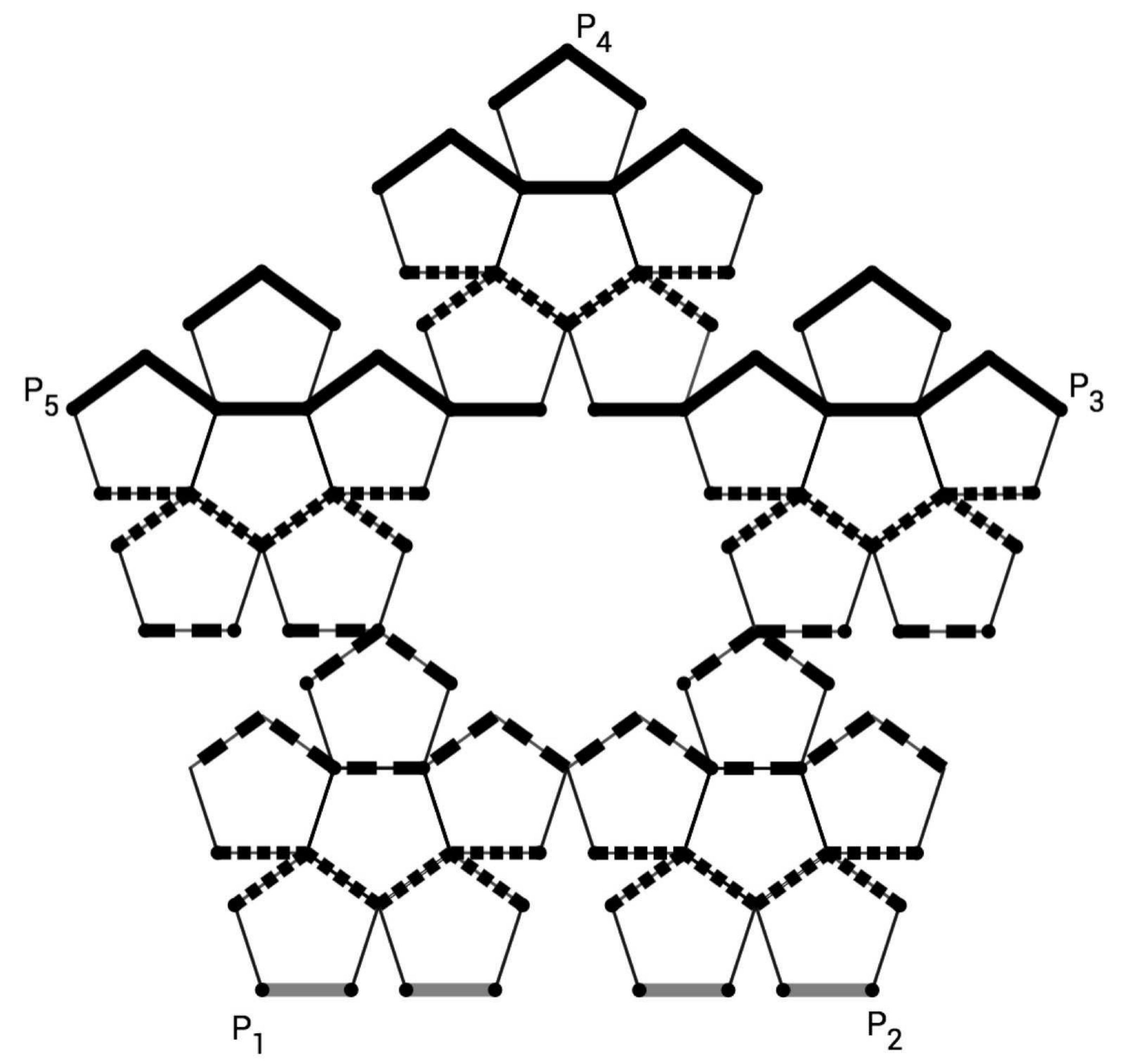}
 \center{3c}
\end{minipage} &
\end{tabular}
}\caption{ The equivalent classes defined by $J_1= \{p_1, p_2\}, J_2 = \{p_3, p_4, p_5\}$. In the iteration, the two boundary classes follows a graph directed system, and new non-boundary classes are generated in each step.}
\end{figure}

\bigskip

To obtain some separation property of the boundary classes, we introduce a  property on the compatible relation:

\vspace {0.2cm}
\noindent (B) \ {\it  Any  $1$-cell can intersect at most one boundary class in $V_1$.}

\medskip

Note that the previous examples all have property (B). On the other hand, on the pentagasket, if we let $J_1  =\{p_1\}$ and $J_2 = \{p_2, \cdots, p_5\}$. Then it is easy to see that it  defines a compatible relation, but does not have property (B); in this case,  $\bar J^*_2 = K$, and the non-boundary classes are some iterations of $\{p_1\}$.

\medskip

\begin {lemma} \label {th3.3}  Under property (B), any $n$-cell $V_\omega$, $n\geq 1$, intersects at most one boundary class in $V_n$.
\end{lemma}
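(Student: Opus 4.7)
The plan is to argue by induction on $n$; the base case $n=1$ is exactly property (B). For the inductive step, let $n \geq 2$, write $\omega = \omega_1 \omega'$ with $|\omega'|=n-1$, so that $V_\omega = F_{\omega_1}(V_{\omega'})$ lies inside the 1-cell $F_{\omega_1}(V_{n-1})$ and $V_{\omega'}$ is an $(n-1)$-cell. Suppose $p,q \in V_\omega$ lie in boundary classes $J_i^{(n)}, J_{i'}^{(n)}$ respectively, and pull them back under $F_{\omega_1}$: write $p = F_{\omega_1}(\tilde p)$, $q = F_{\omega_1}(\tilde q)$ with $\tilde p, \tilde q \in V_{\omega'}$. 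The goal is to show $i=i'$.

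The key claim is that $\tilde p$ lies in some boundary class $J_j^{(n-1)}$ of $V_{n-1}$ with $\omega_1 \in \Gamma_{i,j}$ (and similarly for $\tilde q$ with some $j'$ and $\omega_1 \in \Gamma_{i',j'}$). This splits according to whether $\tilde p \in V_0$ or not, equivalently (by the p.c.f. identity $V_1 \cap F_{\omega_1}(V_{n-1}) = F_{\omega_1}(V_0)$) whether $p$ sits on the shared boundary of the 1-cell $F_{\omega_1}(V_{n-1})$ or in its interior. An interior $p$ forces the decomposition $J_i^{(n)} = \bigcup_{j=1}^{s}\bigcup_{k\in\Gamma_{i,j}} F_k(J_j^{(n-1)})$ to contribute $p$ via the summand $k=\omega_1$, giving $\tilde p \in J_j^{(n-1)}$ directly. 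A boundary $\tilde p \in V_0 \cap V_{\omega'}$ sits in $J_j^{(0)} \subset J_j^{(n-1)}$ for some $j$, and rule (ii) makes $F_{\omega_1}(J_j^{(0)})$ an equivalence set in $V_1$ containing $p \in J_i^{(n)} \cap V_1 = J_i^{(1)}$; hence $F_{\omega_1}(J_j^{(0)}) \subset J_i^{(1)}$, which unwinds to $\omega_1 \in \Gamma_{i,j}$ via the definitions of $\Gamma_{i,j}$ and $\mathcal{S}_i \subset J_i^{(1)}$.

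The inductive hypothesis applied to the $(n-1)$-cell $V_{\omega'}$ now forces $j=j'$. Since $\omega_1 \in \Gamma_{i,j} \cap \Gamma_{i',j}$, the nonempty equivalence set $F_{\omega_1}(J_j^{(0)})$ lies in both $J_i^{(1)}$ and $J_{i'}^{(1)}$; as distinct equivalence classes are disjoint, $J_i^{(1)}=J_{i'}^{(1)}$, i.e., $i=i'$, as required.

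The main obstacle is the case analysis in the middle paragraph: vertices on the shared boundary $F_{\omega_1}(V_0) \subset V_1$ cannot be traced through the cell decomposition of $J_i^{(n)}$ in an unambiguous way (a single vertex may be represented by several 1-cell pieces $F_k(J_j^{(n-1)})$ at once), and one has to invoke rule (ii) directly to conclude that $F_{\omega_1}$ carries boundary classes of $V_{n-1}$ into a single boundary class of $V_n$. Property (B) is precisely what rules out competing boundary classes at level $1$, seeding the whole induction through the base case.
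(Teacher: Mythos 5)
Your proof is correct, but it takes a genuinely different route from the paper's. You argue by induction on $n$, peeling off the \emph{first} letter of $\omega$ and using the graph-directed decomposition $J_i^{(n)}=\bigcup_{j}\bigcup_{k\in\Gamma_{i,j}}F_k(J_j^{(n-1)})$ from the proof of Theorem \ref{th3.2} to reduce the claim for the $n$-cell $F_{\omega_1}(V_{\omega'})$ to the claim for the $(n-1)$-cell $V_{\omega'}$, with property (B) serving only as the base case; your two-case analysis (whether $\tilde p\in V_0$ or not, settled respectively by rule (ii) of Definition \ref{de3.1} together with compatibility, and by the p.c.f. fact that distinct $1$-cells meet only inside $F_k(V_0)$) is exactly what is needed to handle vertices on shared cell boundaries, and the final step, that $\omega_1\in\Gamma_{i,j}\cap\Gamma_{i',j}$ forces $J_i^{(1)}=J_{i'}^{(1)}$ because $F_{\omega_1}(J_j^{(0)})$ is a nonempty equivalence set, is sound. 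The paper instead argues in a single step at the \emph{last} letter: it passes to the parent cell $K_{\omega^-}$, notes by rules (i)--(ii) alone that $F_{\omega^-}^{-1}(J\cap V_{\omega^-})$ is a union of equivalence classes of $V_0$, and applies property (B) directly to the $1$-cell $F_{\omega^-}^{-1}(V_\omega)$, with no induction and no appeal to Theorem \ref{th3.2}. The paper's version is shorter, but it only tracks the trace of a boundary class on the corner set $V_{\omega^-}=F_{\omega^-}(V_0)$ and leaves implicit why a boundary class meeting $V_\omega$ must be detected through that corner data (a point of $J\cap V_\omega$ need not lie in $V_{\omega^-}$); your induction makes precisely this point explicit, at the cost of invoking the structural decomposition of Theorem \ref{th3.2} and the characterization $\Gamma_{i,j}=\{k:\ F_k(J_j^{*})\cap J_i^{*}\neq\emptyset\}$ rather than only rules (i)--(ii).
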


\medskip

\begin{proof}  We use $\omega^-$ to denote the parent of $\omega$. Let $V_\omega$ be an $n$-cell, $n\geq 2$. For any boundary class $J$ in $V_n$, $J \cap V_{\omega^-}$ is a finite union of $F_{\omega^-}$-images of boundary classes in $V_0$ (by rules (i) and (ii) in Definition \ref{de3.1}), i.e., $F^{-1}_{\omega^-}(J \cap V_{\omega^-}) $ is a finite union of equivalent classes in $V_0$. As $F^{-1}_{\omega^-} (V_\omega)$ is a $1$-cell in $V_1$, it intersects at most one  $F^{-1}_{\omega^-}(J \cap V_{\omega^-})$ (by property (B)). Hence $V_\omega$  intersects at most one boundary class in $V_n$.
%
\end{proof}

\bigskip

Let
$
E_n[u] = \sum_{x,y\in V_\omega ,\ |\omega| = n} |u(x)-u(y)|^2
$
be the primal energy of $u$.   We will extend the consideration to the quotient network. For $u \in \ell (V_n^\sim)$, $u$ can be considered as a function in $ \ell(V_n)$ that takes constant value on each equivalent class $J \in V_n^\sim$. We have
\begin{equation}\label {eq3.1}
E^{\sim}_n [u] = \sum_{J, J' \in V_n^{\sim}}n_{J,J'}|u(J) - u(J')|^2= E_n[u], \quad  u\in\ell (V_n^\sim),
\end{equation}
where $n_{J,J'}$ is the number of the edges  connecting  $p\in J$ and $q\in J'$. (Note that $n_{J,J'}$  is well defined as the set of equivalent classes gives a partition of $V_n$,  hence for  $J$ as a subset in $V_n$, each edge going out $J$  will meet with another equivalent class.)
Let  $R^\sim_n(J, J')$ denote the corresponding trace of $V_n^{\sim}$ on $V_0^{\sim}$, then for $u \in \ell(V_0^\sim)$, as in \eqref{eq2.4'}, we have
{\small  \begin{align}\label{eq3.2}
& \min_{v\in\ell(V_n^\sim),\ v|_{V_0} = u }E_n[v]
=  \sum_{ i \not= j} \frac{1}{R^\sim_n(J_i,J_j)}|u(J_i)-u(J_j)|^2,
\end{align}}
We denote by $R^\sim(J_i,J_j)$ the asymptotic geometric growth rate of $\{R^\sim_n(J_i,J_j)\}_{n=0}^\infty$ if exists (see Definition \ref{de2.5}).

\bigskip
For $u\in \ell(V_*)$, and for a non-trivial equivalent class $J$ (i.e., contain more than one point)  of $V_n$, we denote by $E_{J,n}(u)$ the energy of $u$ on $J$:  the  summation of all the terms $(u(x)-u(y))^2$ in $E_n(u)$ with $x,y\in J\cap V_\omega,  |\omega|=n$. Similarly, we can define $E_{J,J', n}(u)$ with  $x \in J \cap V_\omega, y \in J'\cap V_\omega$. The following lemma is a sufficient condition of the existence of a renormalization factor localized at a boundary class.

\medskip

\begin {lemma} \label {th3.4}
 Let $J$ be a boundary class in $V_*$,  and $\bar J$ has  Hausdorff dimension $\gamma$. Suppose  there exists $p, q \in V_0,  p \not = q$ such that  $R:=R(p, q)  < \rho^{-(2- \gamma)}$,
then for any Lipschitz function $u$ on  $\bar J$, we have
\begin{equation} \label {eq3.3}
\sup_{n\geq 0} \ R^{n} E_{J, n} (u) < \infty
 \end{equation}

Furthermore if ${\mathcal H}^\gamma (\bar J) < \infty$, then the condition can be relax to $
R \leq \rho^{-(2- \gamma)}
$ and the same result holds.
\end {lemma}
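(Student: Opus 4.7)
The plan is to bound $E_{J,n}(u)$ by separating two contributions: the size of $|u(x)-u(y)|^2$ for $x,y$ in a common $n$-cell, controlled by the Lipschitz hypothesis, and the number of $n$-cells that actually contribute to $E_{J,n}(u)$, controlled by the dimension of $\bar J$. Once this decomposition is in hand, comparing the resulting geometric decay rate of $E_{J,n}(u)$ against $R^{-n}$ will give the conclusion.

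For the first piece, since $u$ is Lipschitz on $\bar J$ with some constant $L$, and any two points $x,y$ lying in a common $n$-cell $V_\omega=F_\omega(V_0)$ satisfy $|x-y|\leq \mathrm{diam}(V_\omega)\leq C_1\rho^n$, I have $|u(x)-u(y)|^2\leq C_2\rho^{2n}$. Because $\#(J\cap V_\omega)\leq \#V_0$, each contributing cell adds at most $C_3\rho^{2n}$ to $E_{J,n}(u)$, giving
\begin{equation*}
E_{J,n}(u)\ \leq\ C_3\,\rho^{2n}\cdot M_n(J), \qquad M_n(J):=\#\{\omega:|\omega|=n,\ J\cap V_\omega\neq\emptyset\}.
\end{equation*}

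For the second piece, I will estimate $M_n(J)$ from the dimension of $\bar J$. By Theorem \ref{th3.2}, $\bar J$ is an attractor of a graph-directed system satisfying the OSC, with $\dim_H\bar J=\dim_B\bar J=\gamma$. Each $n$-cell has diameter $\asymp\rho^n$ and (by OSC) contains an inner ball of radius $\asymp\rho^n$, so $M_n(J)$ is comparable to a packing number of $\bar J$ at scale $\rho^n$. From the equality of box and Hausdorff dimensions, for any $\varepsilon>0$,
\begin{equation*}
M_n(J)\ \leq\ C_\varepsilon\,\rho^{-(\gamma+\varepsilon)n}\qquad\text{for all } n\geq 0.
\end{equation*}
When $\mathcal{H}^\gamma(\bar J)<\infty$, the matrix-recursion structure for graph-directed attractors (the estimate whose argument is outlined in the Appendix) upgrades this to $M_n(J)\leq C\rho^{-\gamma n}$ with $\varepsilon=0$.

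Combining the two steps yields $E_{J,n}(u)\leq C_4\rho^{(2-\gamma-\varepsilon)n}$, hence
\begin{equation*}
R^n E_{J,n}(u)\ \leq\ C_4\,(R\rho^{2-\gamma-\varepsilon})^n.
\end{equation*}
If $R<\rho^{-(2-\gamma)}$, choose $\varepsilon>0$ small enough that $R\rho^{2-\gamma-\varepsilon}<1$ still holds; then the right side tends to zero, and in particular the supremum in $n$ is finite. In the refined setting $\mathcal{H}^\gamma(\bar J)<\infty$ with $R\leq \rho^{-(2-\gamma)}$, apply the same estimate with $\varepsilon=0$: then $R\rho^{2-\gamma}\leq 1$, giving the uniform bound $R^n E_{J,n}(u)\leq C_4$. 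The main obstacle is the uniform upper estimate on $M_n(J)$ in the two regimes: the strict inequality case only needs the equality of Hausdorff and box dimensions (which permits an $\varepsilon$-loss), while the borderline equality case genuinely requires the sharp covering bound from the finite $\gamma$-Hausdorff measure hypothesis via the graph-directed recursion, and this is the place where the two parts of the statement diverge.
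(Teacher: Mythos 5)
Your proposal is correct and follows essentially the same route as the paper's own proof: bound each pair in a common $n$-cell by the Lipschitz estimate $C\rho^{2n}$, count the $n$-cells meeting $\bar J$ via the equality of Hausdorff and box dimensions from Theorem \ref{th3.2} (with an $\varepsilon$-loss in the generic case and the sharp count from the Appendix when ${\mathcal H}^\gamma(\bar J)<\infty$), and compare with $R^n$. The only difference is that you spell out the intermediate cell-counting bound a bit more explicitly than the paper does.
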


\medskip
\begin{proof}     Let $N(\rho^n)$ be the count of the $n$-cells $K_\omega$ that intersects $\bar J$. It follows from Theorem \ref{th3.2} that the Hausdorff dimension of $\bar J$ equals its box dimension, and $\gamma = \lim\limits_{n\to \infty} \frac {\log N(\rho^n)}{\log \rho^n}$.   Hence for $\varepsilon >0$ satisfies $ R < \rho^{-(2- (\gamma + \varepsilon))}$, there exists $C>0$ such that  $ N(\rho^n) < C \rho^{-n(\gamma + \varepsilon)}$.  This implies that for all $n.$
$$
R^{n} E_{J, n} (u) \ \leq \ R^{n} \sum_{x, y \in J} |u(x)-u(y)|^2 \ \leq \ C' R^{n} \rho^{-n(\gamma + \varepsilon)} \rho^{2n} <C',
$$
and \eqref{eq3.3} follows.  If ${\mathcal H}^\gamma (\bar J) < \infty$, then we can actually have $ N(\rho^n) < C \rho^{-n\gamma}$ (see Appendix), and the above inequality holds for $
 R \leq \rho^{-(2- \gamma)}$.
\end{proof}

\bigskip
\section{\large\bf Critical exponents of Besov spaces}
In this section, we prove some general results for the critical exponents $\sigma^*$ and $\sigma^\#$ of the Besov spaces on the  p.c.f. sets with respect to the primal energy, then apply them to the two concrete cases  in the next section.

\bigskip

%
%

\medskip

\begin{theorem}\label{th4.1}
Let $K$ be a p.c.f. self-similar set with an IFS satisfying (\ref{eq2.1}). Assume $R(p, q)\ (>1), \ p, q \in V_0$ exist, and let
$$
R^*=\min\{R(p,q):\ p\neq q, \ p,q\in V_0\},
$$
then for the Besov spaces $B^\sigma_{2, \infty}$ defined on $K$,  the critical exponent $\sigma^*=\frac12\left(\frac{\log R^*}{-\log\rho}+\alpha\right)$.

\vspace {0.1cm}
Furthermore  at $\sigma^*$,

\vspace {0.1cm}
\ \ \ (i)\  if
$ R^{*n} \leq R_n(p, q)$ \ for all  $n\geq 0$ and $ p, q \in V_0$,
then $B^{\sigma^*}_{2, \infty}$ is dense in $C(K)$;

\vspace {0.1cm}

\ (ii) \  if  $R^*$ satisfies \
$
\lim\limits_{n\to \infty}\  \frac {R^{*n}}{R_n(p, q)} = \infty$ \ for some $p, q \in V_0$, then $B^{\sigma^*}_{2, \infty}$ is not dense in $C(K)$.

\end{theorem}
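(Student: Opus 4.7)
The plan has four pieces: the upper bound $\sigma^*\leq\sigma_0$, the lower bound $\sigma^*\geq\sigma_0$, and parts (i) and (ii), where I set $\sigma_0:=\tfrac12\bigl(\tfrac{\log R^*}{-\log\rho}+\alpha\bigr)$ so that $\rho^{-(2\sigma_0-\alpha)}=R^*$. The three tools I will use repeatedly are the discrete characterization of $[u]_{B^\sigma_{2,\infty}}$ from Proposition \ref{th1.1}, the trace formula \eqref{eq2.4'}, and the sufficient condition for density in Proposition \ref{th2.6}. Since $R^*>1$ forces $\sigma_0>\alpha/2$, Proposition \ref{th2.2} gives $B^\sigma_{2,\infty}\subset C(K)$ for all $\sigma\geq\sigma_0$, so pointwise evaluations on $V_0$ are meaningful throughout.

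For the upper bound, fix $\sigma>\sigma_0$ and choose $p^*,q^*\in V_0$ with $R(p^*,q^*)=R^*$. For any $u\in B^\sigma_{2,\infty}$, Proposition \ref{th1.1} combined with the trace inequality $E_j[u]\geq R_j(p^*,q^*)^{-1}|u(p^*)-u(q^*)|^2$ coming from \eqref{eq2.4'} yields the uniform bound
\[
\rho^{-(2\sigma-\alpha)j}\,R_j(p^*,q^*)^{-1}\,|u(p^*)-u(q^*)|^2 \leq C.
\]
Pick $\varepsilon>0$ so small that $\rho^{-(2\sigma-\alpha)}>R^{*(1+\varepsilon)}$; the upper half of Definition \ref{de2.5} gives $R_j(p^*,q^*)\leq R^{*(1+\varepsilon)j}$ for large $j$, so the prefactor grows exponentially in $j$, forcing $u(p^*)=u(q^*)$. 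Thus every element of $B^\sigma_{2,\infty}\subset C(K)$ lies in the proper closed subspace $\{v\in C(K):v(p^*)=v(q^*)\}$, density in $C(K)$ fails, and $\sigma^*\leq\sigma_0$.

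For the lower bound, fix $\sigma<\sigma_0$ and choose $\varepsilon>0$ with $\rho^{-(2\sigma-\alpha)}<R^{*(1-\varepsilon)}$. For each of the finitely many pairs $p\neq q$ in $V_0$, $R(p,q)\geq R^*$ and the lower half of Definition \ref{de2.5} give a common $N$ such that $R_N(p,q)\geq R^{*(1-\varepsilon)N}\geq\rho^{-(2\sigma-\alpha)N}$; this is exactly hypothesis \eqref{eq2.5} of Proposition \ref{th2.6}, so density follows. Combining the two bounds yields $\sigma^*=\sigma_0$. Part (i) is then immediate: at $\sigma=\sigma^*$ the hypothesis $R^{*n}\leq R_n(p,q)$ is exactly \eqref{eq2.5} for every $N$ (in particular $N=1$), so Proposition \ref{th2.6} applies verbatim.

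Part (ii) is the mirror of the upper-bound argument carried out at the critical value itself. For $u\in B^{\sigma^*}_{2,\infty}$, Proposition \ref{th1.1} together with the trace gives $R^{*j}R_j(p,q)^{-1}|u(p)-u(q)|^2\leq C$ uniformly in $j$, and the divergence hypothesis $R^{*j}/R_j(p,q)\to\infty$ forces $u(p)=u(q)$, ruling out density in $C(K)$ by the same subspace argument as before. The main obstacle throughout is the sub-exponential slack built into Definition \ref{de2.5}: away from $\sigma_0$ it is harmlessly absorbed into the $\varepsilon$-room, but at the critical value $\sigma^*$ that slack is precisely what separates scenarios (i) and (ii) and explains why the theorem must state them as complementary sufficient conditions rather than a single clean dichotomy. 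Everything else reduces to bookkeeping with the discrete energy, the trace formula, and the extension given by Proposition \ref{th2.6}.
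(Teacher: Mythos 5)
Your proposal is correct and follows essentially the same route as the paper: the upper bound and part (ii) come from combining the discrete semi-norm (Proposition \ref{th1.1}) with the trace lower bound from \eqref{eq2.4'} and the growth-rate slack in Definition \ref{de2.5} to force $u(p)=u(q)$, while the lower bound and part (i) are exactly applications of Proposition \ref{th2.6}. The only cosmetic difference is bookkeeping (you absorb the $\varepsilon$-slack into $\rho^{-(2\sigma-\alpha)}>R^{*(1+\varepsilon)}$ where the paper writes $\sigma=\sigma^*+2\varepsilon$ and $R^*_\varepsilon=\rho^{-2\varepsilon}R^*$), which changes nothing of substance.
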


\medskip

\begin{proof}   Let $\sigma = \frac12\left(\frac{\log R^*}{-\log\rho}+\alpha\right) +2\varepsilon$, $\varepsilon>0$, then $2\sigma - \alpha>0$.  We will  prove that there exist $p, q \in V_0$, $p \not = q$,  such that $u(p) = u(q)$ for  any $u \in B^\sigma_{2, \infty}$. This implies $B^\sigma_{2, \infty}$ is not dense in $C(K)$, and by definition, we have  $\sigma^* \leq \frac12\left(\frac{\log R^*}{-\log\rho}+\alpha\right)$.

\vspace {0.1cm}
To this end, for any $u \in B^\sigma_{2, \infty}$, we  restrict $u \in \ell(V_0)$ with values $u(p_i), \  p_i \in V_0$.  From the trace of $G_n$ on $V_0$, we obtain
{\small  \begin{align}\label{eq4.1}
& \min_{v \in \ell (V_n), v|_{V_0}= u}E_n[v]
=  \sum_{ i \not= j} \frac{1}{R_n(p_i,p_j)}|u(p_i)-u(p_j)|^2.
\end{align}}
Let $R_\varepsilon^* = \rho^{-2\varepsilon}R^*$. Multiplying $R_\varepsilon^{*n}$ to both sides of (\ref{eq4.1}), it reduces to
{\small \begin{equation} \label{eq4.2}
\min_{v \in \ell (V_n), v|_{V_0}= u}\Big\{\rho^{-\big(\frac{\log R^*}{-\log\rho}+2\varepsilon\big)n}E_n[v]\Big\}
 =
\sum_{ i \not= j} \frac{R_\varepsilon^{*n}}{R_n(p_i,p_j)}|u(p_i)-u(p_j)|^2.
\end{equation}}
By the definition of $R^*$, we have  $R^*=R(p,q)$ for some  $p,q\in V_0$, and
\begin{equation} \label {eq4.3}
{\small \frac{R_\varepsilon^{*n}}{R_{n}(p,q)}}\geq \rho^{-\varepsilon n} \rightarrow\infty\quad \text{ as } n\rightarrow\infty.
\end{equation}
 As $u \in B^\sigma_{2, \infty}$, the left hand side of \eqref{eq4.2} is uniformly bounded for all $n>0$ (by Proposition \ref{th1.1}). Hence by \eqref{eq4.3}, $u(p)=u(q)$ on the right hand side of (\ref{eq4.2}). This completes the proof of  $\sigma^* \leq \frac12\left(\frac{\log R^*}{-\log\rho}+\alpha\right)$.

\vspace{0.2cm}

 Next we consider $\sigma <\frac12\left(\frac{\log R^*}{-\log\rho}+\alpha\right)$ such that $2\sigma - \alpha >0$ .  By the definition of $R^*$, we can find an  $N=N(\sigma)$ such that
\begin{equation} \label{eq4.4}
\rho^{-(2\sigma-\alpha)N}\leq R_N(p,q),  \quad  \forall \ p,q\in V_0, \ p \not = q.
\end{equation}
Then by Proposition \ref{th2.6},   $B^{\sigma}_{2, \infty}$ is dense in $C(K)$. Therefore $\frac12\left(\frac{\log R^*}{-\log\rho}+\alpha\right) \leq \sigma ^*$.

\vspace {0.2cm}

For (i) in the last part, we have
$$
\rho^{-(2\sigma^*-\alpha)n } = R^{*n} \leq R_n(p, q) \quad  \forall \ p,q \in V_0,
$$
and the assertion follows from  Proposition \ref{th2.6}.  For (ii), if we replace \eqref{eq4.3}  there by the assumption,  then the same argument applies  and we  conclude that  $u(p) = u(q)$, so that $B^{\sigma^*}_{2, \infty}$ is not dense in $C(K)$.
\end{proof}

\bigskip

  It is important to know the density of $B^{\sigma^*}_{2, \infty}$ in $C(K)$. Part (i) covers the standard cases that  renormalization factors exist \cite {K}; there is example in Section 4 (eyebolted Vicsek cross) that satisfies (ii).  In addition, we have a situation there (Sierpinski sickle) that ${R^{*n}}\asymp {R_n(p, q)}$, and the density question is not covered in the above two situations. We make use the technique of quotient network developed in the last section to handle this case.

\bigskip
\begin {proposition} \label {th4.2} With the assumptions in Theorem \ref{th4.1}, suppose there is a compatible relation  with property (B), and satisfies

\vspace{0.1cm}

(i) For each  non-trivial boundary  class $J$ of $V_*$  and for any $u$ on $J\cap V_0$, there is an
 extension of $u$ on $J$ such that \ $\sup_{n\geq0}R^{*n}E_{J,n}(u)<\infty$.

\vspace {0.1cm}

 (ii) for any two distinct equivalence classes $J_i,J_j$ of $V_0$, $R^\sim(J_i,J_j)$ exists and satisfies $R^{\sim}(J_i,J_j)>R^*$.

\vspace {0.1cm}
\noindent  Then $B^{\sigma^*}_{2, \infty}$ is dense in $C(K)$.

\end{proposition}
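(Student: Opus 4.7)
The plan is to reduce the statement to the existence of a single extension. Precisely, I will show that every $u\in \ell(V_0)$ admits an extension $\tilde u$ on $V_*$ with $\sup_{n\geq 0} R^{*n} E_n[\tilde u]<\infty$; by Corollary \ref{th2.3} and Proposition \ref{th2.4}, such $\tilde u$ extends continuously to $K$ and belongs to $B^{\sigma^*}_{2,\infty}$. Density in $C(K)$ will then follow in the usual way: given $v\in C(K)$, approximate $v$ uniformly by such extensions of $v|_{V_n}$, exactly as at the end of the proof of Proposition \ref{th2.6}.

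The construction of $\tilde u$ rests on a decomposition adapted to the compatible relation. Fix representatives $p_i \in J_i$ and write $u = u_1 + u_2$, where $u_1(p) = u(p_i)$ for $p \in J_i$ and $u_2 = u - u_1$ (so $u_2(p_i) = 0$ for every $i$). Since condition (ii) gives $R^\sim(J_i,J_j) > R^*$, one can pick $N$ with $R^{*N} \leq R^\sim_N(J_i,J_j)$ for all $i \neq j$; applying the trace formula \eqref{eq3.2} in the quotient network and iterating self-similarly as in the proof of Proposition \ref{th2.6} (but performed inside $V_*^\sim$), one obtains an extension $\tilde u_1$ on $V_*$ that is constant on each equivalence class and, via the identity $E_n[\tilde u_1] = E_n^\sim[\tilde u_1]$ from \eqref{eq3.1}, satisfies $\sup_{n\geq 0} R^{*n} E_n[\tilde u_1] < \infty$. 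For $u_2$, on each non-trivial boundary class $J_i^*$ apply property (i) to $u_2|_{J_i}$ to obtain an extension $\tilde u_2^{(i)}$ on $J_i^*$ with $\sup_n R^{*n} E_{J_i^*, n}(\tilde u_2^{(i)}) < \infty$; set $\tilde u_2 = \tilde u_2^{(i)}$ on $J_i^*$ and $\tilde u_2 = 0$ on every non-boundary class, and put $\tilde u = \tilde u_1 + \tilde u_2$, so that $\tilde u|_{V_0} = u$.

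The energy $E_n[\tilde u]$ is then estimated edge by edge in $V_n$. Edges inside a boundary class $J_i^*$ involve only $\tilde u_2^{(i)}$ (since $\tilde u_1$ is constant there) and contribute $E_{J_i^*, n}(\tilde u_2^{(i)})$, bounded by property (i); edges inside a non-boundary class contribute nothing, because $\tilde u_1$ and $\tilde u_2$ are both constant (and equal to zero) there; edges between two different non-boundary classes involve only $\tilde u_1$ and are absorbed in $E_n^\sim[\tilde u_1]$; property (B) together with Lemma \ref{th3.3} rules out edges between two different boundary classes. The remaining case is an interface edge $(x,y)$ with $x$ in a boundary class $J_i^*$ and $y$ in a non-boundary class $I$, whose contribution obeys
\begin{equation*}
|\tilde u_1(J_i) - \tilde u_1(I) + \tilde u_2^{(i)}(x)|^2 \leq 2|\tilde u_1(J_i) - \tilde u_1(I)|^2 + 2|\tilde u_2^{(i)}(x)|^2;
\end{equation*}
the first summand is again absorbed into $E_n^\sim[\tilde u_1]$, so the task reduces to establishing, with the sum running over interface vertices $x \in J_i^{(n)}$,
\begin{equation*}
\sum_{x} |\tilde u_2^{(i)}(x)|^2 \leq C R^{*-n}.
\end{equation*}

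Controlling this interface sum is the main obstacle, and it is where the structural inputs of the proposition come in. I plan to exploit two ingredients: first, Theorem \ref{th3.2} identifies $\bar J_i^*$ as the attractor of a graph directed system of dimension $\gamma_i$, giving an upper bound of order $\rho^{-\gamma_i n}$ on the number of interface vertices at level $n$; second, the extension furnished by property (i) may be taken to be the piecewise harmonic extension of $u_2|_{J_i}$ on each finite sub-network $J_i^{(n)}$, which obeys both the maximum principle (so $|\tilde u_2^{(i)}|$ is uniformly bounded by $\max_{J_i}|u_2|$) and a Cauchy--Schwarz bound along paths that involves $E_{J_i^*, n}(\tilde u_2^{(i)}) \leq C R^{*-n}$. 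Taking the minimum of the pointwise and the path bounds at each interface vertex and summing against the dimension count should deliver the required $O(R^{*-n})$ estimate. Once this single-level bound is secured, iterating as in the proof of Proposition \ref{th2.6} promotes it to the uniform bound $\sup_{n\geq 0} R^{*n} E_n[\tilde u] < \infty$, and the proof is complete.
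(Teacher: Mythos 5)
Your overall architecture (handle the boundary classes with (i), handle the class-constant part with (ii) through the quotient network, and check that property (B) kills boundary-to-boundary edges) parallels the paper, but the decomposition $u=u_1+u_2$ with $\tilde u_2\equiv 0$ on all non-boundary classes breaks down exactly at the interface estimate you single out, and that estimate is in fact false, not merely unproved. The number of level-$n$ interface vertices of a non-trivial boundary class $J_i^*$ grows like $\rho^{-\gamma_i n}$ with $\gamma_i=\dim_H\bar J_i^*$, while $|\tilde u_2^{(i)}|$ stays of order one along most of $J_i^{(n)}$ whenever $u$ is non-constant on $J_i$; hence $\sum_x|\tilde u_2^{(i)}(x)|^2$ is of order $\rho^{-\gamma_i n}$, which grows, rather than $O(R^{*-n})$, which decays. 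Concretely, on the Sierpinski sickle with $J_1=\{p_1,p_3\}$, $u(p_1)\neq u(p_3)$, and the Lipschitz extension from Lemma \ref{th3.4}, about $7^n$ cells meet the segment $\overline{p_1p_3}$ at level $n$ and the values of $\tilde u_2^{(1)}$ spread over a fixed interval, so the interface sum is $\asymp 7^n$ while you need $\lesssim 7^{-n}$; then $R^{*n}E_n[\tilde u]\gtrsim 49^n\to\infty$. Neither ingredient you propose can repair this: the maximum-principle bound gives only $O(1)$ per vertex, and the Cauchy--Schwarz path bound gives $L_x\,E_{J,n}\lesssim \rho^{-n}R^{*-n}$, which is again $O(1)$ per vertex (since $R^*\leq\rho^{-(2-\gamma)}$ in the relevant range), so no combination of the two beats the count $\rho^{-\gamma_i n}$. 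The failure is intrinsic to setting $\tilde u_2=0$ off the boundary classes: that choice creates order-one jumps across every interface edge.

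The paper avoids this by never separating a class-constant part: in its Step 2, the new vertices of a boundary cell $V_\omega$ that do not lie in the boundary class $J$ are assigned the value $\min_{z\in V_\omega\cap J}u(z)$, i.e.\ a value of $u$ taken on $J$ \emph{inside the same cell}. Each interface difference is then a local oscillation of $u$ within $J$ at scales $n$ and $n-1$, and a path argument (the estimate \eqref{eq4.7}) absorbs the whole interface sum into $C\sum_J\big(E_{J,n}(u)+E_{J,n-1}(u)\big)$, which is $O(R^{*-n})$ by condition (i). Condition (ii) is used only afterwards, for the cells hanging off the boundary region (the family ${\mathcal N}$), where the quotient-harmonic extension decays at the strictly better rate $\min_{i\neq j}R^\sim(J_i,J_j)-\varepsilon>R^*$, with initial energy on each such cell again controlled by the boundary-class energy. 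If you want to keep your two-piece decomposition, you would have to let the ``correction'' part propagate the local values of $u$ on $J$ into the adjacent non-boundary vertices (essentially re-deriving the paper's Step 2) rather than setting it to zero there; as written, the proposal has a genuine gap at its central estimate.
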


\medskip
\noindent {\bf Remark}. A sufficient condition for the existence of the $u$ in condition (i) is provided in Lemma \ref{th3.4}.  The main idea behind condition (ii) is to use the equivalent class (i.e., shorting in the sense of electrical network) to get rid of the small $R_n(p,q)$,  and reduces to an expression analogous to Theorem \ref{th4.1}(i). An example of this is the Sierpinski sickle in Section 4.

\bigskip

\begin{proof}
We let ${\mathcal B}_n$ (${\mathcal B}_*$) denote the family of  boundary classes in $V_n$ ($V_*$ respectively), and let $B_n$ ($B_*$ respectively) be the union of the boundary classes. For $u\in \ell (V_0)$, we show that  $u$ can be extended to $V_*$, then to $K$, and has bounded $B^{\sigma^*}_{2, \infty}$-norm.
This will imply $B^{\sigma^*}_{2, \infty}$ is dense in $C(K)$ by  a similar argument (piecewise extension) as in the last paragraph of Proposition \ref{th2.6}.  We will prove the assumption in three steps.  (See Figure \ref{fig4}, note that the SG there is just for simplicity for illustration, it does not satisfies condition (ii))

\medskip

{\bf Step 1}. We  use (i) to extend $u$ to the  $J \in {\mathcal B}_*$  such that
$\sup_{J\in {\mathcal B}_*}\sup_{n\geq 0 }R^{*n}E_{J,n}(u)  \leq M$ for some $M>0$.  Hence $u$ is  defined on $B_* \subset V_*$. We also define $u$ on $V_1\setminus (V_0\cup B_1)$ to be $\min_{z\in V_0} u(z)$.

\medskip
 Our main task is to extend $u$ to the rest of $V_*$ and has bounded energy. We call $V_\omega$ a boundary cell if   $V_\omega \cap B_* \not = \emptyset$, and a non-boundary cell otherwise.

\vspace {0.1cm}

{\bf Step 2}.  We use induction  on  $|\omega|\geq 2$ to define $u$ on the boundary cells $V_\omega$ as well as  $V_{\omega i}, 1\leq i \leq  N$,  so that $u$ is  constant on the  equivalent classes. For this, suppose we have defined  such $u$ on boundary cells $V_\omega$, $|\omega|=n-1$. We carry out the induction in two steps. For a non-boundary cell $V_{\omega i}$ that has an equivalent class $I$ intersects $V_\omega$, we assign $u$ on $I$ to take the value $I \cap V_{\omega}$. Next  by property (B),  $V_{\omega} ( \subset V_n)$ intersects a unique boundary class, say $J$ (Lemma \ref{th3.3}). We assign $u$  to be $\min_{z\in V_{\omega}\cap J} u(z)$ on the rest of the  vertices in $V_{\omega i}$, $1\leq i \leq N$ that are not yet defined.

\vspace {0.1cm}

{\bf Step 3}.  Let
$
{\mathcal N}= \{\omega: \ V_\omega \hbox { is non-boundary cell},   V_{\omega^-}  \hbox {
\ is boundary cell} \},
 $
 (See Figure \ref{fig4} for the dotted cells.) In Step 2, we have already defined the values of $u$ on these cells. To complete the construction, we continue to define $u$ on $V_* \cap K_\omega$. By using assumption (ii) and (\ref{eq3.2}) on $V_{\omega}^\sim$, for small enough $\varepsilon>0$ such that $R^*<\min_{i\neq j} R^{\sim}(J_i,J_j)-\varepsilon$, we can choose $N$ big enough so that the harmonic extension of $u$ from $V_{\omega}$ to $V_{n+N}\cap K_{\omega}$ satisfies $$
  \big({\min}_{i\neq j} R^{\sim}(J_i,J_j)-\varepsilon\big)^N E_N(u\circ F_{\omega})\leq E_0(u\circ F_{\omega}),
   $$
and $u$ is constant on the equivalent classes in $V_{\omega + N}$.  By repeating the harmonic extension of $u$ on $V_{n+2N}\cap K_{\omega},V_{n+3N}\cap K_{\omega},\cdots$, we have $\sup\limits_{k\geq0}\Big(\min\limits_{i\neq j} R^{\sim}(J_i,J_j)-\varepsilon\Big)^{kN}
   E_{kN}(u\circ F_{\omega})\leq E_0(u\circ F_{\omega})$, and consequently by Corollary \ref{th2.3},
\begin{equation}\label{eq4.5}
\sup\limits_{k\geq0}\Big(\min\limits_{i\neq j} R^{\sim}(J_i,J_j)-\varepsilon\Big)^k E_k(u\circ F_{\omega})\leq CE_0(u\circ F_{\omega}).
\end{equation}
This completes the construction of $u$ on $V_*$.
\begin{figure}[h]\label{fig4}
\centering
\textrm{
\scalebox{0.25}[0.25]{\includegraphics{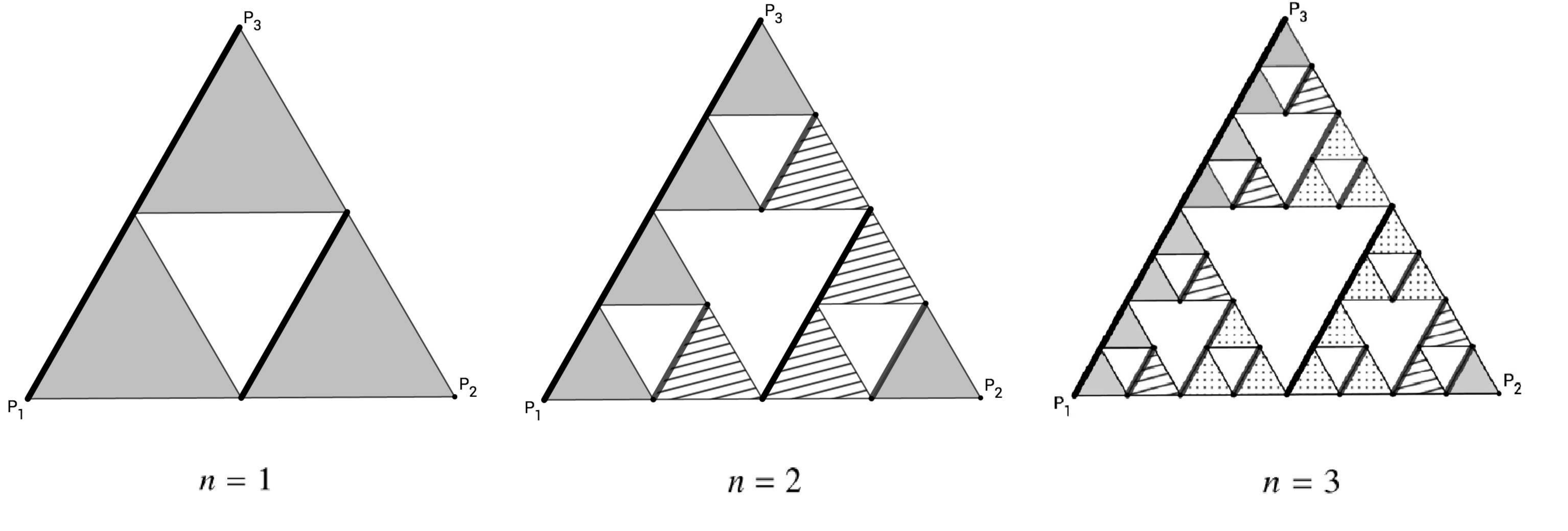}}
}
\caption{ Let $J_1 =\{p_1, p_3\}, J_2 = \{p_2\}$ on $V_0$. For $n = 1,2,3$, the  vertices in heavy line segments are the equivalent classes, and the grey cells are the boundary cells, Step 2 determines $u$ on the new vertices of the grey cells  and the lined cells; Step 3 determines $u$ in the dotted cells.}
\end{figure}

Finally we  estimate the energy $E_n(u)$. Note that $\sum_{J \not \in {\mathcal B}_n}E_{J, n} =0$ as $u$ is constant on such $J$ by construction; also for $n\geq1$, $\sum_{J,J' \in {\mathcal B}_n, J \not = J'} E_{J, J',n} =0$ because elements in $J$ and $J'$ have zero conductance (by Lemma \ref{th3.3}). Hence we can write  $E_n(u)$ as
{\small \begin{equation}  \label {eq4.6}
E_n(u)=\sum\limits_{J\in {\mathcal B}_n }E_{J,n}(u)+\sum\limits_{J\in {\mathcal B}_n,  J'\not \in {\mathcal B}_n}E_{J,J',n}(u)+\sum\limits_{J \not \in {\mathcal B}_n,  J' \not \in {\mathcal B}_n}E_{J,J',n}(u).
\end{equation}}
For the second sum, we have
{\small \begin{align}
\sum\limits_{J\in {\mathcal B}_n, J'\not \in {\mathcal B}_n}E_{J,J',n}(u)
& = \sum\limits_{|\omega|=n,V_\omega\cap J\neq\emptyset, J\in {\mathcal B}_n }\ \sum\limits_{x\in V_\omega\cap J}\ \sum\limits_{y\in V_\omega\cap J', J'\not \in {\mathcal B}_n}(u(x)-u(y))^2.\notag\\
&\leq \ C\sum\limits_{\ |\omega|=n,V_\omega\cap J\neq\emptyset, J\in {\mathcal B}_n}\sum\limits_{x\in V_\omega\cap J}\left(u(x)-u(x')\right)^2 \notag \\
 &\leq \ C'\sum\limits_{\ |\omega|=n,V_\omega\cap J\neq\emptyset, J\in {\mathcal B}_n}\left(\sum\limits_{x,z\in V_\omega\cap J}\left(u(x)-u(z)\right)^2+\sum\limits_{x,z\in V_{\omega^-}\cap J}\left(u(x)-u(z)\right)^2\right)\label{eq4.7}\\
&\leq \ C''\sum\limits_{ J\in {\mathcal B}_n}E_{J,n}(u)+E_{J,{n-1}}(u),\notag
\end{align}}
where $u(y)$ takes value $ {\min}_{z\in V_{\omega^{-}} \cap J} u(z) =u(x')$ for some $x' \in V_{\omega^{-}} \cap J$; the second inequality follows by choosing a path on $J\cap V_{n}\cap K_{\omega^{-}}$ from $x$ to some point $z$ in $J\cap V_{\omega^-}$, this is controlled by the first term in \eqref{eq4.7}, and then connecting $z$ and $x'$ in $J\cap V_{\omega^{-}}$ which is controlled by the second term in \eqref{eq4.7}.

\vspace {0.1cm}

For the last sum in \eqref{eq4.6},  as $J, J' \not \in {\mathcal B}_n$, by the construction, $u$ is constant on $J$ and on $J'$.  It follows from \eqref{eq4.5}  that
{\small \begin{align}
\sum\limits_{J \not \in {\mathcal B}_n, J' \not \in {\mathcal B}_n}E_{J,J',n}(u)\ &=\ \sum\limits_{J \not \in {\mathcal B}_n, J' \not \in {\mathcal B}_n}E^\sim_{J,J',n}(u)=\ \sum\limits_{\omega\in {\mathcal N},|\omega| =k\leq n}E_{n-k}(u\circ F_{\omega})\notag\\
&\leq C\sum\limits_{\omega\in {\mathcal N},|\omega|=k\leq n}\Big(\frac{1}{\min\limits_{i\neq j} R^{\sim}(J_i,J_j)-\varepsilon}\Big)^{n-k}E_0(u\circ F_{\omega}).\label{eq4.8}
\end{align}}
For the  $\omega$ in the sum,  $V_{\omega^-}\cap J\neq \emptyset$ for some boundary class $J$,
then by the construction, the values of $u$ on $V_k\cap K_{\omega^-}$ are defined from the values of $u$ on $J\cap K_{\omega^{--}}$, and hence
\begin{equation*}
E_0(u\circ F_{\omega})\leq E_1(u\circ F_{\omega^-})\leq CE_{k,J\cap K_{\omega^{--}}}(u).
\end{equation*}
Thus (\ref{eq4.8}) is smaller than
\begin{equation*}
C'\sum\limits_{J \in {\mathcal B}^*}\sum\limits_{k=0}^n\Big(\frac{1}{\min\limits_{i\neq j} R^{\sim}(J_i,J_j)-\varepsilon}\Big )^{n-k}E_{k,J}(u).
\end{equation*}

 To conclude, the above two estimates and (ii) imply that \eqref {eq4.6} satisfies
{\small \begin{equation*}
R^{*n}E_n(u)\leq C\sup\limits_{0\leq k\leq n}R^{*k}E_{J,k}(u)+C'\sum\limits_{J \in {\mathcal B}^*}\sum\limits_{k=0}^nR^{*k}E_{k,J}(u)+E_0(u)<\infty.
\end{equation*}}
Therefore $R^{*n}E_n(u)$ is uniformly bounded on $n$.   This implies that  $u\in B^{\sigma^*}_{2, \infty}$ and completes the proof.
\end{proof}

\bigskip

In the following, we will consider the second critical exponent $\sigma^\#$.

\medskip

\begin{theorem}\label{th4.3}
Let $K$ be a p.c.f. self-similar set with an IFS satisfying (\ref{eq2.1}). Assume $R(p, q)\ (>1), \ p, q \in V_0$ exist. Let
\begin{align*}
R^\#=\min\big \{s:\ & \text{ $\forall$  $p\not = q$ in $V_0$, $\exists$ a chain $p=p_1,p_2,\cdots,p_m=q$ in $V_0$}\\
               & \text{ $\ni$ $R(p_i,p_{i+1})\leq s$,  $1\leq i\leq m-1$}\big \}.
\end{align*}
Suppose there is a compatible relation  on $V_0$ such that
for any small $\varepsilon >0$,
\begin{equation}\label{eq4.9}
\lim_{n\to \infty} \ \frac{(R^\#)^{(1-\varepsilon )n}} {R_n^\sim(J_i, J_j)}= 0,   \quad \forall \ J_i, J_j \in V_0^{\sim},  \  J_i \not = J_j.
\end{equation}
Then $\sigma^\#=\frac12\left(\frac{\log R^\#}{-\log\rho}+\alpha\right)$\ .
\end{theorem}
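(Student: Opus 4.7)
The plan splits into two directions. For the upper bound $\sigma^\#\le\sigma_0:=\frac12\bigl(\tfrac{\log R^\#}{-\log\rho}+\alpha\bigr)$, I would show that any $u\in B^{\sigma}_{2,\infty}$ with $\sigma>\sigma_0$ must be constant on $K$, using the defining chain property of $R^\#$ and the trace formula \eqref{eq4.1}. For the lower bound $\sigma^\#\ge \sigma_0$, I would combine the quotient trace formula \eqref{eq3.2} with the hypothesis \eqref{eq4.9} to build a non-constant function in $B^\sigma_{2,\infty}$ for every $\sigma<\sigma_0$, mirroring the extension argument used in Proposition \ref{th2.6} but carried out one level up on the quotient network.

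For the upper bound, pick $\sigma=\sigma_0+2\varepsilon$ and $u\in B^\sigma_{2,\infty}$. By Proposition \ref{th1.1}, $\rho^{-(2\sigma-\alpha)n}E_n[u]$ is bounded; combined with \eqref{eq4.1} this yields
\[
\sum_{i\neq j}\frac{\rho^{-(2\sigma-\alpha)n}}{R_n(p_i,p_j)}|u(p_i)-u(p_j)|^2\le C.
\]
Fix $p\ne q$ in $V_0$ and, by definition of $R^\#$, pick a chain $p=p_1,\dots,p_m=q$ in $V_0$ with $R(p_i,p_{i+1})\le R^\#$. Since $R_n(p_i,p_{i+1})\le (R^\#)^{(1+\eta)n}$ for any small $\eta>0$ and large $n$, choosing $\eta$ small enough that $\rho^{-4\varepsilon}>(R^\#)^\eta$ forces each ratio above to blow up, giving $u(p_i)=u(p_{i+1})$; telescoping yields $u(p)=u(q)$, so $u$ is constant on $V_0$. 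Applying the same reasoning to $u\circ F_\omega$ (whose energies satisfy $E_n[u\circ F_\omega]\le E_{n+|\omega|}[u]$, so the chain hypothesis transfers) shows $u$ is constant on every $F_\omega(V_0)$. Two cells that meet in $K$ must share a vertex in some $V_m$, so these constants agree, and by the continuity embedding Proposition \ref{th2.2} (valid since $2\sigma>\alpha$) $u$ is constant on $K$.

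For the lower bound, fix $\sigma=\sigma_0-2\varepsilon$, so that $\rho^{-(2\sigma-\alpha)n}=(R^\#)^{(1-\varepsilon')n}$ for some $\varepsilon'>0$. Since the compatible relation has at least two classes in $V_0^\sim$, choose $u\in\ell(V_0^\sim)$ with distinct values on distinct classes. By the quotient trace formula \eqref{eq3.2}, the minimal extension $v_N$ of $u$ in $\ell(V_N^\sim)$ satisfies
\[
\rho^{-(2\sigma-\alpha)N}E_N[v_N]=\sum_{i\neq j}\frac{(R^\#)^{(1-\varepsilon')N}}{R_N^\sim(J_i,J_j)}|u(J_i)-u(J_j)|^2,
\]
which tends to $0$ as $N\to\infty$ by \eqref{eq4.9}. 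Fix $N$ so large that this quantity is dominated by $\sum_{i\neq j}|u(J_i)-u(J_j)|^2$, then iterate: on each $N$-cell $V_\omega$, use $u\circ F_\omega$ on $V_0$ as new boundary data (well defined on $V_0^\sim$ by compatibility of $\sim$, Definition \ref{de3.1}) and repeat the quotient extension. Consecutive extensions agree on shared vertices, so the function is well defined on $V_*$ and satisfies $\sup_k\rho^{-(2\sigma-\alpha)kN}E_{kN}[u]<\infty$. By Corollary \ref{th2.3} and Proposition \ref{th2.4}, $u$ extends continuously to $K$ and lies in $B^\sigma_{2,\infty}$, and it is non-constant by construction.

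The two points that will need the most care are: first, propagating ``constant on $V_0$'' to ``constant on $K$'' in the upper bound, which relies on the cell-level inequality for $E_n[u\circ F_\omega]$ and on the p.c.f.\ connectivity of $K$; and second, verifying that the iterative quotient extension in the lower bound is consistent across cells and respects the equivalence relation at every scale. The second point is the real engine of the argument: it is precisely the compatibility in Definition \ref{de3.1} together with the fact that \eqref{eq4.9} is assumed \emph{uniformly} over all pairs $J_i\ne J_j$ in $V_0^\sim$ that allows the quotient trace formula to be applied cell by cell and scale by scale with a uniform energy bound.
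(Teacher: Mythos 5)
Your proposal is correct and follows essentially the same route as the paper: the upper bound uses the chain definition of $R^\#$ together with the trace formula \eqref{eq4.1} to force $u(p)=u(q)$ (the argument of Theorem \ref{th4.1}), and the lower bound uses \eqref{eq4.9} with the quotient trace formula \eqref{eq3.2} and the iterative extension of Proposition \ref{th2.6} carried out on the quotient network. The details you add (propagating constancy from $V_0$ to all cells via $E_n[u\circ F_\omega]\le E_{n+|\omega|}[u]$, and checking that the cell-by-cell quotient extension respects the equivalence classes) are exactly the points the paper leaves implicit, and they are handled correctly.
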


\medskip
\begin{proof}  To prove $\sigma^\# \leq \frac12\left(\frac{\log R^\#}{-\log\rho}+\alpha\right)$, we consider $\sigma = \frac12\left(\frac{\log R^\#}{-\log\rho}+\alpha\right) +2\varepsilon$, we claim that for $u \in B^\sigma_{2, \infty}$, $u(p) = u(q)$ for all $p, q \in V_0$. Then the same argument apply to any $u\in \ell(V_n)$, and hence $u$ is a constant function.

\vspace {0.1cm}

For $\varepsilon >0$ and for $R_\varepsilon^{\#} = R^\# \cdot\rho^{-2 \varepsilon}$,
the definition of $R^\#$ implies that  there is a chain $p=p_1,p_2,\cdots,p_m=q$ such that
 $R(p_i,p_{i+1})\leq R^\#$ for $1\leq i\leq m-1$. Hence
$$
\frac{R^{\#n}_\varepsilon}{R_{n}(p_i,p_{i+1})}\geq \rho^{-\varepsilon n} \rightarrow\infty\text{ as } n\rightarrow\infty. \quad \forall \ 1\leq i\leq m-1.
$$
 By the same argument as in Theorem \ref{th4.1}, we conclude that $u(p_i) = u(p_{i+1})$ for all $1\leq i\leq m-1$, so that $u(p)= u(q)$, and the claim follows.

\vspace {0.2cm}
Next we consider $\sigma <\frac12\left(\frac{\log R^\#}{-\log\rho}+\alpha\right)$ such that $2\sigma -\alpha>0$.  We show that each $u \in \ell(V_0^\sim)$ (equivalently, $u \in \ell(V_0)$ and  is constant on each $J_i$) can be extended to be in $B^\sigma_{2, \infty}$. Then  $B^\sigma_{2, \infty}$ contains non-constant function, and by definition,   $\frac12\left(\frac{\log R^\#}{-\log\rho}+\alpha\right) \leq \sigma ^\#$.

 \vspace {0.1cm}
 The proof of the statement is the same as the corresponding part in Theorem \ref{th4.1}.  By  (\ref{eq4.9}), we can find an  $N=N(\sigma)$ such that for all distinct $i,j$,
\begin{equation}\label{eq4.10}
\rho^{-(2\sigma-\alpha)N}\leq R^\sim_N(J_i,J_j), \quad  \forall \ J_i, J_j \in V_0^\sim, \ \ J_i \not = J_j,
\end{equation}
then by \eqref{eq3.2} and Proposition \ref{th2.6}, we obtain $u \in B^\sigma_{2, \infty}$.
\end{proof}

\medskip

\noindent {\bf Remark.} It follows from the above argument that if  $ u \in \ell (V_n^\sim)$, then $u$ can be extended to be in $B^\sigma_{2, \infty}$, $\sigma < \sigma^\#$ and $u$ is constant on each equivalent class $J \in V_*^\sim$.

\bigskip

\begin{proposition}\label{th4.4}  With the same assumption as in Theorem \ref{th4.3}, Suppose that
\begin{equation}\label{eq4.11}
\max\{\dim_H \bar J :\ J \hbox {\rm{ a boundary class}}  \}< \dim_H K.
\end{equation}
 Then for $\sigma < \sigma^\#$, $B^\sigma_{2, \infty}$ is dense in $L^2(K,\mu)$. If further, for any $u$ on $\ell(V_0^\sim)$, there is an extension of $u$ on $V_*^\sim$ such that \
\begin{equation}\label{eq4.12}
\sup_{n\geq0}R^{\#n}E_{n}(u)<\infty.
\end{equation}
Then $B^{\sigma^\#}_{2, \infty}$ is dense in $L^2(K,\mu)$.
\end{proposition}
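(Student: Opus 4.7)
The plan is to approximate $L^2$ functions by the dense subspace of $n$-cell cylinder functions
\[
v = \sum_{|\omega|=n} c_\omega \mathbf{1}_{K_\omega}, \qquad n\geq 0,
\]
and then approximate each such $v$ by an element of the appropriate Besov space, the error being controlled by the $\mu$-measure of a ``bad set'' that vanishes thanks to assumption~\eqref{eq4.11}.

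For the first claim, fix $\sigma<\sigma^\#$ and let $v$ be as above. For $N\geq n$, I would define a function $u_N$ on $V_N^\sim$ as follows: if the closure $\bar J^{(N)}$ of an equivalence class $J^{(N)}\in V_N^\sim$ lies inside a single $n$-cell $K_\omega$, set $u_N(J^{(N)})=c_\omega$; otherwise---necessarily because $J^{(N)}\subset J^*$ for some $J^*\in V_*^\sim$ whose closure spans multiple $n$-cells---assign a value in $[\min_\omega c_\omega,\max_\omega c_\omega]$. By the remark following Theorem~\ref{th4.3}, $u_N$ extends to an element of $B^\sigma_{2,\infty}$ that is constant on each $\bar J$ ($J\in V_*^\sim$), and the harmonic-extension construction (as in Proposition~\ref{th2.6}) guarantees $\|u_N\|_\infty\leq \|v\|_\infty$. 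It then remains to estimate $\|u_N-v\|_2^2\leq 4\|v\|_\infty^2\,\mu(B_N)$, where $B_N$ is the union of the $N$-cells meeting either a vertex of $V_n$ or the closure of some equivalence class in $V_*^\sim$ that spans multiple $n$-cells. Only finitely many such spanning classes exist (boundary classes arise from the finite set $V_0^\sim$, and non-boundary ``coarse'' classes are generated at levels $\leq n$); by Theorem~\ref{th3.2} each has closure of Hausdorff $=$ box dimension $\gamma<\alpha$ under hypothesis~\eqref{eq4.11}. A standard box-count bounds the number of $N$-cells meeting any one of these closures by $O(\rho^{-\gamma N})$, and each $N$-cell has $\mu$-measure $\asymp \rho^{\alpha N}$, so $\mu(B_N)=O(\rho^{(\alpha-\gamma)N})\to 0$ as $N\to\infty$, yielding $\|u_N-v\|_2\to 0$.

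For the critical case $\sigma=\sigma^\#$, the same approximation scheme applies provided one can control the extensions at the critical exponent, which is exactly the role of hypothesis~\eqref{eq4.12}. Given $v$ as above, I would first prescribe values on the finite set $V_n$ (for example, at each shared vertex $p\in V_n$ take $u(p)$ to be the mean of $c_\omega$ over those $\omega$ with $p\in K_\omega$); this yields initial data on every $F_\omega(V_0^\sim)$ for $|\omega|=n$, and applying \eqref{eq4.12} to each scaled copy produces an extension $u^\omega$ on $F_\omega(V_*^\sim)$ with $\sup_k R^{\#k} E_k(u^\omega\circ F_\omega)<\infty$. Because the primal energy $E_k(u)$ for $k\geq n$ decomposes as $\sum_{|\omega|=n}E_{k-n}(u^\omega\circ F_\omega)$ with no cross-terms between distinct $n$-cells, one obtains $\sup_{k\geq n}R^{\#k}E_k(u)<\infty$, hence $u\in B^{\sigma^\#}_{2,\infty}$ by Proposition~\ref{th1.1}; the $L^2$-estimate from the first part then applies verbatim.

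The main obstacle is the careful bookkeeping of equivalence classes in $V_N^\sim$: one must distinguish those lying in a single $n$-cell (which can then be assigned the correct cell-value $c_\omega$) from those spanning multiple $n$-cells (which contribute to $B_N$). This is precisely the graph-directed structure exposed by Theorem~\ref{th3.2}, together with the box-dimension estimate sketched in the appendix. A secondary technical point, arising only in the critical case, is the compatibility of cell-wise extensions at shared boundary vertices; this is handled by fixing values on $V_n$ first, before invoking \eqref{eq4.12} on each cell.
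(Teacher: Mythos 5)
Your argument for $\sigma<\sigma^\#$ is essentially sound and runs parallel to the paper's: the paper approximates $f\in L^2$ by a continuous $g$ and defines $g_N\in\ell(V_N^\sim)$ by taking suprema over the $V_N$-classes, the bad set $Q_{\delta,N}$ being the $N$-cells meeting classes of diameter $\geq\delta$; you instead approximate by level-$n$ simple functions and take as bad set the $N$-cells meeting $V_n$ or a class not contained in a single $n$-cell. In both versions the extension is supplied by the Remark after Theorem \ref{th4.3}, and the smallness of the bad set comes from \eqref{eq4.11} together with the dimension statement of Theorem \ref{th3.2} (your box-count $O(\rho^{-\gamma N})$ should really be $O(\rho^{-(\gamma+\varepsilon)N})$ for small $\varepsilon>0$, which is harmless since $\gamma<\alpha$ is strict). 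So the first half is a correct variant of the paper's argument.

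The critical case $\sigma=\sigma^\#$ has a genuine gap. You prescribe data only on $V_n$, where $n$ is the level of the fixed simple function $v$, and then extend cell-wise by \eqref{eq4.12}. This does give $\sup_{k}R^{\#k}E_k(u)<\infty$ and hence $u\in B^{\sigma^\#}_{2,\infty}$, but the claim that the $L^2$-estimate from the first part ``applies verbatim'' is not justified. In the first part the error was small because you assigned the exact values $c_\omega$ to all level-$N$ classes contained in a single $n$-cell and let $N\to\infty$, so $u_N=v$ off the shrinking set $B_N$. In your critical construction there is no such free parameter: the boundary data at $F_\omega(V_0)$ are averages of the neighboring $c_{\omega'}$, and hypothesis \eqref{eq4.12} gives no pointwise control of the extension inside $K_\omega$ --- it need not be close to $c_\omega$ on any substantial part of $K_\omega$, so $\|u-v\|_2$ is only bounded by $O(\|v\|_\infty)$ rather than made small (already on cells adjacent to a jump of $v$ the discrepancy is of the order of the jump on a set of measure comparable to the cell, with no parameter to shrink it). The repair is to run your first-part scheme at level $N$ and invoke \eqref{eq4.12} there: define $u_N$ on $V_N^\sim$ exactly as in the first part, extend it cell-wise on the $N$-cells by \eqref{eq4.12} (taking the constant extension where the data are constant), note that $\sup_{k\leq N}R^{\#k}E_k(u_N)$ is trivially finite while for $k>N$ the cell-wise decomposition gives the uniform bound, and then let $N\to\infty$ so that the level-$N$ $L^2$ estimate applies. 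This is what the paper's terse ``the same argument shows'' means: the \eqref{eq4.12}-extension must be applied to the level-$N$ quotient data, not to data at the fixed level $n$ of the approximating simple function.
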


\medskip
\begin{proof} For any $\varepsilon >0$ and for any  $f\in L^2(K, \mu)$, let $g\in C(K)$ satisfying $||g-f||_2 \leq \varepsilon$.
Let $\delta>0$ be such that for  $x,y\in K$ with $|x-y|\leq\delta$, $|g(x)-g(y)|\leq\varepsilon$. For $\delta >0$,  we let
$$
Q_{\delta,n}={\bigcup}_{|\omega|=n}\{ K_\omega: \ K_\omega \cap J \not = \emptyset,\  {\rm diam} (J) \geq \delta ,  \ J \in V_*^\sim \},
$$
and let $Q_\delta = \bigcap_{n=1}^\infty Q_{\delta,n}$. Then $Q_\delta$ is union of the closures of finitely many equivalent classes. In view of Theorem \ref{th3.2}, $\dim_H(Q_\delta)$ is the maximum of  the Hausdorff dimensions of the boundary classes. By \eqref{eq4.11}, $\mu (Q_\delta) =0$,
 we can find $N: =N(\varepsilon,\delta)$ satisfies $\mu(Q_{\delta, N}) < \varepsilon$.  Define $g_N$ on $V_N$ such that
\begin{equation*}
g_N(x) =
\sup_{y\in V_N\cap J}g(y), \quad \hbox {for}  \ x \in V_N\cap J, \ \ J \in V_*^\sim.
\end{equation*}
Then $g_N\in \ell(V_N^\sim)$. In view of the above Remark, we can extend $g_N$ to  $B^\sigma_{2, \infty}$,    which  is continuous and is constant on each equivalent class of $V_*^\sim$,  still denote this extension by $g_N$.  Note  that  $g_N$ takes maximum and minimum values on $V_N$.
It follows that
  \begin{align*}
   ||g_N-g||^2_2
   &=\int_{Q_{\delta,N}}|g_N-g|^2d\mu+\int_{K\setminus Q_{\delta,N}}|g_N-g|^2d\mu \\
   & \leq \left(2||g||_\infty\right)^2\cdot\mu(Q_{\delta,N})+(2\varepsilon)^2\cdot\mu(K)
   \ \leq\ c\varepsilon^2
\end{align*}
for some $c>0$ (independent of $g$). Hence $||g_N-f||_2  \leq  ||g_N-g||_2+ ||g-f||_2 \leq  (c+1)\varepsilon$. The denseness of $B^{\sigma}_{2, \infty}$ in $L^2(K, \mu)$ follows.

\medskip
 For the second part, the  assumption \eqref{eq4.12}  on $V_*^\sim$ implies that we can extend any $u\in \ell(V_0^{\sim})$ to be in $\ell(V_*^{\sim})$ and in $B^{\sigma^\#}_{2, \infty}$, so that the same argument shows that
$B^{\sigma^\#}_{2, \infty}$ is dense in $L^2(K, \mu)$.
\end{proof}

\bigskip
\bigskip

\section{\large\bf P.c.f. sets with inhomogeneous traces}
\bigskip

In this section, we will construct and study the two asymmetric p.c.f. sets as announced. We will assume  the primal energy on $V_n$, i.e.,   $E_n [u] = \sum_{x,y\in V_\omega ,\ |\omega| = n} |u(x)-u(y)|^2$.

\bigskip
 {\bf 1.  Eyebolted  Vicsek cross}.
In ${\Bbb R}^2$, let $\{p_1,p_2,p_3,p_4\}$ be the four vertices of the unit square $S$, and let $p_0$ be the center of $S$, that is,
$p_0=(0,0)$ and $p_1= (-1/2,-1/2), \ p_2 = (1/2,-1/2), \ p_3 = (1/2,1/2),\ p_4=(-1/2,1/2)$.
Divide $S$ into a mesh of sub-squares of size $1/9$, and pick $21$ sub-squares as shown in Figure \ref{fig5}.

\begin{figure}[th]
\textrm{
\begin{tabular}{cc}
\begin{minipage}[t]{1.7in} \includegraphics[width=1.7in]{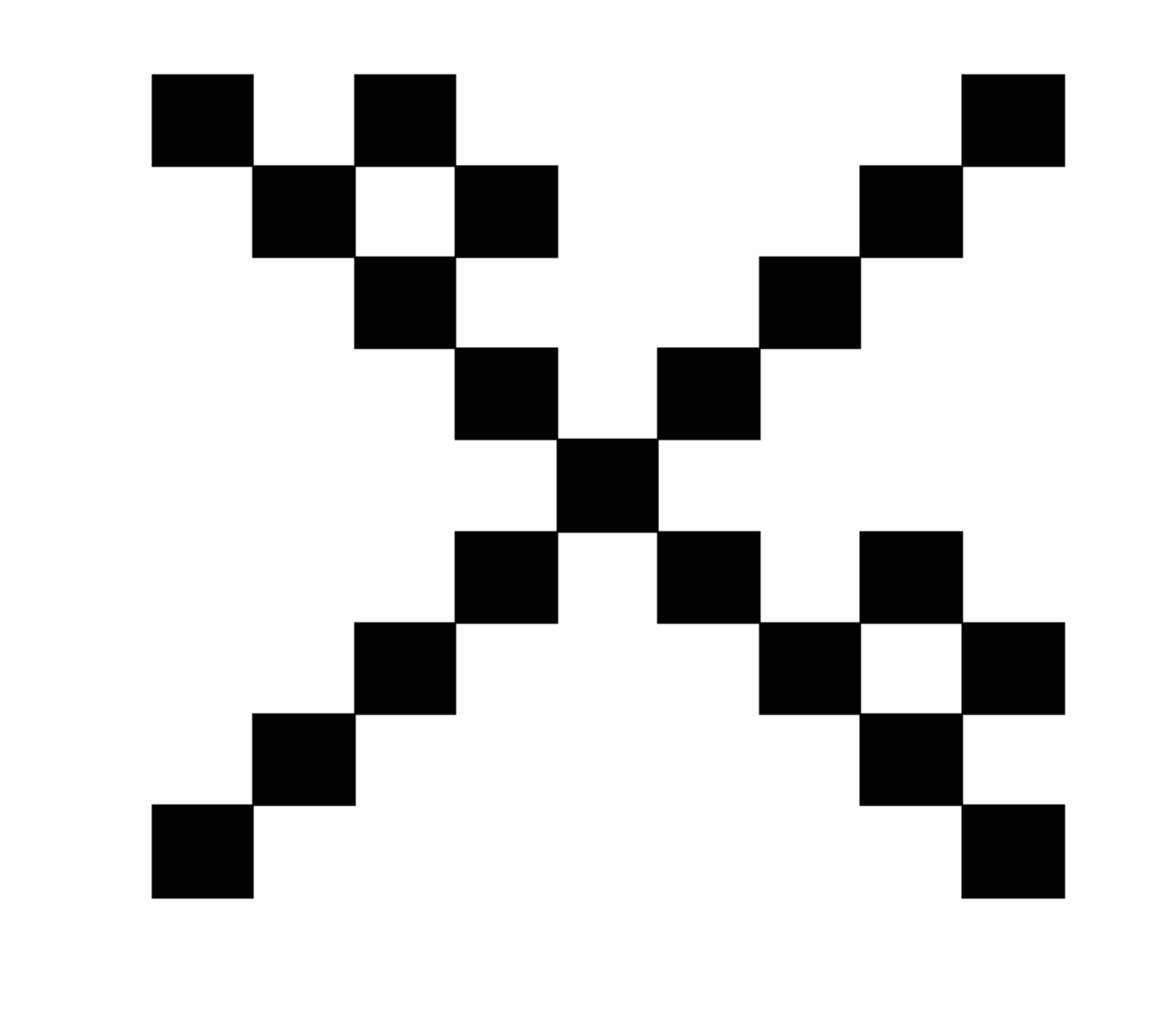}
 \end{minipage}  \qquad \quad
 \begin{minipage}[t]{1.6in}
\includegraphics[width=1.6in]{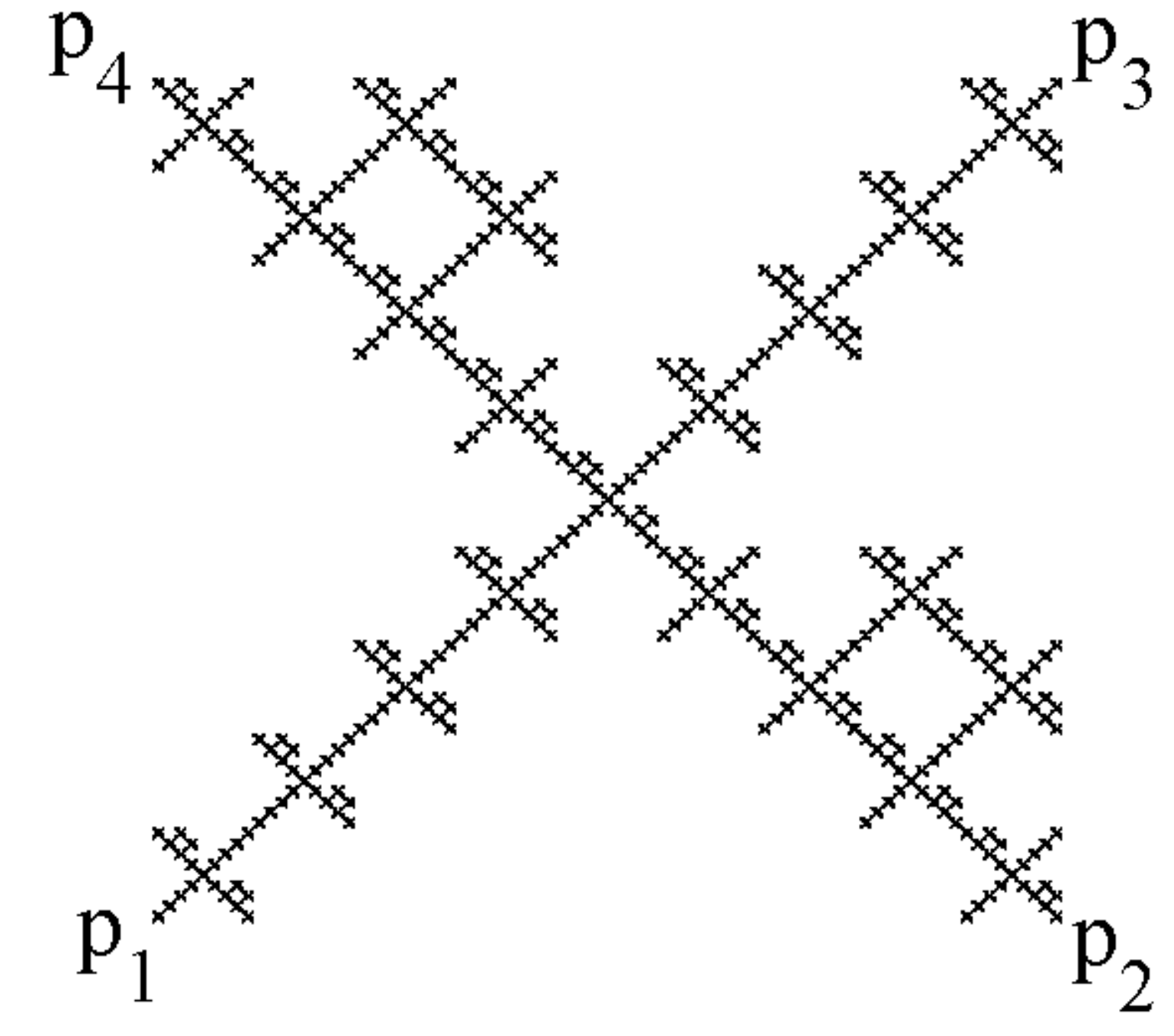}
\end{minipage} &
\end{tabular}
}\caption{The eyebolted Vicsek cross $K$}\label{fig5}
\end{figure}

\vspace {0.1cm}
Let $\{a_i\}_{i=1}^{21}$ be the center of these sub-squares. Let $\{F_i\}_{i=1}^{21}$ be the IFS on $\mathbb{R}^2$ with
$$
F_i(x) = \frac{1}{9}(x-a_i) + a_i,  \qquad 1\leq i \leq 21 \ ,
$$
where $F_i, 1\leq i\leq 9$ correspond to the $9$ sub-squares along $\overline {p_1p_3}$, and the other $F_i$ correspond to the other $12$ sub-squares; let $K$ be the unique nonempty compact set such that
$
K=\bigcup_{i=1}^{21}F_i(K).
$
Then  $\big(K,\{F_i\}_{i=1}^{21}\big)$ is a p.c.f. self-similar set with boundary $V_0=\{p_1,p_2,p_3,p_4\}$. We call this modified Vicsek cross an {\it  eyebolted Vicsek cross}. The Hausdorff dimension of $K$ is $\alpha=\log21/\log9$, and the self-similar measure with the natural weight is the  normalized $\alpha-$dimensional Hausdorff measure $\mu$ on $K$.

\medskip


%
%
In the  \begin{large}${\boxtimes}$\end{large}-X transform in Lemma \ref{th2.7}, we define the vertex set on the X-side by $V_0'= V_0 \cup \{p_0\}$ where $p_0$ is an added point in the center (see Figure \ref{fig2}), and let $V_n'= \bigcup_{|\omega |=n} F_\omega(V_0')$.
\medskip

\begin{lemma}\label{th5.1}
Suppose $(V'_1, \mathbf{\xi})$ is a network with resistance $\mathbf{\xi}=(a,b,c,d)$ on  the four edges of each subcell. Let $\mathbf{\Phi(\xi)} = (a', b', c', d')$ be the trace of $(V'_1, \mathbf{\xi})$ on $V'_0$ , then
\begin{equation}\label{eq5.1}
\mathbf{\Phi(\xi)}=
\left(5a+4c,\ 4b+3d+\varphi ({\mathbf \xi}), \ 4a+5c, \ 3b+4d+\varphi ({\mathbf \xi})\right).
\end{equation}
where $\varphi ({\mathbf \xi}) =  \frac{(b+d)(2a+2c+b+d)}{2(a+b+c+d)}$\ .
\end{lemma}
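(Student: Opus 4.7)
The plan is to reduce the network $(V_1',\xi)$ vertex by vertex to an effective $4$-armed star on $V_0'=\{p_0,p_1,p_2,p_3,p_4\}$, using only the series and parallel laws together with the $\Delta$-Y transform of \eqref{eq2.6}--\eqref{eq2.7}. After the $\boxtimes$-X transform of Lemma \ref{th2.7}, every subcell already contains an internal star from its centre to its four corners with arm resistances $(a,b,c,d)$.

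I would first compute $a'$ and $c'$. The nine diagonal subcells $F_1,\dots,F_9$ meet corner to corner along $\overline{p_1p_3}$ (the $p_3$-corner of $F_k$ equals the $p_1$-corner of $F_{k+1}$) and the centre of $F_5$ is $p_0$ itself. Each eyebolt is attached to this diagonal chain at a single vertex; from the viewpoint of a current injected at $p_0$ and withdrawn at $p_1$, every arm interior to the eyebolts is therefore dangling, and Kirchhoff's current law forces zero current on any such branch. For this particular circuit the eyebolts may thus be removed altogether, leaving a pure series chain: the $a$-arm of $F_5$ followed by, in each of $F_4,F_3,F_2,F_1$, a $c$-arm in series with an $a$-arm. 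Hence $a' = a + 4(a+c) = 5a + 4c$, and the analogous argument on the other side of $p_0$ gives $c' = 4a + 5c$.

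Next I would compute $b'$; the formula for $d'$ follows by the reflection symmetry across $\overline{p_1p_3}$, which interchanges the two eyebolts and the pair of arms $(b,d)$ while fixing $a$ and $c$. Each eyebolt consists of six cells that decompose into a two-cell ``shank'' attached to the diagonal at a single vertex together with a four-cell ``eye'' whose boundary contains $p_2$ (respectively $p_4$). Reading the sequence of arms off Figure \ref{fig5}, the path from $p_0$ through $F_5$, along the shank, and into the entry vertex $v$ of the eye is a pure series of four subcells whose arms sum to $b+3(b+d)=4b+3d$ (one $b$-arm in $F_5$ plus three corner-to-corner $(b+d)$-traversals). From $v$ to $p_2$ the eye then furnishes two arcs in parallel: a short arc through one cell (a $(b+d)$-traversal) and a long arc through the remaining three cells (two $(a+c)$-traversals plus one $(b+d)$-traversal, total $2(a+c)+(b+d)$). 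The parallel law gives
\[
\varphi(\xi)=\Bigl(\tfrac{1}{b+d}+\tfrac{1}{2(a+c)+(b+d)}\Bigr)^{-1}=\frac{(b+d)\bigl(2a+2c+b+d\bigr)}{2(a+b+c+d)},
\]
and adding this in series with the shank contribution produces $b'=4b+3d+\varphi(\xi)$. The symmetric identity $d'=3b+4d+\varphi(\xi)$ follows at once.

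The main obstacle is the combinatorial bookkeeping for $b'$: at every subcell on the route from $p_0$ into the eye one must identify exactly which of the four arms is traversed and verify that the remaining arms are indeed dangling relative to the $p_0$-to-$p_2$ circuit. Once the shank-plus-eye decomposition of the eyebolt is fixed from the figure, the identity is produced by the two elementary reductions above, and no explicit Kirchhoff solve is required.
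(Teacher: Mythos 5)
Your overall strategy---reduce the level-one network cell by cell with the series and parallel laws and obtain $d'$ from the reflection exchanging $p_2$ and $p_4$---is the same as the paper's one-line proof, and your computation of $a'=5a+4c$, $c'=4a+5c$, as well as the parallel-law algebra producing $\varphi(\xi)=\bigl(\tfrac{1}{b+d}+\tfrac{1}{2(a+c)+(b+d)}\bigr)^{-1}$, are correct. The difficulty is that the lemma's entire content is the structural bookkeeping for $b'$, and your reading of the eyebolt branch does not match the set in Figure \ref{fig5} and is internally inconsistent. With a two-cell shank the series portion from $p_0$ into the eye would be $b+2(b+d)=3b+2d$, not the required $4b+3d$; conversely, ``one $b$-arm plus three $(b+d)$-traversals'' followed by a four-cell eye would need seven subcells in the branch, not six. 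Moreover a four-cell eye whose boundary contains $p_2$ is impossible: $p_2$ is the outer corner of the extreme corner subcell of the $9\times 9$ mesh, and any corner-to-corner four-cell loop through that subcell would require subcells outside the unit square. The paper's own Lemma \ref{th5.3} (where $\bar J_2^*$ is the full segment $\overline{p_2p_4}$) shows that the whole anti-diagonal lies in $K$: the six subcells hanging off the centre cell toward $p_2$ are the four cells of the half-diagonal ending at $p_2$ together with two extra cells, and $p_2$ is the far endpoint of the chain, not a point on the eye.

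In the correct picture the eye is a two-cell bypass around an interior adjacent pair of chain cells: the parallel section runs between the centres of those two cells, with one arc their two inner half-arms, $b+d$, and the other arc their two outer half-arms plus full traversals of the two extra cells, $c+(a+b)+(a+d)+c=2a+2c+b+d$; this is exactly where $\varphi(\xi)$ comes from. The remaining series arms are the $b$-arm of $F_5$, the full $(b+d)$-traversals of the chain cells outside the bypassed pair, and the outer half-arms $d$ and $b$ of the bypassed pair, which total $4b+3d$. Your arc decomposition (``two $(a+c)$-traversals plus one $(b+d)$-traversal'') gives the same numerical sum but does not correspond to any traversal of the actual network, so the totals in your write-up agree with \eqref{eq5.1} only because they were fitted to it rather than read off the figure. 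As written, the derivation of the $b'$ and $d'$ entries therefore has a genuine gap; it would be repaired by redoing Step two of your argument with the chain-plus-bypass structure above (and, if you want full rigour, a sentence noting that the $p_2$- and $p_4$-branches meet the rest of $V_1'$ only at the off-diagonal corners of $F_5$, so the trace on $V_0'$ is indeed a star centred at $p_0$).
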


\medskip

\begin{proof}  The expression  of $\varphi ({\mathbf \xi})$ is obtained by applying the parallel law of the resistances to the two eyebolts on  $V_1'$ (see Figure \ref{fig6}). That $a'= 5a+4c$ is by applying the series law to the branch of $p_1$ to the center, and the same for $b', c', d'$ on the  other three branches.
\end{proof}
\begin{figure}[h]
\centering
\textrm{
\scalebox{0.15}[0.15]{\includegraphics{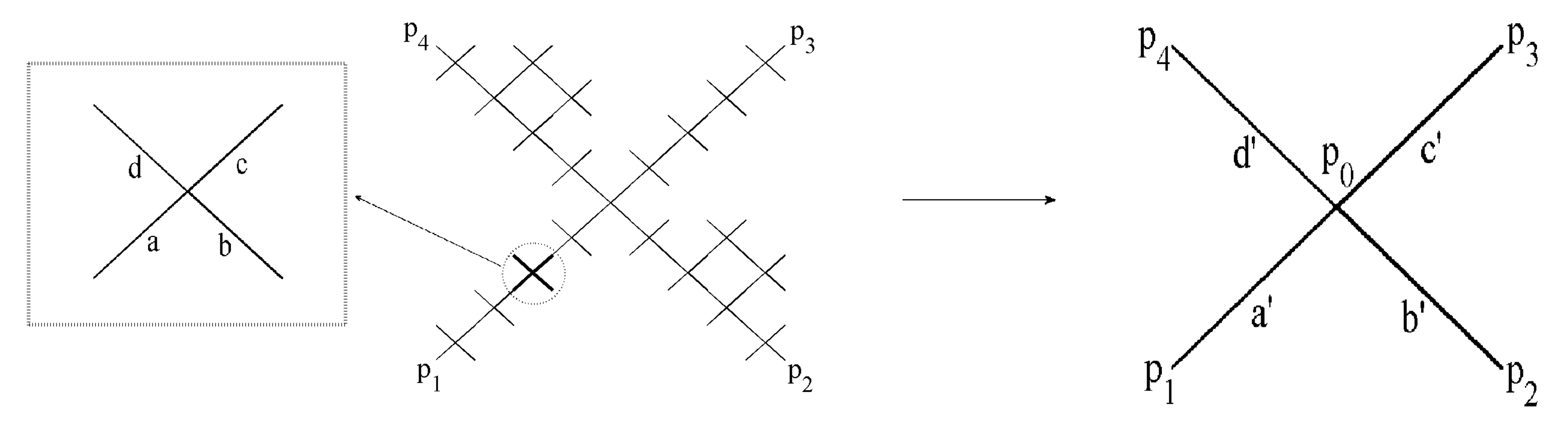}}
}
\caption{Network on $V_1'$ and its trace on $V_0'$}
\label{fig6}
\end{figure}

\medskip

For  $n\geq 0$, we let $G_n$ denote the network on $V_n$ with unit resistance on any two vertices of each subcell $V_\omega$ (i.e., there are $6$ edges  on $ V_\omega$ as in the left picture in Figure \ref{fig2}).

\begin{proposition}\label{th5.2}
Let $R_n(p,q), \ p,q\in V_0$ be the  trace of $G_n$ on $V_0$, then we have

\vspace {0.1cm}
\ (i) $R_n(p_i,p_{i+1})=\frac{a_n+b_n}{2}\asymp a_n$ \  for \ $1\leq i\leq 4, \ p_5 =p_1$;

\vspace {0.1cm}

(ii) $R_n(p_1,p_3)=\frac12(a_n+\frac{a_n^2}{b_n})\asymp\frac{a_n^2}{b_n}$, \ and
\ $R_n(p_2,p_4)=\frac12(b_n+\frac{b_n^2}{a_n})\asymp b_n$,

\vspace {0.1cm}
\noindent where \ $a_n=9^n$\  and \ $b_n=9b_{n-1}-\frac{b_{n-1}^2}{9^{n-1}+b_{n-1}}$
  for $n\geq1$  ($b_0=1$).  Moreover, \ $\lim_{n\to \infty} b_n/9^n =0$, and for any $0<\varepsilon <9$,\ $\lim_{n\to \infty} b_n/ (9-\varepsilon)^n =\infty$.
\end{proposition}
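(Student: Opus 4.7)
The plan is to encode the trace of $G_n$ on $V_0$ as an X-star on $V_0' = V_0 \cup \{p_0\}$ and iterate using Lemma \ref{th5.1}. First, the eyebolted Vicsek cross $K$ enjoys both a reflection symmetry across the diagonal $\overline{p_1 p_3}$ and a $180^\circ$ rotational symmetry about $p_0$. Consequently, for every $n$ the trace of $G_n$ on $V_0$ is a $\boxtimes$-network with four equal side-resistances and with two (possibly distinct) diagonal resistances along $\overline{p_1p_3}$ and $\overline{p_2p_4}$.

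I would then show by induction on $n$ that this $\boxtimes$ is X-equivalent, in the sense of Lemma \ref{th2.7}, to a star on $V_0'$ whose spokes take the symmetric form $(s_n, t_n, s_n, t_n)$ — with $s_n$ on the $p_1$- and $p_3$-spokes, and $t_n$ on the $p_2$- and $p_4$-spokes. The base case follows from Lemma \ref{th2.7} applied to the unit $\boxtimes$ $G_0$, giving $(s_0, t_0) = (1/4, 1/4)$. For the inductive step, self-similarity lets one decompose $G_{n+1}$ into $21$ copies of $G_n$ glued along $V_1$; using the inductive X-representation of each subcell on its corners-plus-center, we are exactly in the setting of Lemma \ref{th5.1} with $(a, b, c, d) = (s_n, t_n, s_n, t_n)$. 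Reading off $\Phi$ then produces
\[
s_{n+1} = 9 s_n, \qquad t_{n+1} = 7 t_n + \frac{t_n(2s_n + t_n)}{s_n + t_n},
\]
and in particular preserves the spoke equalities $a'=c'$ and $b'=d'$ required for the X-structure.

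Next I would pass to $a_n := 4 s_n$ and $b_n := 4 t_n$, so that $a_0 = b_0 = 1$. The first recursion becomes $a_{n+1}=9a_n$, giving $a_n = 9^n$; routine algebra on the second yields the stated recursion $b_{n+1} = 9 b_n - b_n^2/(9^n + b_n)$. The formulas for $R_n(p_i, p_j)$ then follow directly by converting the X-star $(s_n, t_n, s_n, t_n)$ back to its equivalent $\boxtimes$ via Lemma \ref{th2.7}: the side resistance equals $2(s_n + t_n) = (a_n + b_n)/2$, and the two diagonals equal $2 s_n(s_n + t_n)/t_n = \tfrac12(a_n + a_n^2/b_n)$ and $2 t_n(s_n + t_n)/s_n = \tfrac12(b_n + b_n^2/a_n)$ respectively. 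The $\asymp$-equivalences then reduce to verifying $b_n \le a_n$, which follows inductively from the recursion.

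The main obstacle will be the asymptotic analysis of $b_n$. Setting $r_n := b_n/9^n$, the recursion becomes
\[
r_{n+1} = r_n - \frac{r_n^2}{9(1 + r_n)}.
\]
The sequence $\{r_n\}$ is non-negative and strictly decreasing, hence converges to some $r \ge 0$; passing to the limit forces $r^2/(9(1 + r)) = 0$, so $r = 0$, and $b_n/9^n \to 0$. For the second claim, note that $b_{n+1}/b_n = 9 - r_n/(1 + r_n) \to 9$; so for any $\varepsilon \in (0,9)$ one may choose $N$ with $b_{n+1}/b_n \ge 9 - \varepsilon/2$ for all $n \ge N$. Iterating gives $b_n \ge b_N (9 - \varepsilon/2)^{n-N}$, and dividing by $(9 - \varepsilon)^n$ yields a quantity tending to infinity, as required.
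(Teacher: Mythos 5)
Your proposal is correct and follows essentially the same route as the paper: convert $G_n$ to the star network $G_n'$ via the $\boxtimes$-X transform, iterate the trace map $\mathbf{\Phi}$ of Lemma \ref{th5.1} on the symmetric vector $\tfrac14(a_n,b_n,a_n,b_n)$, convert back by Lemma \ref{th2.7}, and analyze $b_n/9^n$ through the recursion $x_n=x_{n-1}-\tfrac{x_{n-1}^2}{9(1+x_{n-1})}$. Your form $t_{n+1}=7t_n+\tfrac{t_n(2s_n+t_n)}{s_n+t_n}$ is algebraically identical to the paper's $9t_n-\tfrac{t_n^2}{s_n+t_n}$, so the only differences are cosmetic.
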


\medskip

\begin{proof} First we use the \begin{large}${\boxtimes}$\end{large}-X transform to convert the resistances from $G_n$ to $G_n'$, it follows that each cell  in $G_n'$ has resistance ${\mathbf \xi} = \frac 14(1,1,1,1)$ (Lemma \ref{th2.7}).  By applying Lemma \ref {th5.1}, we obtain the trace ${\mathbf\Phi(\mathbf \xi)}$ on each cell of $V_{n-1}'$. By induction and some simple calculation,   we see that  the trace of $G_n'$  on $V_0'$ is given by
\begin{equation*}
\mathbf{\Phi}^n({\mathbf \xi})= \frac 14(a_n,b_n,a_n,b_n),
\end{equation*}
where $a_n=9^n$ and $b_n=9b_{n-1}-\frac{b_{n-1}^2}{9^{n-1}+b_{n-1}}$, ($b_0=1$). Then applying the inverse  \begin{large}${\boxtimes}$\end{large}-X transform (Lemma \ref{th2.7}), we obtain the expressions of $R_n(p,q)$ for $p,q\in V_0$ as stated.

To prove the asymptotic values of $b_n$, we let $x_n = \frac {b_n}{9^n}$, then we have
$x_n = x_{n-1} -  \frac {x_{n-1}^2}{9(1+x_{n-1})}$, it follows that $\{x_n\}$  is non-increasing and has a limit $x$ satisfies $x = x-\frac {x^2}{9(1+x)}$. This implies $\lim_{n\to \infty} b_n/9^n =0$.  The limit also implies that for any $0<\varepsilon <9$, $b_n \geq ( 9- \frac \varepsilon 2) b_{n-1}$ for large $n$. and $\lim_{n\to \infty} b_n/ (9-\varepsilon)^n =\infty$ follows. The asymptotic values in (i), (ii) also follows.
\end{proof}

\medskip


  Now we apply the results in Section 3 to conclude the critical exponents of the  eyebolted Vicsek cross $K$, and the density of the Besov space in $C(K)$ and $L^2(K, \mu)$.  First we prove a lemma on the quotient network.

\medskip

\begin {lemma} \label {th5.3} We define a compatible equivalent relation on $V_0$ by identifying $p_2, p_4$, i.e., $V_0=\{p_1\}\bigcup\{p_2,p_4\}\bigcup\{p_3\} := J_1\bigcup J_2\bigcup J_3$. Then

\vspace {0.1cm}

\ (i) the relation satisfies \eqref{eq4.11} in Proposition \ref{th4.4};

\vspace {0.1cm}

(ii)  the trace of $G_n^\sim$ on $V_0^\sim$ is given by
\begin {equation}\label{eq5.2}
R^\sim_n(J_1, J_3) = \frac 12(9^n+ 9^{2n}), \qquad R^\sim_n(J_1, J_2) =R^\sim_n(J_3, J_2) = \frac 14 (1+9^n),
\end{equation}
\end {lemma}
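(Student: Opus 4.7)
The plan is to verify compatibility of the proposed equivalence relation, then apply Theorem \ref{th3.2} for part (i), and iterate the $\boxtimes$-X transform on the quotient network for part (ii).

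For (i), I would check that the relation $V_0=\{p_1\}\cup\{p_2,p_4\}\cup\{p_3\}$ propagates via rules (i) and (ii) of Definition \ref{de3.1} without forcing additional identifications: $p_1$ and $p_3$ are each the fixed point of a unique extremal diagonal map $F_i$, and no chain of cell intersections (either among the 9 diagonal cells or through the two eyebolts) forces $p_1$ or $p_3$ to be identified with any other vertex. Thus $J_1^*=\{p_1\}$ and $J_3^*=\{p_3\}$ remain singletons, while Theorem \ref{th3.2} realizes $\bar J_2^*$ as the attractor of a graph directed system. By enumerating which of the 21 maps contribute to $\bar J_2^*$ (a strict subset, since the cells hugging $p_1$ and $p_3$ do not), the similarity dimension comes out as $\log m/\log 9$ for some $m<21$, yielding $\dim_H \bar J_2^*<\log 21/\log 9=\alpha$, which is exactly condition \eqref{eq4.11}.

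For (ii), I would follow the scheme of Proposition \ref{th5.2}: apply the $\boxtimes$-X transform cell-by-cell so that each subcell in $V_n'$ carries X-arms of resistance $\tfrac14$, and iterate the trace map $\Phi$ of Lemma \ref{th5.1}. The quotient identification $F_\omega(p_2)\sim F_\omega(p_4)$ translates, in the X-version, to shorting arms $2$ and $4$ at every cell; this collapses the eyebolt contribution $\varphi(\xi)$ into a simpler parallel combination, and combined with the preserved symmetry $p_1\leftrightarrow p_3$ it reduces the iteration to a two-dimensional linear recursion. Its eigenvalues are expected to be $9$ and $81$, matching the exponents in the claimed closed forms. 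Inverting the $\boxtimes$-X transform on $V_0^\sim$ and pinning down the coefficients by directly computing $R_0^\sim(J_i,J_j)$ from $G_0^\sim$ then yields the exact expressions in \eqref{eq5.2}.

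The main obstacle is the bookkeeping needed to justify the reductions. First, one must verify that the relation satisfies property (B) of Section 3 (so that Lemma \ref{th3.3} applies and no 1-cell intersects two boundary classes), and then track carefully how the shorting within each cell matches the shorting across adjacent cells, especially in the eyebolt regions where the cells form closed loops that interact nontrivially with the identification. This amounts to a finite case check on the intersection pattern of the 21 cells of the eyebolted Vicsek cross, after which the recursion and its closed-form solution follow routinely.
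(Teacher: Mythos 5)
Your overall strategy (pass to the quotient network, compute the trace by iterating an electrical transform, then convert back and fix constants from level $0$) is the same as the paper's, but the route you sketch for (ii) contains a concrete error that would prevent it from closing. In the quotient the boundary has only three classes, and the correct reduction (the paper uses the $3$-terminal $\Delta$-Y form rather than the $4$-terminal $\boxtimes$-X form) gives arms $x_0=\tfrac14$, $y_0=\tfrac18$ and the recursion $x_n=9x_{n-1}$, $y_n=y_{n-1}=\tfrac18$: the eigenvalues are $9$ and $1$, not $9$ and $81$. The factor $9^{2n}$ in $R^\sim_n(J_1,J_3)=\tfrac12(9^n+9^{2n})$ is produced only by the back-conversion to the $\Delta$-form (essentially $x_n^2/y_n$), not by an eigenvalue $81$ of the linear iteration; indeed a two-dimensional linear recursion with spectrum $\{9,81\}$ cannot also produce $R^\sim_n(J_1,J_2)=\tfrac14(1+9^n)$, which requires the eigenvalue $1$. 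Relatedly, ``pinning down the coefficients by computing $R_0^\sim$'' cannot work as stated: one value of $n$ does not determine two coefficients, so you cannot avoid actually deriving the quotient recursion. The derivation itself is where the content lies: after shorting, all level-$1$ anti-diagonal vertices (and the eyebolt attachments chained to them) collapse into the single class node, so the anti-diagonal/eyebolt assembly becomes dead branching and $\varphi$ disappears, leaving exactly the $Y$ with arms $9x_{n-1},\,9x_{n-1},\,y_{n-1}$; this cross-cell collapse is more than ``shorting arms $2$ and $4$ at every cell,'' and it is the step your proposal defers to bookkeeping.

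For (i) your argument is in the right spirit but is both vaguer and slightly misstated: the dimension of a graph-directed attractor is governed by the spectral radius of the incidence matrix, not by the raw count $m$ of maps, so ``similarity dimension $\log m/\log 9$ for some $m<21$'' is not justified as written; what you actually need is either a containment of $\bar J_2^*$ in the attractor of a proper sub-IFS (which does give the upper bound $\log m/\log 9<\alpha$), or, as the paper does, the direct identification $\bar J_2^*=\overline{p_2p_4}$, so that $\dim_H\bar J_2^*=1<\alpha=\log 21/\log 9$, while $J_1^*,J_3^*$ are singletons. Either way the decisive geometric check on the intersection pattern of the $21$ cells (in particular that the eyebolt cells do not enlarge the class beyond the anti-diagonal) is left undone in your sketch. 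Also note that property (B) is not needed for this lemma; it enters only in Proposition \ref{th4.2}, which is used for the Sierpinski sickle, not here.
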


 \medskip

\begin {proof} (i)
 It is easy to see that $J^*_1$ and $J^*_3$ are singletons, $\bar J^*_2$ is the line segment $\overline{p_2p_4}$. Hence $\max\{\ \dim_H(\bar J^*_i):\ i=1,2,3\}= 1 < \alpha$, so that \eqref{eq4.11} holds.

\medskip

(ii) Observe that by identifying $p_2, p_4$, then  $E_0^\sim [u] = E_0[u]$ for  $u \in \ell(V_n^\sim)$. By comparing the conductances, we obtain the resistances on $V_0^\sim$ as  $R^\sim_0(J_1, J_3) =1, R^\sim_0(J_1, J_2) =R^\sim_0(J_3, J_2) = \frac 12$. We make use of the $\Delta$-$Y$ transform in the calculation. Let $x_0, y_0, x_0$ be the corresponding resistances in the $Y$-form. Then
$$
x_0 = \frac 14 \quad \hbox{and} \quad y_0 = \frac 18.
$$
Following the same type of proof in Lemma \ref{th5.1} and Proposition \ref{th5.2} in connection with the quotient (modify Figure \ref{fig6} to a more simple graph), it is easy to show that the trace satisfies
$$
x_n = 9x_{n-1} \quad \hbox {and} \quad y_n=\frac 18.
$$
We then transform  this back to the $\Delta$-form to get $R^\sim_n(J_i, J_j)$ inductively, which yields \eqref{eq5.2}.
\end{proof}

\bigskip

\begin{theorem}\label{th5.4}
For the Besov spaces $B^\sigma_{2, \infty}$ defined on the eyebolted Vicsek cross $K$, the critical exponents are
\begin{equation*}
\sigma^* =\sigma^\# = \frac 12 \Big (1+\frac{\log21}{\log9}\Big ).
\end{equation*}
Moreover,  (i) $B_{2,\infty}^{\sigma^* }$ is dense in $L^2(K,\mu)$; (ii)  $u \in B_{2,\infty}^{\sigma^* }$ takes constant values on each line segment parallel to $\overline {p_2p_4}$, and $B_{2,\infty}^{\sigma^* }$ is not dense in $C(K)$.
\end{theorem}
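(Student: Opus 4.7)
The plan is to reduce everything to Theorems \ref{th4.1} and \ref{th4.3} and Proposition \ref{th4.4}. First I would extract the asymptotic growth rates $R(p,q)$ from Proposition \ref{th5.2}: although $R_n(p_1,p_3) \asymp a_n^2/b_n$ and $R_n(p_2,p_4) \asymp b_n$ look very different from $R_n(p_i,p_{i+1}) \asymp 9^n$, the two-sided squeeze $(9-\varepsilon)^n \leq b_n \leq 9^n$ (eventually, for every $\varepsilon>0$) forces $n^{-1}\log b_n \to \log 9$, so all three expressions grow asymptotically like $9^n$. Hence $R(p,q) = 9$ for every $p\neq q$ in $V_0$, giving $R^* = R^\# = 9$. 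Theorem \ref{th4.1} then delivers $\sigma^* = \tfrac{1}{2}(1 + \log 21/\log 9)$, and to identify $\sigma^\#$ I would apply Theorem \ref{th4.3} with the compatible relation of Lemma \ref{th5.3}; condition \eqref{eq4.9} is immediate since Lemma \ref{th5.3}(ii) gives $R^\sim_n(J_i,J_j) \geq c\cdot 9^n$ for all distinct $i,j$, while $(R^\#)^{(1-\varepsilon)n} = 9^{(1-\varepsilon)n}$.

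For the density of $B^{\sigma^*}_{2,\infty}$ in $L^2(K,\mu)$, I would invoke Proposition \ref{th4.4} with the same compatible relation. The dimensional condition \eqref{eq4.11} is exactly Lemma \ref{th5.3}(i), since the boundary classes satisfy $\dim_H \bar J_i^* \leq 1 < \alpha$. The harder hypothesis \eqref{eq4.12} would be verified by building the extension of $u\in\ell(V_0^\sim)$ in two stages: along the boundary segment $\bar J_2^* = \overline{p_2 p_4}$ I would take a Lipschitz linear interpolation and apply Lemma \ref{th3.4} at the borderline value $R = R^\# = 9 = \rho^{-(2-1)}$, which is legal precisely because $\gamma = 1$ and $\mathcal{H}^1(\bar J_2^*)<\infty$; this yields $\sup_n 9^n E_{J_2^*,n}(u)<\infty$. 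Off the boundary classes I would iterate the harmonic extension on the quotient network, whose trace resistances $R^\sim_n \geq c\cdot 9^n$ supply exactly the decay required for \eqref{eq4.12}.

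For the non-density in $C(K)$ and the constancy statement, I would apply Theorem \ref{th4.1}(ii) to the pair $p_2, p_4$: since $9^n/R_n(p_2,p_4) \asymp 9^n/b_n \to \infty$ by Proposition \ref{th5.2}, every $u\in B^{\sigma^*}_{2,\infty}$ must satisfy $u(p_2) = u(p_4)$, which already destroys density in $C(K)$. To upgrade to constancy on every line segment parallel to $\overline{p_2 p_4}$, I would note that the discrete Besov semi-norm of Corollary \ref{th2.3} is monotone under restriction to subcells, so $u \circ F_\omega \in B^{\sigma^*}_{2,\infty}$ for every finite word $\omega$; re-applying the identity in the subcell gives $u(F_\omega(p_2)) = u(F_\omega(p_4))$. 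Continuity of $u$ (Proposition \ref{th2.2}) then forces $u$ to be constant on the closure of each equivalence class of the relation in Lemma \ref{th5.3}, and Theorem \ref{th3.2} identifies these closures as the similitude images of $\overline{p_2p_4}$ that lie in $K$, which are precisely the line segments parallel to $\overline{p_2 p_4}$ inside the fractal.

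The main obstacle is the verification of \eqref{eq4.12} at the critical exponent: the boundary-class estimate on $\overline{p_2 p_4}$ is borderline in Lemma \ref{th3.4}, so one must lean on the finiteness of $\mathcal{H}^1(\overline{p_2 p_4})$ and the sharpened box-count supplied by Theorem \ref{th3.2} and the Appendix; the remaining steps are essentially bookkeeping within the framework of Sections 3 and 4.
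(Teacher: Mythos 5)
Your overall route coincides with the paper's: Proposition \ref{th5.2} gives $R(p,q)=9$ for all pairs (your squeeze argument for $b_n$ is exactly the right way to read off the asymptotic geometric rate), Theorem \ref{th4.1} gives $\sigma^*$ and, via part (ii) applied to $(p_2,p_4)$, the failure of density in $C(K)$; the relation of Lemma \ref{th5.3} feeds Theorem \ref{th4.3} (condition \eqref{eq4.9} is checked correctly) and Proposition \ref{th4.4}. Your upgrade of $u(p_2)=u(p_4)$ to constancy on all segments parallel to $\overline{p_2p_4}$, by restricting to subcells and noting that the discrete semi-norm of $u\circ F_\omega$ is dominated by that of $u$, is exactly the self-similarity step the paper leaves implicit, and it is correct.

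The genuine gap is in your verification of \eqref{eq4.12}. First, the step you single out as ``the main obstacle'' is vacuous: \eqref{eq4.12} asks for an extension in $\ell(V_*^\sim)$, i.e.\ a function that is \emph{constant} on every equivalence class, so on the boundary class $J_2^*$ (whose closure is $\overline{p_2p_4}$) any admissible extension has $E_{J_2^*,n}(u)\equiv 0$; there is no Lipschitz interpolation to make and no borderline use of Lemma \ref{th3.4} on $\overline{p_2p_4}$ (that borderline case is what the paper needs for the Sierpinski sickle via Proposition \ref{th4.2}(i), not here). Second, the part that actually carries the weight --- producing a \emph{single} extension with $\sup_n 9^nE_n(u)<\infty$ --- is not delivered by ``iterate the harmonic extension on the quotient network, whose trace resistances satisfy $R^\sim_n\geq c\cdot 9^n$.'' By Lemma \ref{th5.3}(ii), $R^\sim_n(J_1,J_2)=\frac14(1+9^n)<9^n$ for every $n$, so the Proposition \ref{th2.6} mechanism (which needs $\rho^{-(2\sigma^\#-\alpha)N}=9^N\leq R^\sim_N$ for some block length $N$) never applies, and block-iterating harmonic extensions at the critical rate $9$ can lose a bounded-below factor ($\approx 4$) per block; nor are the quotient networks $G_n^\sim$ compatible, so the level-wise minimizers of \eqref{eq3.2} do not glue into one function. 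One genuinely has to exhibit an extension, e.g.\ a function that is constant (equal to $u(J_2)$) on $J_2^*$ and on the other classes, and Lipschitz along $\overline{p_1p_3}$ with value $u(J_2)$ at the center $p_0$: then the one-dimensional borderline count along $\overline{p_1p_3}$ (there the exponent $9=\rho^{-(2-1)}$ is critical, with finitely many cells of size $9^{-n}$ per level), together with the $O(9^{-2n})$ cross terms at the finitely many central cells joining the two diagonals, gives the uniform bound; note in particular that the naive linear interpolation from $u(p_1)$ to $u(p_3)$ fails, since the central cross edges would then contribute a fixed positive amount at every level. Without some such construction, the $L^2$-density claim at $\sigma^*=\sigma^\#$ is not established by your argument.
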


\medskip

\begin{proof} The cross $K$ has Hausdorff dimension $ \alpha = \frac {\log 21}{\log 9}$;  also
  $R^*  = 9$, and $\lim_{n\to \infty} \frac {R^{*n}}{R_n(p_2, p_4)} = \infty$ (Proposition \ref{th5.2}).  By Theorem \ref{th4.1}, $\sigma^*$ has the expression as asserted, and $B_{2,\infty}^{\sigma^*}$ is not dense in $C(K)$, and $u \in B_{2,\infty}^{\sigma^*}$ has the property as stated.

  \vspace {0.1cm}
  For $\sigma^\#$,  we  have $R^\# = 9$ by Proposition \ref{th5.2}. Take the compatible equivalent relation   in Lemma \ref{th5.3}, then  the conditions in Theorem \ref{th4.3} and Proposition \ref{th4.4} are satisfied. We conclude that $\sigma^\# $ has the same expression as  $\sigma^*$, and  $B^{\sigma^*}_{2, \infty}$ is dense in $L^2(K, \mu)$.
\end{proof}

\bigskip

{\bf 2.  Sierpinski sickle.}
Let $V_0 = \{p_1, p_2, p_3\}$ with $p_1= (0,0), \ p_2 = (1,0), \ p_3 = (\frac 12, \frac {\sqrt 3}2)$. Let $\{F_i\}_{i=1}^{17}$ be the IFS of contractive similitudes on $\mathbb{R}^2$ such that
$$
F_i(x) = \frac 17 x + a_i,  \qquad 1\leq i \leq 17,
$$
where the $a_i$'s are the $17$ points lie on the triangle determined by $V_0$ as indicated in Figure \ref{fig7}.
Let $K$ be the unique nonempty compact set such that
$
K=\bigcup_{i=1}^{17}K_i,
$
and call it a {\it Sierpinski sickle}.  Then $K_i\cap K_j$
contains at most one point, and  satisfies the p.c.f condition. The Hausdorff dimension of $K$ is $\alpha=\log17/\log7$, and the self-similar measure with the natural weight is the  normalized $\alpha-$dimensional Hausdorff measure $\mu$ on $K$.

\begin{figure}[th]
\textrm{
\begin{tabular}{cc}
\begin{minipage}[t]{2.2in} \includegraphics[width=2.2in]{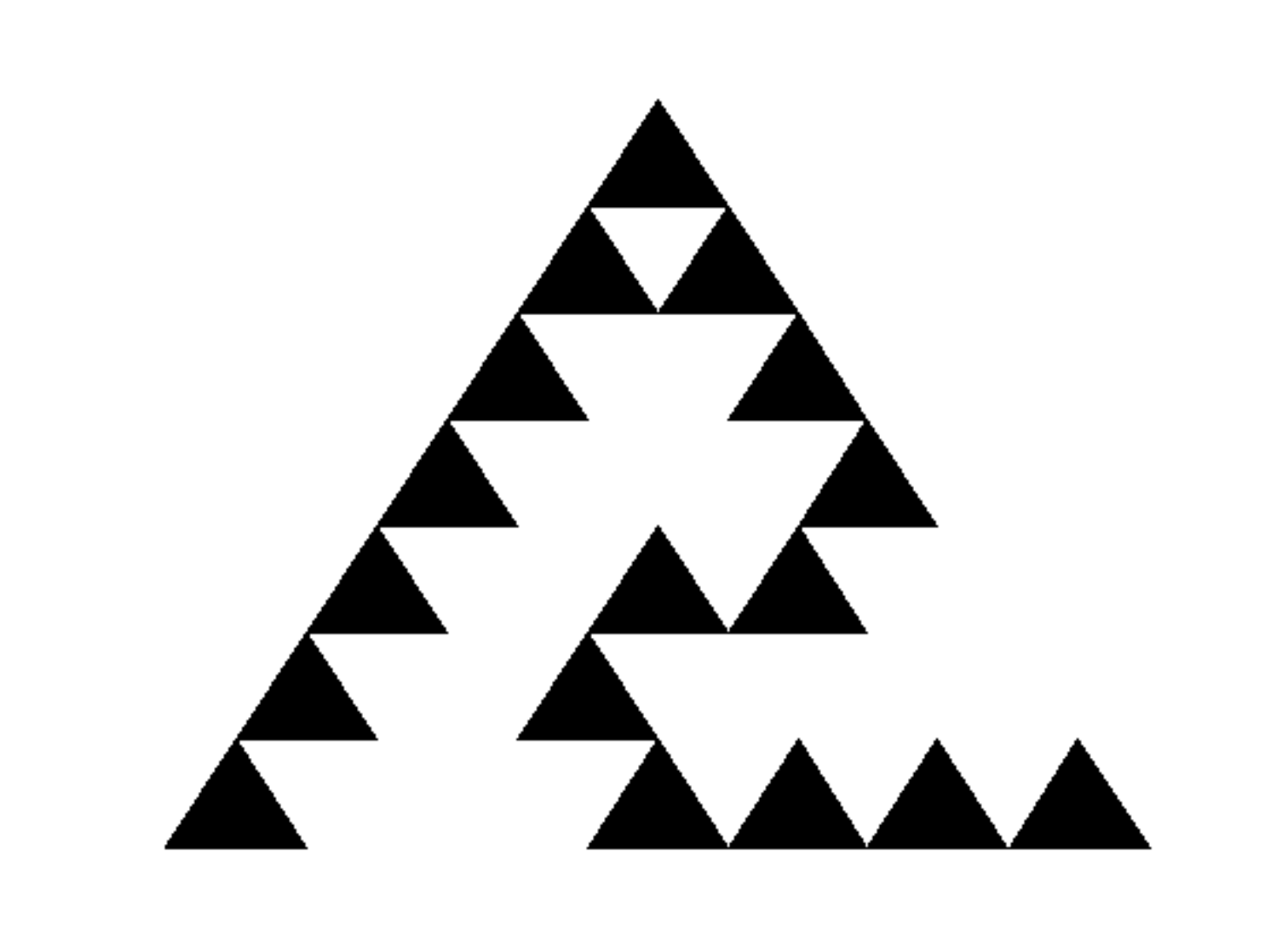}
 \end{minipage} \begin{minipage}[t]{2.2in}
\includegraphics[width=2.2in]{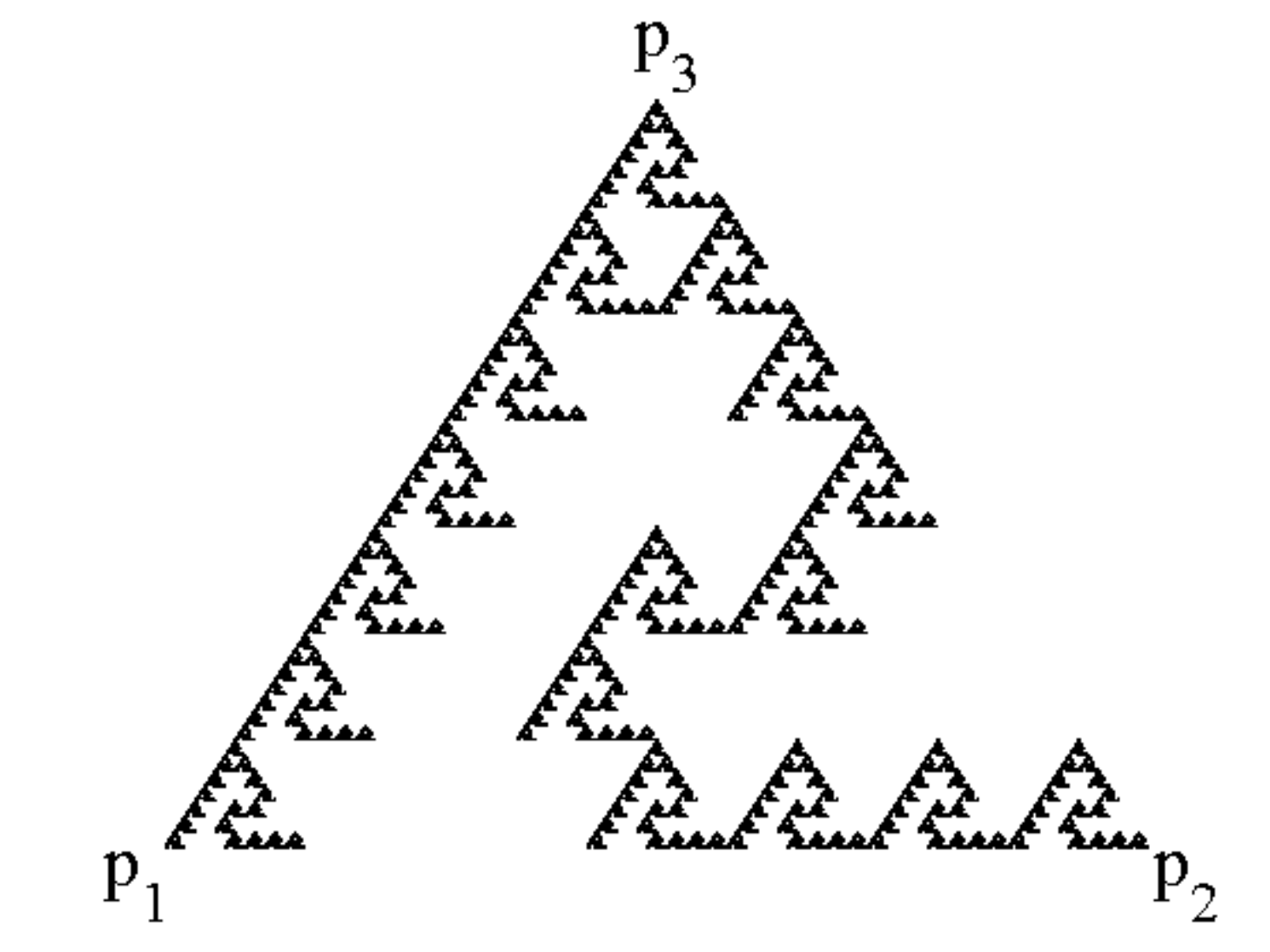}
\end{minipage} &
\end{tabular}
}\caption{The Sierpinski sickle $K$}\label{fig7}
\end{figure}

  On $V_0$, we arrange the three edges clockwise in the order of  $\overline {p_1p_2}$,  $\overline {p_2p_3}$,  $\overline {p_3p_1}$,  and the same way for the sub-triangles in $V_n$. Similar to the last example, we define the vertex sets $V_0'$ an $V_n'$ on the $Y$-side of the $\Delta$-$Y$ transform.
   We show that the  traces $R_n(p,q), p, q \in V_0$ of the primal energy  have different asymptotic geometric rate.
\medskip

\begin {lemma} \label {th5.5}
Suppose $(V'_1, \mathbf{\xi})$ is a network with resistance $\mathbf{\xi}=(a,b,c)$ on  the three edges of each subcell (as indicated in Figure \ref{fig8}). Then the trace of $(V'_1, \mathbf{\xi})$  on  $V'_0$ is:
\begin{equation} \label {eq5.3}
{\mathbf \Phi} ({\mathbf \xi})=(a', b', c')=
 \Big(6a+5c+ \varphi_a,\ 6a+8b+5c+\varphi_b,\ c+\varphi_c\Big).
\end{equation}
where $\varphi_a =\frac{(a+b)(a+c)}{2(a+b+c)}$, and $\varphi_b, \varphi_c$ are defined symmetrically.
\end {lemma}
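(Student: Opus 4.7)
The plan is to compute, for each pair of boundary vertices in $V'_0 = \{p_1, p_2, p_3\}$, the effective resistance of the network $(V'_1, \mathbf{\xi})$ by the systematic use of Kirchhoff's laws applied to the specific topology of the Sierpinski sickle after the $\Delta$-$Y$ transform. Each of the $17$ subcells of $V_1$ has been reduced to a $Y$-junction with spokes of resistances $a$, $b$, $c$, and these $17$ Y's are glued along shared corners according to the combinatorial layout exhibited in Figure \ref{fig7}. The target identity \eqref{eq5.3} expresses each entry of $\mathbf{\Phi}(\mathbf{\xi})$ as a principal series contribution plus a correction $\varphi_a$, $\varphi_b$, or $\varphi_c$ whose shape $\frac{(a+b)(a+c)}{2(a+b+c)}$ is characteristic of a single residual cycle collapsed by the parallel law.

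First I would enumerate the cells and record, for each, how its three spokes $a$, $b$, $c$ are oriented relative to the three boundary vertices. The principal-path contributions then arise from the series law applied along the shortest chain of spokes connecting each pair of boundary vertices: along $\overline{p_3 p_1}$ a single spoke $c$, along $\overline{p_1 p_2}$ a chain summing to $6a + 5c$, and along the long route $\overline{p_2 p_3}$ a chain summing to $6a + 8b + 5c$, matching the integer-coefficient parts of \eqref{eq5.3}. These counts follow by direct inspection of Figure \ref{fig7}, and the differing lengths reflect the asymmetry of the sickle.

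Second, I would isolate the sub-networks that are not absorbed into the principal paths. Each such sub-network is a single cycle attaching an alternative route to the principal path at one corner of the boundary triangle. Reducing the cycle by the parallel law (or, equivalently, by a further $\Delta$-$Y$ transform followed by series collapse) produces a correction of the closed form $\frac{(a+b)(a+c)}{2(a+b+c)}$, with the factor $\frac{1}{2}$ arising from two symmetric parallel branches meeting at the relevant corner. The corresponding corrections $\varphi_b$ and $\varphi_c$ are then obtained by the cyclic relabeling $a \to b \to c \to a$, which records how the spoke labels rotate under the corresponding corner swap of the triangle.

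The main obstacle is combinatorial bookkeeping rather than any genuinely new technique: because the sickle is not symmetric under three-fold rotation, one must verify independently at each of the three corners that the decomposition into principal path plus residual cycle is complete and that no spoke is double-counted. A natural order is to begin with $c'$, whose principal-path contribution is the shortest and so fixes the labeling convention, then to derive $a'$ by tracking the longer chain along $\overline{p_1 p_2}$, and finally to obtain $b'$ by combining the remaining spokes, using the already-established $\varphi$-expressions as a consistency check via cyclic symmetry.
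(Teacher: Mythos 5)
Your general instinct (reduce the level-one network by series reductions and a $\Delta$-Y step, with careful bookkeeping of the $17$ cells) is in the spirit of the paper's short proof, but your plan rests on a misreading of what the triple $(a',b',c')$ in \eqref{eq5.3} is, and the specific decomposition you describe would not survive the computation. In the paper $V'_0$ is the vertex set on the $Y$-side, i.e.\ $V_0$ together with an added center point, and ${\mathbf \Phi}({\mathbf \xi})=(a',b',c')$ lists the three \emph{spoke} resistances of the equivalent star on $V'_0$; this is exactly how it is used afterwards, when the $\Delta$-expression is recovered through \eqref{eq2.7} as $R_n(p_1,p_2)=a_n+b_n+\frac{a_nb_n}{c_n}$, etc. You instead take $V'_0=\{p_1,p_2,p_3\}$ and read $a',b',c'$ as the three pairwise effective resistances, matching the integer parts to ``shortest chains'' between pairs of boundary vertices. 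That cannot be correct: $c'=c+\varphi_c$ is far too small to be a two-terminal resistance between the corners $p_3$ and $p_1$, since any path between two corners of the sickle must cross many cells (in the trace, the $p_1$--$p_3$ resistance is $a'+c'$, not $c'$); likewise a chain running along $\overline{p_1p_2}$ would use $a$- and $b$-spokes of the cells it crosses, so it could never sum to $6a+5c$ (that sum is the chain of the five left cells, whose corners are of $p_1$- and $p_3$-type).

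The mechanism you propose for the correction terms is also not viable. The quantity $\varphi_a=\frac{(a+b)(a+c)}{2(a+b+c)}$ is \emph{not} a parallel-law combination of two branches $(a+b)$ and $(a+c)$ attached at a corner --- the parallel law would produce the denominator $2a+b+c$, not $2(a+b+c)$ --- and there is no separate residual cycle at each corner of the boundary triangle. What the ``direct calculation'' behind \eqref{eq5.3} actually amounts to is: discard the dead-end spokes (they carry no current), series-reduce the chain of the five left cells issuing from $p_1$ and the chain of the nine right cells issuing from $p_2$, note that $p_3$ is joined by the single spoke $c$ of its (top) cell, and observe that the three top cells form the unique loop, which after series reduction at its degree-two corners is a triangle with edge resistances $a+b$, $b+c$, $c+a$; one application of the $\Delta$-Y transform \eqref{eq2.6} to this triangle (whose perimeter is $2(a+b+c)$, which is where the factor $2$ comes from) produces all three of $\varphi_a,\varphi_b,\varphi_c$ simultaneously as its spokes, and adding the chains in series gives the star $(6a+5c+\varphi_a,\ 6a+8b+5c+\varphi_b,\ c+\varphi_c)$ on $V'_0$. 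In particular the three corrections cannot be obtained one corner at a time, nor cross-checked ``by cyclic symmetry'' (the sickle has none, and the symmetric form of $\varphi_a,\varphi_b,\varphi_c$ is a by-product of the single central $\Delta$-Y, not of any symmetry of the network). To repair the argument you should first fix the star interpretation of the trace on $V'_0$ and then carry out this one-loop reduction explicitly, as in Figure \ref{fig8}.
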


\medskip

\begin {proof}  We apply the  $\Delta$-$Y$ transform  and obtain \eqref{eq5.3} through a direct calculation (see Figure \ref{fig8}).
\end{proof}
\bigskip
\begin{figure}[h]
\centering
\textrm{
\scalebox{0.15}[0.15]{\includegraphics{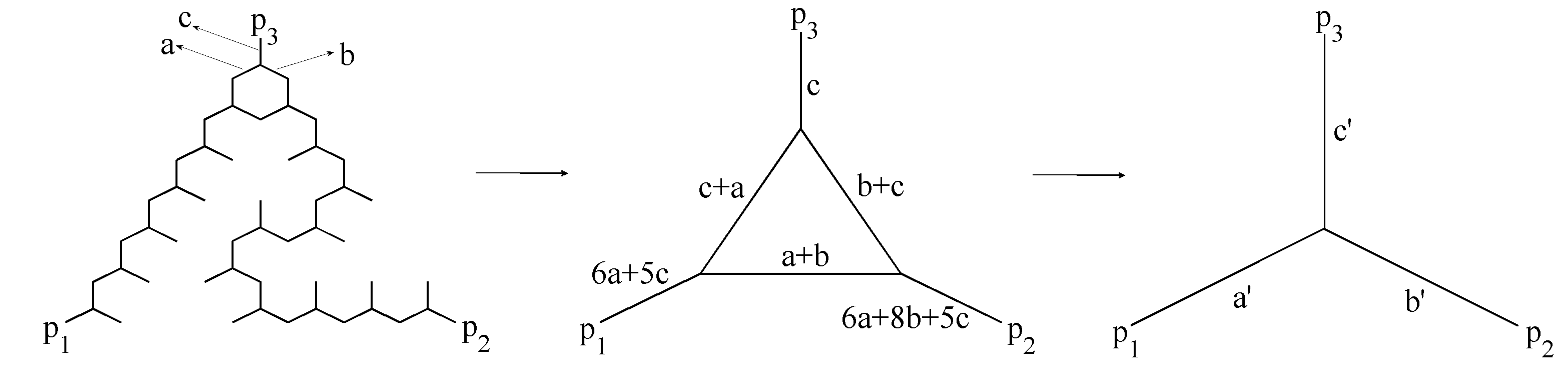}}
}
\caption{The trace of $G_1'$ on $G_0'$}
\label{fig8}
\end{figure}

\medskip

\begin{lemma} \label {th5.6}Let ${\mathbf \Phi}({\mathbf \xi}) = (a', b', c')$ be as in \eqref{eq5.3}. Then

\vspace {0.1cm}
(i)  if  $ c\leq a\leq b$, then $\frac 72  c'\leq a' \leq b'$;

\vspace {0.1cm}

 (ii) if  $\frac 72  c\leq a \leq b$, then
there exists  $ 0<\lambda_0 <1$ such that
$
 \frac ba \leq \lambda_0 \ \frac {b'}{a'} \ .
$
\end{lemma}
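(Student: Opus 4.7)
The plan is to prove both parts by direct algebraic manipulation of the explicit formulas in \eqref{eq5.3}, exploiting two factoring identities for $\varphi_b-\varphi_a$ and $\varphi_b-t\varphi_a$ that keep the computations transparent. Throughout, I would use the expressions $\varphi_b=\frac{(a+b)(b+c)}{2(a+b+c)}$ and $\varphi_c=\frac{(a+c)(b+c)}{2(a+b+c)}$ inferred from the symmetry statement.

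For part (i), the easier inequality is $a'\leq b'$. A direct computation gives $b'-a'=8b+(\varphi_b-\varphi_a)$, where
\[
\varphi_b-\varphi_a=\frac{(a+b)\bigl[(b+c)-(a+c)\bigr]}{2(a+b+c)}=\frac{(a+b)(b-a)}{2(a+b+c)}\geq 0
\]
under $a\leq b$, so $b'-a'\geq 8b\geq 0$. For $\tfrac{7}{2}c'\leq a'$, I would substitute $c'=c+\varphi_c$ and $a'=6a+5c+\varphi_a$, rearrange to $\tfrac{7}{2}\varphi_c-\varphi_a\leq 6a+\tfrac{3}{2}c$, and clear the common denominator $4(a+b+c)$. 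Factoring $(a+c)$ out of the left side and expanding the right side, this reduces to the polynomial inequality
\[
26a^2+19ab+25ac+bc-c^2\geq 0,
\]
which I would verify using $c\leq a$: this gives $25ac\geq 25c^2\geq c^2$, so $25ac-c^2\geq 0$, and the remaining terms are already nonnegative.

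For part (ii), I would parametrize with $t:=b/a\geq 1$ and $s:=c/a\in[0,2/7]$ (the hypothesis $\tfrac{7}{2}c\leq a$ is exactly $s\leq 2/7$), and analyze $b'-ta'$, whose sign and size control $b'/a'-b/a$. The key identity
\[
\varphi_b-t\varphi_a=\frac{a(1+t)s(1-t)}{2(1+t+s)}
\]
(obtained by combining fractions and factoring out $(1+t)$) yields
\[
b'-ta'=a\Big[\,6+2t-s(t-1)\Big(5+\tfrac{1+t}{2(1+t+s)}\Big)\Big].
\]
Using $s\leq 2/7$ and the trivial estimate $\tfrac{1+t}{2(1+t+s)}\leq\tfrac{1}{2}$ gives $b'-ta'\geq a(3t+53)/7$, while the parallel estimate $\tfrac{(1+t)(1+s)}{2(1+t+s)}\leq\tfrac{1+s}{2}\leq\tfrac{9}{14}$ yields $a'\leq\tfrac{113}{14}a$. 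Combining these,
\[
\frac{b'}{a'}-t=\frac{b'-ta'}{a'}\geq\frac{a(3t+53)/7}{113a/14}=\frac{6t+106}{113},
\]
so $b'/a'\geq\tfrac{119t+106}{113}\geq\tfrac{119}{113}t$ for $t\geq 1$, and the conclusion holds with $\lambda_0=113/119<1$.

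The main obstacle is simply the bookkeeping: the numerical constants must conspire to keep $\lambda_0$ strictly below $1$. The decisive point is the cancellation in $\varphi_b-t\varphi_a$, which factors out $(1-t)$ and combines with the linear term $5s(t-1)$, so that the $t$-growth of $ta'$ cannot overtake that of $b'$ by more than a definite margin controlled by $s\leq 2/7$. Without this cancellation, or with a larger allowed range of $s$, one would at best obtain $\lambda_0\leq 1$, insufficient for the intended iterative application that drives the asymptotic geometric growth rates of the traces of the Sierpinski sickle.
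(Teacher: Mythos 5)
Your proposal is correct: the symmetric reading $\varphi_b=\frac{(a+b)(b+c)}{2(a+b+c)}$, $\varphi_c=\frac{(a+c)(b+c)}{2(a+b+c)}$ matches the paper, the identity $\varphi_b-t\varphi_a=\frac{a(1+t)s(1-t)}{2(1+t+s)}$, the reduction of (i) to $26a^2+19ab+25ac+bc-c^2\ge 0$, and the bounds $b'-ta'\ge a(3t+53)/7$, $a'\le \frac{113}{14}a$ all check out, giving $\lambda_0=113/119$. This is essentially the paper's own route --- direct elementary estimates from \eqref{eq5.3} under $c\le a\le b$ and $c/a\le 2/7$ --- the paper simply bounds $\varphi_a,\varphi_b,\varphi_c$ by $\tfrac14$ or $\tfrac12$ of the relevant sums and compares $b'/a'$ with $b/a$ directly, arriving at the (slightly weaker but equally sufficient) constant $\lambda_0=\frac{226}{231}$.
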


\medskip

\begin {proof}
 It is direct to check that $a\leq b$ implies $a'\leq b'$, and $c\leq b$ implies $\frac{a+b}{a+b+c} \geq \frac 12$.  Hence
{\small \begin{equation}\label{eq5.4}
\frac{a'}{c'}
 \geq\frac{6a+5c+\frac{1}{4}(a+c)}{c+\frac{1}{2}(a+c)}\geq \frac72,
\end{equation} }
and (i) follows. By using \eqref{eq5.3} and a simple estimate, we have
{\small \begin{align*}
 {\small \frac{b'}{a'}
\geq\frac{6a+8b+5c+\frac{1}{4}(a+b)}
{6a+5c+\frac{1}{2}(a+c)}
 =\frac{25+ 33\left(\frac{b}{a}+20\left(\frac ca\right)\right)}{22\left(\frac{c}{a} \right) +26}
 \geq \frac{ 33\left(\frac{b}{a}\right)}{22\left(\frac{2}{7} \right) +26}
= \frac {231}{226} \cdot \frac ba\ } \ .
\end{align*}}
The assertion follows by letting $\lambda_0 =\frac{226}{231}$.
\end{proof}

\bigskip
We let $G_n$ denote the electrical network of the primal energy $E_n$ on $V_n$ , and  let $G_n'$ be the corresponding network in the $Y$-form.
As each edge in $G_n$  has resistance $1$, then each edge in $G_n'$ has resistance  $(a_0, b_0, c_0) =\frac 13 (1,1,1)$. Let  $= (a_n, b_n, c_n):= {\mathbf \Phi}^n (a_0, b_0, c_0) $ be the traces of $G'_n$ on $V'_0$.
Also let $R_n(p_1,p_2)$, $R_n(p_2,p_3)$, $R_n(p_3,p_1)$ be the equivalent traces in the $\Delta$-expression.  Then using the $\Delta$-$Y$ transform \eqref{eq2.7}, we have,

\medskip

{\small $R_n(p_1,p_2) =   a_n+b_n+\frac{a_nb_n}{c_n}, \ \
  R_n(p_2,p_3) =   b_n+c_n+\frac{b_nc_n}{a_n}, \ \
  R_n(p_3,p_1) =  c_n+a_n+\frac{c_na_n}{b_n}.
  $}

\bigskip

\begin{proposition}\label{th5.7}
With the above expressions, we have
{\small \begin{align*}
 R_n(p_1,p_2) \asymp  R_n(p_2,p_3)\asymp \left(\frac{17}{2}\right)^n, \quad \hbox {and} \quad
 R_n(p_3,p_1)\asymp 7^n.
\end{align*}}
\end{proposition}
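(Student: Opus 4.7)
The plan is to analyze the iterates $(a_n,b_n,c_n)=\Phi^n(\tfrac13,\tfrac13,\tfrac13)$ of the map $\Phi$ from Lemma~\ref{th5.5}, then substitute the asymptotic rates into the $\Delta$-$Y$ formulas for $R_n(p_i,p_j)$ displayed before the proposition.

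A direct computation gives $(a_1,b_1,c_1)=(\tfrac{35}{9},\tfrac{59}{9},\tfrac{5}{9})$, which satisfies $\tfrac{7}{2}c_1\le a_1\le b_1$. By Lemma~\ref{th5.6}(i) and induction, $\tfrac{7}{2}c_n\le a_n\le b_n$ for every $n\ge 1$; Lemma~\ref{th5.6}(ii) then yields $b_{n+1}/a_{n+1}\ge \lambda_0^{-1}(b_n/a_n)$, so $(a_n+c_n)/b_n$ decays geometrically. A short algebraic rearrangement in Lemma~\ref{th5.5} produces the exact identities
\begin{equation*}
\varphi_a=\tfrac{a+c}{2}-\tfrac{c(a+c)}{2(a+b+c)},\quad \varphi_b=\tfrac{b}{2}+\tfrac{ac}{2(a+b+c)},\quad \varphi_c=\tfrac{a+c}{2}-\tfrac{a(a+c)}{2(a+b+c)},
\end{equation*}
whence
\begin{equation*}
b_{n+1}=\tfrac{17}{2}b_n+O(a_n+c_n),\qquad \begin{pmatrix}a_{n+1}\\ c_{n+1}\end{pmatrix}=M\begin{pmatrix}a_n\\ c_n\end{pmatrix}+O\!\left(\tfrac{(a_n+c_n)^2}{b_n}\right),
\end{equation*}
where $M=\begin{pmatrix}13/2 & 11/2\\ 1/2 & 3/2\end{pmatrix}$ has characteristic polynomial $\lambda^2-8\lambda+7=(\lambda-1)(\lambda-7)$ and dominant eigendirection $(11,1)^{T}$.

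The sharp asymptotic rates follow by a two-step bootstrap. Iterating $X_{n+1}\le MX_n$ componentwise gives the crude upper bound $a_n+c_n\le D\cdot 7^n$, while $b_{n+1}\ge 8b_n$ gives $b_n\gtrsim 8^n$; the perturbation in the $b$-recurrence is therefore controlled by $C(7/8)^n$ relative to $b_n$, which is summable, and a standard product argument yields $b_n\asymp (17/2)^n$. Feeding this back, the nonlinear correction in the $(a,c)$-system is $O((98/17)^n)$, strictly dominated by $7^n$ since $98/17<7$; eigenvector decomposition of $X_n$ (with the initial value $X_1=(35/9,5/9)^{T}$ having nonzero $(11,1)^{T}$-component, and $M$-invariance preserving positivity) then gives $a_n,c_n\asymp 7^n$ and $a_n/c_n\to 11$.

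Substituting into the $\Delta$-$Y$ trace formulas yields
$R_n(p_1,p_2)=a_n+b_n+a_nb_n/c_n\asymp a_nb_n/c_n\asymp (17/2)^n$ (using boundedness of $a_n/c_n$),
$R_n(p_2,p_3)=b_n+c_n+b_nc_n/a_n\asymp b_n\asymp (17/2)^n$, and
$R_n(p_3,p_1)=c_n+a_n+c_na_n/b_n\asymp a_n\asymp 7^n$ (since $c_na_n/b_n\to 0$). The main obstacle is the two-step bootstrap in the preceding paragraph: the linearized eigenvalue analysis of $M$ is valid only once the nonlinear correction is dominated by the leading eigenvalue, which reduces to the strict numerical inequality $98<119$. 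Without this separation, the coupling to $b_n$ could alter the leading asymptotics.
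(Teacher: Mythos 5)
Your overall strategy is sound and the computations check out: the identities $\varphi_a=\tfrac{a+c}{2}-\tfrac{c(a+c)}{2(a+b+c)}$, $\varphi_b=\tfrac b2+\tfrac{ac}{2(a+b+c)}$, $\varphi_c=\tfrac{a+c}{2}-\tfrac{a(a+c)}{2(a+b+c)}$ are correct, as are $(a_1,b_1,c_1)=(\tfrac{35}{9},\tfrac{59}{9},\tfrac59)$, the matrix $M$, its spectrum $\{7,1\}$ and the eigendirection $(11,1)^T$. The first half (using Lemma~\ref{th5.6} to propagate $\tfrac72 c_n\le a_n\le b_n$ and to force geometric decay of $(a_n+c_n)/b_n$, then a convergent-product argument for $b_n\asymp(17/2)^n$) is in substance the same as the paper's, which bounds $b_n/b_{n-1}$ between $\tfrac{17}{2}-\lambda_0^{n-1}$ and $\tfrac{17}{2}+\tfrac{23}{2}\lambda_0^{n-1}$ directly from \eqref{eq5.3} and \eqref{eq5.5}. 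The second half is where you genuinely diverge: the paper never linearizes; it derives the scalar recursion \eqref{eq5.6} for $\tfrac{a_n}{c_n}-11$, shows $|\tfrac{a_n}{c_n}-11|\le C_1\delta_1^n$ by a contraction estimate, and then reads off $c_{n+1}/c_n=7+o(1)$ from \eqref{eq5.8}. Your spectral decomposition of the perturbed affine system recovers the same fixed ratio $11$ (it is exactly the Perron direction of $M$) and is arguably more conceptual, while the paper's ratio recursion is more self-contained and keeps all constants explicit.

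Two caveats. First, the closing claim that the argument ``reduces to the strict numerical inequality $98<119$'' is not quite the right mechanism for the lower bounds $a_n,c_n\gtrsim 7^n$: in the dominant eigencomponent $\alpha_n\asymp a_n+c_n$ one has $\alpha_{n+1}=7\alpha_n-e_n$ with $e_n\ge0$, and the absolute bound $e_n=O((98/17)^n)$ only shows that $\sum_k 7^{n-k}e_k=O(7^n)$ with an uncontrolled constant, so rate separation alone does not rule out the dominant component being eaten by the corrections. What saves you is the \emph{relative} smallness $e_n\le \tfrac{a_n+c_n}{2(a_n+b_n+c_n)}\cdot(a_n+c_n)$, i.e.\ $\alpha_{n+1}\ge 7\alpha_n\bigl(1-\tfrac{a_n+c_n}{14 b_n}\bigr)$ with $(a_n+c_n)/b_n$ geometrically small --- a fact you already proved in your first step --- so the lower bound follows from a convergent product; the bound $|\beta_n|=O((98/17)^n)=o(7^n)$ for the $\lambda=1$ component then gives $a_n,c_n\asymp7^n$ and $a_n/c_n\to11$. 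You should phrase the bootstrap this way rather than as an eigenvalue-versus-correction rate comparison. Second, a small slip at the end: $c_na_n/b_n\asymp(98/17)^n$ tends to infinity, not to $0$; what you need (and have) is that it is $O(a_n)$, indeed $o(a_n)$, so $R_n(p_3,p_1)\asymp a_n\asymp 7^n$ still follows.
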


\medskip
\begin{proof}  We will show that  $ b_n \asymp \left(\frac{17}{2}\right)^n$, and $c_n, a_n \asymp 7^n$, and the proposition will follow.
By Lemma \ref{th5.6}, we have
{\small \begin{equation}\label{eq5.5}
\frac{c_n}{b_n}\leq\frac{a_n}{b_n} \leq \lambda_0^n \ (<1).
\end{equation}}
Then making use of this together with
{\small \begin{equation*}
\frac{b_n}{b_{n-1}}=5\cdot\frac{c_{n-1}}{b_{n-1}}+6\cdot\frac{a_{n-1}}{b_{n-1}}+8+ \frac{\left(1+\frac{a_{n-1}}{b_{n-1}}\right)\left(1+\frac{c_{n-1}}{b_{n-1}}\right)
}{2\left(1+\frac{c_{n-1}}{b_{n-1}}+\frac{a_{n-1}}
{b_{n-1}}\right)}
\end{equation*}}
 (by \eqref{eq5.3}),  we obtain $\frac {17}2 - \lambda_0^{n-1} \leq \frac{b_n}{b_{n-1}}\leq\frac{17}{2}
+\frac{23}{2}\lambda_0^{n-1}$,
which implies
 $ b_n \asymp \left(\frac{17}{2}\right)^n$.

 \medskip
 Next,  by using \eqref {eq5.3} and a direct calculation, we have
{\small \begin{align}
\frac{a_{n+1}}{c_{n+1}}-11
   = \frac{\left(2+\frac{13a_n+c_n}{b_n}\right)\Big (\frac {a_n}{c_n}-11\Big ) +
   \frac{132a_n-12c_n}{b_n}}{2\left(1+\frac{a_n+c_n}{b_n}\right)+
   \left(1+\frac{c_n}{b_n}\right)\left(1+\frac{a_n}{c_n}\right)}.\label{eq5.6}
\end{align}}
Letting  $\alpha_n = \Big|\frac {a_n}{c_n}-11\Big|$ and making use of (\ref{eq5.5}),  \eqref {eq5.6} implies
 $$
 \alpha_{n+1} \leq \delta \alpha_n + \gamma \lambda_0^n
 $$
 for some $0<\delta<1$, $\gamma >0$, and for $n$ sufficiently large. An inductive argument shows that  there exist $n_0$ and $0< \delta_1< 1$ such that for $n>n_0$, we have
 $\alpha_{n+1}\leq \delta_1 \alpha_n$. It follows that there is $C_1$ such that
{\small \begin{equation}\label{eq5.7}
\alpha_n\leq C_1\delta_1^n \qquad \forall \ n\geq 0.
\end{equation}}
Now by \eqref{eq5.3} again and simplify the terms,  we have
{\small \begin{align}
\frac{c_{n+1}}{c_n}
=7+\frac{\left(1+\frac{c_n}{b_n}\right)\Big(\frac {a_n}{c_n}-11\Big) -12\frac{a_n}{b_n}}
{2\left(1+\frac{a_n+c_n}{b_n}\right)}, \label{eq5.8}
\end{align}}
By (\ref{eq5.5}), (\ref{eq5.7}) and (\ref{eq5.8}),  we conclude that $c_n \asymp 7^n
$.   The same estimate also holds for $a_n$, and completes the proof.
\end{proof}

\bigskip

\begin{theorem}\label{th5.8} For the Besov spaces $B^\sigma_{2, \infty}$ defined on the Sierpinski sickle $K$, the critical exponents are
\begin{align*}
\sigma^* = \frac 12 \Big (1+\frac{\log17}{\log7}\Big ), \qquad
\sigma^\# =\frac 12 \Big (\frac{2\log17-\log2}{\log7}\Big ).
\end{align*}
Moreover, we have
\vspace{0.1cm}

\ (i) $B^{\sigma^*}_{2, \infty}$ is dense in $C(K)$.

\vspace{0.1cm}

(ii) For $\sigma^* < \sigma \leq \sigma^\#$,  $B^{\sigma}_{2, \infty}$ is dense in $L^2(K, \mu)$.
\end{theorem}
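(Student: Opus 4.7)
The resistance growth rates from Proposition \ref{th5.7} are $R(p_1,p_2)=R(p_2,p_3)=17/2$ and $R(p_3,p_1)=7$. Hence the minimum is $R^*=7$, attained on $(p_1,p_3)$, and Theorem \ref{th4.1} immediately gives $\sigma^* = \frac{1}{2}(\log R^*/(-\log\rho) + \alpha) = \frac{1}{2}(1 + \log 17/\log 7)$. For $\sigma^\#$, any chain from $p_1$ to $p_2$ must include at least one edge of rate $17/2$ (the direct edge, or, when routed through $p_3$, the edge $(p_3,p_2)$), and symmetrically for $(p_2,p_3)$, while $(p_1,p_3)$ has rate $7$; consequently $R^\#=17/2$, and Theorem \ref{th4.3} will yield $\sigma^\# = \frac{1}{2}(2\log 17 - \log 2)/\log 7$ once a compatible relation verifying \eqref{eq4.9} is produced.

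\textbf{Compatible relation and quotient estimate.} I take the compatible relation on $V_0$ defined by $J_1=\{p_1,p_3\}$, $J_2=\{p_2\}$, the natural choice shorting out the slow side $\overline{p_1p_3}$. By Theorem \ref{th3.2}, $\overline{J_1^*}$ is the graph directed attractor consisting of the full segment $\overline{p_1p_3}$ (this is consistent with, and indeed forced by, the rate $R_n(p_3,p_1)\asymp 7^n$), of Hausdorff dimension $1$, while $J_2^*=\{p_2\}$ is a singleton. Property (B) is satisfied because any $1$-cell intersects $\overline{p_1p_3}$ only along a sub-segment. The decisive estimate is $R_n^\sim(J_1,J_2)\asymp (17/2)^n$. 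The upper bound $R_n^\sim(J_1,J_2)\leq R_n(p_1,p_2)\asymp (17/2)^n$ is immediate from monotonicity of effective resistance under identifications. For the matching lower bound I would adapt the $\Delta$-$Y$ recursion of Proposition \ref{th5.7} to the quotient network: once the vertices of $V_*\cap\overline{p_1p_3}$ are shorted the slow $7^n$-branch disappears, and the remaining recursion is dominated by the $(17/2)^n$ rate carried by $R_n(p_1,p_2)$ and $R_n(p_2,p_3)$. This is the principal technical obstacle; the bookkeeping on the reduced trace vector is where non-trivial computation is required. With $R^\sim(J_1,J_2)=17/2=R^\#$ established, condition \eqref{eq4.9} holds and the formula for $\sigma^\#$ follows from Theorem \ref{th4.3}.

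\textbf{Density.} For (i), I apply Proposition \ref{th4.2}. Condition (ii) there reads $R^\sim(J_1,J_2)=17/2>7=R^*$, just established. Condition (i) requires, for the unique non-trivial boundary class $J=J_1^*$ and each $u\in\ell(\{p_1,p_3\})$, an extension on $\overline{p_1p_3}$ with $\sup_n R^{*n} E_{J,n}(u)<\infty$. Here Lemma \ref{th3.4} applies in its relaxed form: with $\gamma=\dim_H\overline{J_1^*}=1$ we have $R^*=7=\rho^{-(2-\gamma)}$, and $\mathcal H^1(\overline{p_1p_3})<\infty$, so the linear interpolation along $\overline{p_1p_3}$ suffices. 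Proposition \ref{th4.2} then yields the density of $B^{\sigma^*}_{2,\infty}$ in $C(K)$. For (ii), I apply Proposition \ref{th4.4}: hypothesis \eqref{eq4.11} holds because $\max_i\dim_H\overline{J_i^*}=1<\log 17/\log 7=\alpha$, which yields density in $L^2(K,\mu)$ for every $\sigma^*<\sigma<\sigma^\#$. At the endpoint $\sigma=\sigma^\#$, hypothesis \eqref{eq4.12} is verified by iterating the harmonic extension in the quotient network using $R_n^\sim(J_1,J_2)\asymp (17/2)^n$ together with the identity $E_n^\sim[u]=E_n[u]$ of \eqref{eq3.1} for functions constant on equivalent classes, so $B^{\sigma^\#}_{2,\infty}$ is also dense in $L^2(K,\mu)$.
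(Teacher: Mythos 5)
Your overall route is the same as the paper's: $R^*=7$, $R^\#=17/2$ from Proposition \ref{th5.7}, the compatible relation $J_1=\{p_1,p_3\}$, $J_2=\{p_2\}$, Lemma \ref{th3.4} in its relaxed form (with $\gamma=1$, $\mathcal H^1(\overline{p_1p_3})<\infty$, $R^*=7=\rho^{-(2-\gamma)}$) to verify condition (i) of Proposition \ref{th4.2}, and then Theorems \ref{th4.1}, \ref{th4.3} and Propositions \ref{th4.2}, \ref{th4.4}. The problem is that the one estimate everything hinges on --- the growth of the quotient trace $R^\sim_n(J_1,J_2)$ --- is exactly the step you leave open, describing it as the ``principal technical obstacle'' and offering only the heuristic that shorting $\overline{p_1p_3}$ removes the slow $7^n$ branch. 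Monotonicity under identification only gives the upper bound $R^\sim_n(J_1,J_2)\le R_n(p_1,p_2)$, which is not what \eqref{eq4.9}, Proposition \ref{th4.2}(ii), or \eqref{eq4.12} require; they all need a lower bound, and shorting can a priori drag the rate below $17/2$. The paper closes this by a short explicit induction on the quotient network (equation \eqref{eq5.9}): within each cell the shorted left edge contributes resistance $\tfrac12$ in series with the $8$ unit resistors of the right chain, so $R^\sim_n(J_1,J_2)=\bigl(\tfrac12+8\bigr)R^\sim_{n-1}(J_1,J_2)=\tfrac12\bigl(\tfrac{17}{2}\bigr)^n$ exactly. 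Until you carry out this (or an equivalent) computation, $\sigma^\#$, the density statement (i) via $R^\sim(J_1,J_2)>R^*$, and the endpoint density in (ii) are all unproved.

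A second, related point: even granting $R^\sim_n(J_1,J_2)\asymp(17/2)^n$ with unspecified constants, your verification of \eqref{eq4.12} by ``iterating the harmonic extension in the quotient network'' does not go through at the critical exponent. At $\sigma=\sigma^\#$ there is no $\varepsilon$ of slack: a block-iteration argument in the style of Proposition \ref{th2.6} would need $\rho^{-(2\sigma^\#-\alpha)N}\le R^\sim_N(J_1,J_2)$, i.e.\ $(17/2)^N\le\tfrac12(17/2)^N$, which fails for every $N$. What actually makes \eqref{eq4.12} work is the exact one-step recursion above, which makes the quotient networks compatible so that the renormalized energies $(17/2)^nE_n(u)$ of the successive harmonic extensions stay uniformly bounded for a single extension on $V_*^\sim$. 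So the exact form of the recursion, not just the asymptotic rate, is doing real work, and your sketch should be upgraded accordingly.
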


\medskip

\begin{proof} The Sierpinski sickle  $K$ has Hausdorff dimension $ \alpha = \frac {\log 17}{\log 7}$, and $R^*  = 7$ (Proposition \ref{th5.7}).  By Theorem \ref{th4.1}, the expression of  $\sigma^*$ follows. To consider $\sigma^\#$, we know that $R^\# = \frac {17}2$ (Proposition \ref{th5.7}), and we need to check that condition \eqref{eq4.9} in Theorem \ref{th4.3} is satisfied.

\vspace {0.1cm}

For this we take the compatible equivalent relation to be $V_0 = \{p_1,p_3\}\bigcup\{p_2\}:= J_1\bigcup J_2$. Then $R_0(J_1,J_2) = 1/2$, and by a simple inductive argument, we have
\begin{equation} \label{eq5.9}
R^\sim_n(J_1,J_2)= \Big (\frac 12 + 8\Big )\cdot R^\sim_{n-1}(J_1, J_2) = \frac 12 \Big (\frac {17}2\Big )^n.
\end{equation}
Hence  condition \eqref{eq4.9} is satisfied, and $\sigma^\#$ is as asserted.

\vspace{0.1cm}

To prove (i), we use the same  equivalence relation as the above. We check the conditions in Proposition \ref{th4.2}.
First property (B) is satisfied trivially. Next, the closure of the boundary class $J^*_1$ is the line segment $\overline{p_1p_3}$, therefore $\dim_H(\bar J^*_1)=1$ and $\mathcal H^1(\bar J^*_1)<\infty$. We have by Lemma \ref{th3.4}, for a given $u$ on $J_1$, there is extension on $J^*_1$ with $7^{n}E_{J^*_1,n}(u)=E_{J_1,0}(u)$; also $R^{\sim}(J_1, J_2) = \frac {17}2 > R^* =7$.
Thus all the conditions in Proposition \ref{th4.2} are  satisfied and  $B^{\sigma^*}_{2, \infty}$ is dense in $C(K)$.

  \vspace {0.1cm}

   For (ii),  that $B^{\sigma}_{2, \infty}$, $\sigma > \sigma^*$,  is not dense in $C(K)$ is in  the definition of $\sigma^*$. That $B^{\sigma^\#}_{2, \infty}$ is dense in $L^2(K, \mu)$ follows from Proposition \ref{th4.4}, as condition \eqref{eq4.11} holds by the same argument as in Lemma \ref{th5.3},  and the condition \eqref{eq4.12} holds. By the decreasing property of $B^\sigma_{2, \infty}$ on $\sigma$, it follows that of $B^\sigma_{2, \infty}$ is   dense in $L^2(K, \mu)$ for $\sigma < \sigma^{\#}$.
\end{proof}

\medskip
It is well-known that if a p.c.f. set admits a self-similar energy form ${\mathcal E}$, then the re-normalized energy ${\mathcal E }_n[u]$ increases to ${\mathcal E}[u]$, which defines an (equivalent) Besov norm. This is not the case for the
Sierpinski sickle.
\medskip

\begin {corollary} \label {th5.9} On the Sierpinski sickle, there is non-constant $u \in B^{\sigma^*}_{2, \infty}$ such that \\ $\lim\limits_{n\to \infty}\rho^{-(2\sigma^* - \alpha)n} E_n[u] = \lim\limits_{n\to \infty} 7^nE_n[u] =0$.
\end {corollary}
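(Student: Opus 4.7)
The plan is to exploit the strict gap $\sigma^{*}<\sigma^{\#}$ established in Theorem \ref{th5.8} together with monotonicity of the Besov scale; once that gap is in place, no new construction is needed.

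First I would check that $\sigma^{\#}>\sigma^{*}$ strictly. From the two formulas in Theorem \ref{th5.8}, a one-line computation gives
\[
\sigma^{\#}-\sigma^{*} \;=\; \frac{1}{2\log 7}\,\log\frac{17}{14} \;>\; 0,
\]
so the interval $(\sigma^{*},\sigma^{\#})$ is nonempty. Fix any $\sigma$ in this interval. By Theorem \ref{th5.8}(ii), $B^{\sigma}_{2,\infty}$ is dense in $L^{2}(K,\mu)$, hence contains non-constant functions (otherwise it would be a one-dimensional subspace, which cannot be dense in the infinite-dimensional $L^{2}(K,\mu)$). Pick any such non-constant $u$. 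Since $\sigma>\sigma^{*}$, the inclusion $B^{\sigma}_{2,\infty}\subset B^{\sigma^{*}}_{2,\infty}$ gives $u\in B^{\sigma^{*}}_{2,\infty}$, which is the continuous embedding into $C(K)$ via Proposition \ref{th2.2}.

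Next I would extract the decay of $7^{n}E_{n}[u]$ from the stronger Besov bound at level $\sigma$. Since $u\in B^{\sigma}_{2,\infty}$, Proposition \ref{th1.1} yields a constant $C>0$ with
\[
E_{n}[u] \;\leq\; C\,\rho^{(2\sigma-\alpha)n} \;=\; C\,7^{-(2\sigma-\alpha)n}.
\]
For the Sierpinski sickle one has $\alpha=\log 17/\log 7$ and hence $2\sigma^{*}-\alpha=1$, so multiplying by $7^{n}=\rho^{-(2\sigma^{*}-\alpha)n}$ gives
\[
7^{n}E_{n}[u] \;\leq\; C\,7^{-(2\sigma-\alpha-1)n} \;=\; C\,7^{-2(\sigma-\sigma^{*})n} \;\longrightarrow\; 0
\]
as $n\to\infty$, because $\sigma>\sigma^{*}$. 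This proves the corollary.

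The main (and essentially only) ingredient is the strict inequality $\sigma^{\#}>\sigma^{*}$, which was already the substance of Theorem \ref{th5.8}; this is what distinguishes the sickle from the classical situation where $\sigma^{*}=\sigma^{\#}$ and the renormalized primal energies converge to a nontrivial self-similar energy form. Once the gap is in hand, the rest reduces to the two-sided comparison of the Besov seminorm in Proposition \ref{th1.1} and the elementary monotonicity $B^{\sigma}_{2,\infty}\subset B^{\sigma^{*}}_{2,\infty}$.
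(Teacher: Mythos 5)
Your proof is correct, but it follows a genuinely different route from the paper. The paper proves Corollary \ref{th5.9} by an explicit construction: it starts from $u(p_1)=u(p_3)=0$, $u(p_2)=1$, extends level by level so that $u$ is constant on line segments parallel to $\overline{p_1p_3}$ except on $8$ subcells per cell where it is rescaled affinely, and then bounds $7^nE_n[u]\leq 2\cdot 7^n\cdot 8^n\cdot(1/8)^{2n}=2(7/8)^n\to 0$, invoking Proposition \ref{th2.4} to extend $u$ to $K$ inside $B^{\sigma^*}_{2,\infty}$. You instead argue softly: the strict gap $\sigma^*<\sigma^\#$ (your computation $\sigma^\#-\sigma^*=\tfrac{1}{2\log 7}\log\tfrac{17}{14}$ is right), the existence of a non-constant $u\in B^{\sigma}_{2,\infty}$ for any $\sigma\in(\sigma^*,\sigma^\#)$ (via Theorem \ref{th5.8}(ii), or even more directly from the definition of $\sigma^\#$ and the monotonicity of the scale), the inclusion $B^{\sigma}_{2,\infty}\subset B^{\sigma^*}_{2,\infty}$, and Proposition \ref{th1.1}, which gives $E_n[u]\leq C\,7^{-(2\sigma-\alpha)n}$ and hence $7^nE_n[u]\leq C\,7^{-2(\sigma-\sigma^*)n}\to 0$ since $2\sigma^*-\alpha=1$. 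There is no circularity, since Theorem \ref{th5.8} is established before and independently of the corollary. What each approach buys: your argument shows the phenomenon is an automatic consequence of any strict gap $\sigma^*<\sigma^\#$, so it generalizes beyond the sickle with no new work; the paper's argument is self-contained and exhibits a concrete witness (a function constant in the $\overline{p_1p_3}$ direction, reflecting the quotient structure), which is used to illuminate why the renormalized primal energies cannot produce a regular form here. Note also that the constructive content has not disappeared in your version --- it is hidden in the lower-bound half of Theorem \ref{th4.3} (the harmonic extension through the quotient network), which is what guarantees non-constant functions below $\sigma^\#$ in the first place.
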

\begin{figure}[h]
\centering
\textrm{
\scalebox{0.16}[0.16]{\includegraphics{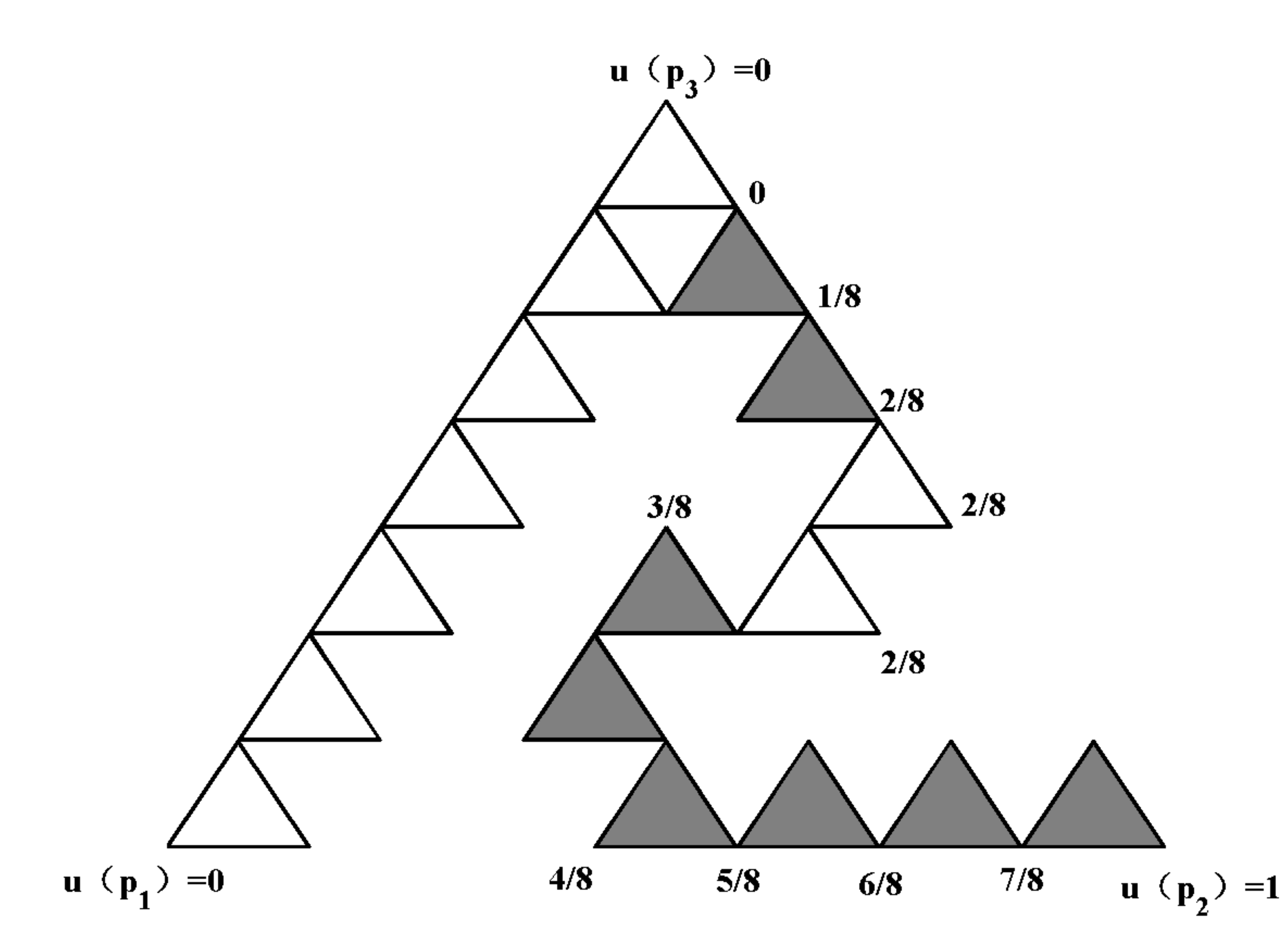}}
}
\caption{$u$ on $V_1$, constant on line segments parallel to $\overline{p_1p_3}$, non-constant on shaded triangles}
\label{fig9}
\end{figure}

\begin {proof}  Let $u$ be defined on $V_0$ with values $u(p_1) =u(p_3) =0$ and $u(p_2) =1$. We extend $u$ on $V_1$ as follows:  take $u$  to be constant $0$ on the $7$ sub-triangles of $F_i(V_0)$ on the left side; the values of the $F_i(V_0)$ in the 10 sub-triangles on the right as in Figure \ref{fig9}.
Note that there are $8$ sub-triangles that $u$ takes non-constant values. Next we define $u$ on $V_2$ by $u_i = u(F_i(p_1))+(u(F_i(p_2))-u(F_i(p_1)))\cdot u\circ F_i^{-1}, 1\leq i \leq 17 $ on $F_i(V_1)$.
 We continue the same process to define $u$ on each $V_n$, and eventually on $V_*$. A direct calculation shows that
$$
7^n {\sum} _{x,y \in F_\omega (V_0): |\omega|=n} |u(x) -u(y|^2 \leq 7^n \cdot 8^n\cdot 2 \cdot \Big (\frac 18\Big )^{2n}.
$$
This implies $\lim_{n\to \infty}  7^n E_n [u]=0$.  Also we have
$$
\sup_{n\geq 0} \Big \{ \rho^{-(2\sigma^* - \alpha)n}  E_n [u] \Big\} \  = \ \sup_{n\geq 0} \big \{ 7^n E_n [u] \big\} < \infty,
$$
by Proposition \ref{th2.4},  $u$ can be extended to be in $B^{\sigma^*}_{2, \infty}$, and proves the statement.
\end{proof}

\bigskip

\section{\bf Constructions of Dirichlet forms}

\bigskip

\subsection{Eyebolted Vicsek cross}\label{sec4.2}
  Since $B^{\sigma^*}_{2, \infty}$ is not dense in $C(K)$,  $B^{\sigma^*}_{2, \infty}$ cannot be the domain of a local regular Dirichlet form on $K$. Nevertheless we will give two constructions of such Dirichlet forms on $K$,  but the domains are different from $B^{\sigma^*}_{2, \infty}$.

\medskip
\begin{theorem} \label {th6.1}  On the eyebolted Vicsek  cross, there are two kinds of (non-primal) local regular Dirichlet forms that can be constructed, one satisfies the energy self-similar identity \eqref{eq2.4},  the other one is from a reverse recursive construction and does not satisfy \eqref{eq2.4}.
\end{theorem}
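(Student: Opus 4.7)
The plan is to address the two parts in sequence, each time exploiting the symmetry of the eyebolted Vicsek cross and the $\boxtimes$-X transform of Lemma \ref{th2.7}.

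For the first (self-similar) construction, I would not use unit conductances but a weighted scheme. The figure has a $\mathbb{Z}_2\times\mathbb{Z}_2$ symmetry (reflection in the two diagonals $\overline{p_1p_3}$ and $\overline{p_2p_4}$, which exchange the nine ``cross'' cells among themselves and the twelve ``eyebolt'' cells among themselves). Using this symmetry I would introduce two conductance parameters $c,d > 0$ (one for edges inside cross cells, one for edges inside eyebolt cells, or equivalently two renormalization factors $\tau_{\rm cr},\tau_{\rm ey}$), and define the renormalization map $T$ that sends a symmetric form on $V_0$ to the trace on $V_0$ of the $V_1$-form assembled from the $21$ weighted cells. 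By passing to the $Y$-side via Lemma \ref{th2.7} and applying Lemma \ref{th5.1} with the appropriate scalings, $T$ reduces to a homogeneous rational map on $\mathbb{R}_+^2$. Existence of a strictly positive eigen-pair $(c,d)$ with eigenvalue $\lambda>1$ then follows from a Perron–Frobenius/degree-theoretic argument on the positive cone, as carried out in \cite{Sa,L,Me,HMT,Pe} in the abstract setting; here the argument is concrete because we have the explicit formula \eqref{eq5.1}. Given such a fixed point, the sequence $\mathcal{E}_n[u]=\sum_{|\omega|=n}\tau_\omega^{-1}\mathcal{E}_0[u\circ F_\omega]$ is monotone increasing for any $u\in\ell(V_*)$ and its limit $\mathcal{E}[u]$ satisfies \eqref{eq2.4}, so by \cite{K,S} it defines a local regular Dirichlet form on $L^2(K,\mu)$.

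For the second (reverse recursive) construction, I would adapt the method of \cite{HHW,GLQ}. Fix an arbitrary positive symmetric initial form $\mathcal{E}_0$ on $V_0$ (equivalently, three positive numbers $R_0(p_i,p_j)$). The idea is to \emph{invert} the trace operation: on each $1$-cell $F_i(V_0)$ choose conductances on its six edges so that the trace of the resulting network of $V_1$ on $V_0$ reproduces exactly $\mathcal{E}_0$. Using Lemma \ref{th2.7} on the $Y$-side and then Lemma \ref{th5.1} for the one-step trace, this inverse problem becomes a finite explicit algebraic system in four unknowns per cell which can be solved in positive numbers precisely when $\mathcal{E}_0$ lies in a certain open region of parameter space (this region is non-empty and is the ``feasibility region'' to be identified). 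Iterating, we obtain $\mathcal{E}_n$ on $V_n$ whose trace on $V_{n-1}$ equals $\mathcal{E}_{n-1}$; as in Part 1, $\{\mathcal{E}_n[u]\}_n$ is monotone increasing in $n$, hence $\mathcal{E}[u]:=\lim_n\mathcal{E}_n[u]$ defines a quadratic form. Its domain $\mathcal{F}$ contains every piecewise harmonic function obtained by harmonically extending any $u\in\ell(V_0)$ through this sequence of networks; such functions separate points of $V_*$ and are uniformly dense in $C(K)$, giving regularity. Closedness, Markovian and locality properties are verified as in the standard case.

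The main obstacle is twofold. First, for Part 1, one must certify that the fixed point of $T$ is strictly positive and admits a renormalization eigenvalue $\lambda>1$; I expect this to follow by combining the explicit $\boxtimes$-X formulas with a continuity/fixed-point argument restricted to the symmetric sub-cone, but it needs case-checking because the $12$ eyebolt cells and $9$ cross cells contribute asymmetrically to \eqref{eq5.1}. Second, for Part 2, one must show the inverse trace problem is solvable in positive conductances at every level. The natural obstruction is that the image of the one-step trace map (Lemma \ref{th5.1}) is a strict sub-cone of all positive forms, so $\mathcal{E}_0$ must be constrained to this image and this constraint must propagate; choosing $\mathcal{E}_0$ with $R_0(p_1,p_3)$ much smaller than $R_0(p_2,p_4)$ (reflecting the asymmetry found in Proposition \ref{th5.2}) is natural, and genericity in this feasibility region guarantees that $\mathcal{E}$ is \emph{not} a fixed point of the self-similar renormalization, so the form obtained does not satisfy \eqref{eq2.4}. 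Finally, the conclusion that $\mathcal{F}\neq B^{\sigma^*}_{2,\infty}$ for either construction follows immediately from Theorem \ref{th5.4}(i) and the fact that both Dirichlet forms are regular with $\mathcal{F}\cap C(K)$ dense in $C(K)$.
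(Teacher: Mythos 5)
Your overall architecture (a weighted self-similar construction plus a trace-inverting ``reverse recursive'' construction) matches the paper, but both halves leave the decisive steps unproved, and the second half contains a genuine gap. For the first construction, the paper does not need any Perron--Frobenius or degree-theoretic argument: it fixes the symmetric resistance vector $a=b=c=d$ in the fixed-point equation $\mathbf{\Phi}_{\tau',\tau''}(a,b,c,d)=(a,b,c,d)$, so that the unknowns become the two weights and the system is \emph{linear}, with the explicit solution $\tau'=\tfrac19$, $\tau''=\tfrac{16}{135}$; the local regular Dirichlet form then follows from \cite{HMT,HW}. Your appeal to the abstract cone fixed-point theory is exactly what the paper is trying to avoid (those results do not apply off the shelf to asymmetric p.c.f.\ sets, which is the paper's stated motivation), and the point you defer --- that the fixed point is strictly positive and interior --- is precisely the delicate issue (compare the repulsive boundary fixed point $(0,1,0)$ that forces the restriction to $D_\varepsilon$ in Proposition \ref{th7.1}). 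Since the explicit $\boxtimes$-X formulas are available, this part is fillable, but as written it is an announcement rather than a proof.

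The second construction is where the real gap lies, in two places. First, solvability of the inverse trace problem in positive conductances at every level is not established; you acknowledge a ``feasibility region'' but do not exhibit one. The paper sidesteps this by choosing the specific initial data $\mathbf{y}_0=(1,1,1,1)$ on the $X$-side, where the symmetric sub-cone $(s,t,s,t)$ is preserved by $\mathbf{\Phi}$ and the inverse recursion is explicit: $s_n=9^{-n}$ and $t_{n-1}=9t_n-\frac{t_n^2}{9^{-n}+t_n}>0$, so positivity and compatibility hold at every level by inspection. Second, and more seriously, your argument that the limiting form fails \eqref{eq2.4} is not a proof: ``generically not a fixed point of the one-step renormalization'' neither is established nor suffices, because \eqref{eq2.4} asserts the existence of \emph{some} weight vector $(\tau_i)_{i=1}^{21}$, not that the form is fixed by one particular renormalization map. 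The paper rules this out by contradiction: if \eqref{eq2.4} held, the uniform weighting of the cells in the construction forces $\tau_1=\cdots=\tau_{21}=\tau$; testing on the function $u_1$ that is linear along $\overline{p_1p_3}$ and constant on the eyebolt branches pins down $\tau=\tfrac19$; then for the harmonic function $u_2$ with boundary values $(0,1,0,0)$, iterating \eqref{eq2.4} together with the trace estimate gives $\mathcal{E}[u_2]\geq C^{-1}\frac{a_n}{b_n}\to\infty$ by Proposition \ref{th5.2}, contradicting $u_2\in\mathcal F$. Without an argument of this kind (which is exactly where the inhomogeneous growth $a_n/b_n\to\infty$ enters), the ``does not satisfy \eqref{eq2.4}'' half of the theorem remains unproved. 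Your final observation that $\mathcal F\neq B^{\sigma^*}_{2,\infty}$ via Theorem \ref{th5.4} is correct.
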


\begin{proof}
{\it First construction}:  \ We assign two different renormalization factors $\tau', \tau{''}$ (to be determined)  on the cells of $K$ as follows:
let $\tau_1=\tau_2=\cdots=\tau_9=\tau'$ on the $9$ sub-cells $F_1(K),\cdots,F_9(K)$ along the line $\overline{p_1p_3}$, and let $\tau_{10}=\tau_{11}=\cdots=\tau_{21}=\tau{''}$ on the remaining $12$ sub-cells $F_{10}(K), \cdots, F_{21}(K)$;
 then similar to Lemma \ref{th5.1}, we obtain a new trace map $\mathbf{\Phi}_{\tau',\tau``}$ for ${\mathbf \xi} = (a, b, c, d)$:
{\small \begin{align*}
\mathbf{\Phi}_{\tau',\tau{''}}({\mathbf \xi})=
\Big({\tau'}(5a+4c),\ \tau{''}\left(3b+3d+ \varphi ({\mathbf \xi})\right)+{\tau'}b,\
{\tau'}(4a+5c), \ {\tau{''}}\left(3b+3d+\varphi ({\mathbf \xi})\right)+{\tau'}d\Big).
\end{align*}}
where $\varphi ({\mathbf \xi})=\frac{(b+d)(2a+2c+b+d)}{2(a+b+c+d)}$.  Consider the equation
\begin{equation} \label{eq6.1}
\mathbf{\Phi}_{\tau',\tau{''}}(a,b,c,d)=(a,b,c,d),
\end{equation}
i.e., the trace of $G'_1$  coincides with the resistances on $G'_0$. If we apply this to $G'_n$ inductively,  then we obtain a sequence of networks $\{G_k'\}_{k=0}^n$ that is compatible in the sense of Definition \ref{de2.5}, and given the energy self-similar identity.   Specifically, let us take  $a=b=c=d$ in \eqref{eq6.1}, then it reduces to be two simple linear equations, and the solution is
\begin{align*}
  \tau' =\frac19, \quad
  \tau{''} =\frac{16}{135}.
\end{align*}
Let $E_0(u)=\sum_{p,q\in V_0}(u(p)-u(q))^2$,  define
\begin{equation*}
 \mathcal{E}[u]=\lim\limits_{n\to\infty}\sum_{|\omega|=n}{\tau_\omega}^{-1}{E}_0(u\circ F_\omega),
\end{equation*}
and $\mathcal{E}[u]<\infty$ implies that $u\in C(K)$  \cite[Theorem 2.2.6(1)]{K}, thus we can let
$
 \mathcal{F}=\{u\in C(K):\mathcal{E}[u]<\infty\}.
$
Then $(\mathcal{E},\mathcal{F})$  satisfies the self-similar identity
$$
\mathcal{E}[u]= \sum_{i=1}^{17} {\tau_i}^{-1} {\mathcal E}[u\circ F_i], \qquad u \in {\mathcal F}.
$$
It is known that this defines a local regular Dirichlet form on the metric measure space  $(K, d_r,
\nu)$,  where $d_r$ is the resistance metric on $K$, and $\nu$ is the self-similar measure with  weights  $\{\tau_i^s\}_{i=1}^N$ where $\sum_{i=1}^N {\tau_i}^s =1$ (\cite {HMT},  \cite {HW}).

\medskip

{\it Second construction}: \  The main idea is to use  $\mathbf{\Phi}^{-n}$  (where $\mathbf{\Phi}$ is defined in (\ref{eq5.1})) to construct a sequence of  conductances $\{c_n(x,y)\}_n$ in (\ref{eq2.3}) such that ${\mathcal E}_n[u]$ converges for  $u\in C(K)$.

\medskip

Consider the network  $G_n'$, let ${\mathbf y }_n$ be the resistance on each cell of $G_n'$. We are looking for ${\mathbf y }_n= (s_n, t_n, s_n, t_n)$ such that the trace is ${\mathbf y }_0 =(1,1,1,1)$, i.e., $\mathbf{\Phi}^n ({\mathbf y}_n) = {\mathbf y }_0$. As $\mathbf{\Phi}(s,t,s,t)= (9s, 9t- \frac {t^2}{s+t}, 9s, 9t- \frac {t^2}{s+t})$, it follows that
\begin{equation*}
  \mathbf {y}_n=\mathbf{\Phi}^{-n}({\mathbf y}_0)=\left(s_n, t_n, s_n, t_n\right)
\end{equation*}
 where $s_n=9^{-n}$ and $t_{n-1}=9t_n-\frac{t_n^2}{9^{-n}+t_n} (>0)$ for $n\geq1$. Hence by the compatibility of $G_n'$ and $G_0'$ with resistance $\mathbf{y}_n$ and ${\bf y}_0$ respectively, we have
{\small\begin{align}
  &\min_{\upsilon}\Big\{\sum_{|\omega|=n}\Big (s^{-1}_n\sum\limits_{i=1,3}\left|\upsilon\circ F_\omega(p_i)-\upsilon\circ F_\omega(p_0)\right|^2+t^{-1}_n\sum\limits_{j=2,4}|\upsilon\circ F_\omega(p_j)-\upsilon\circ F_\omega(p_0)|^2\Big)\Big\}\notag\\
  &=\sum\limits_{p\in V_0}|u(p)-u(p_0)|^2.\label{eq6.2}
\end{align}}
where the minimum is taken over all $\upsilon \in \ell (V_n')$ such that $\upsilon|_{V_0} = u$.  Then by applying the inverse of\begin {large} ${\boxtimes}$\end{large}-X transform  (Lemma \ref{th2.7}) to each cell in $G'_n$, we obtain an equivalent network   $G_n$.
 \begin{equation} \label {eq6.3}
\min_{\upsilon}\Big\{\sum_{x,y\in V_\omega,|\omega|=n}c_{n}(x,y)\left|\upsilon(x)-\upsilon(y)\right|^2\Big\}
=  \sum_{ p,q\in V_0}\frac14|u(p)-u(q)|^2,
\end{equation}
where the resistances $c_n(x,y)^{-1}$ on $V_n$ are given by
\begin{align*}
  &c_n(F_\omega(p_i),F_\omega(p_{i+1}))^{-1} =2(s_n+t_n),\ i=1,2,3,4,\ (p_5=p_1)\\
  &c_n(F_\omega(p_1),F_\omega(p_3))^{-1} =2(s_n + \frac{s_n^2}{t_n}) , \\
  &c_n(F_\omega(p_2),F_\omega(p_4))^{-1} =2(t_n + \frac{t_n^2}{s_n}).
\end{align*}
 For $u\in C(K)$ and $n\geq0$, let
\begin{equation*}
{\mathcal E}_n[u] = \sum\limits_{x,y\in V_\omega,|\omega| = n }c_n(x,y)|u(x)-u(y)|^2.
\end{equation*}
By the compatibility of ${G_n}$  and $G_{n-1}$ through the ${\bf y}_n$ and ${\bf y}_{n-1}$, we see that ${\mathcal E}_n[u]$ is an increasing sequence on $n$, define
\begin{align*}
{\mathcal E}[u] =\lim\limits_{n\rightarrow\infty} {\mathcal E}_n[u], \quad
{\mathcal F}=\{u\in C(K): {\mathcal E}(u)<\infty\}.
\end{align*}
Note that  $\mathcal F$ is dense in $C(K)$ by approximating $u \in C(K)$ through the piecewise harmonic functions constructed from \eqref {eq6.3} applied to the subcells.  Hence it is not hard to see that $({\mathcal E},{\mathcal F})$ is a local regular Dirichlet form on the metric measure space $(K, |\cdot |, \mu)$.

\medskip

Finally we show by contradiction that the above Dirichlet form  does not satisfy the self-similar identity. Assume that there exist positive numbers $\tau_1,\tau_2,\cdots,\tau_{21}$ such that for any $u\in \mathcal{F}$,
\begin{equation}\label{eq6.4}
\mathcal {E}[u]={\sum}_{i=1}^{21}{\tau_i}^{-1}\mathcal {E}[u\circ F_i].
\end{equation}
Recall that in our construction, the weight we put on each cell is the same, then we  have $\tau_1=\tau_2=\cdots=\tau_{21}=\tau$.

\vspace {0.1cm}
Let $u_1$ be the function that is linear on the line segment $\overline{p_1p_3}$ with boundary values $u_1(p_1)=1$, $u_1(p_3)=0$, and $u_1$ is constant on all the eyebolted branches issued at some point on $\overline{p_1p_3}$.
Then the energy of $u_1$ is supported on $\overline{p_1p_3}$. We can  easily show that $\mathcal {E}_n[u_1]=\frac12$ for all $n\geq0$, and thus $\mathcal {E}[u_1]=\frac12$. Similarly we have $\mathcal {E}[u_1\circ F_i]=\frac12\cdot\frac1{9^2}$ for each $i=1,2,\cdots,9$ along the line $\overline{p_1p_3}$,  and $\mathcal {E}[u\circ F_i]=0$ for the rest twelve maps. By using \eqref{eq6.4},
we obtain $\tau=\frac19$.

\vspace{0.1cm}

Let $u_2$ be the harmonic function with boundary values $\Big(u_2(p_1),u_2(p_2),$ $u_2(p_3),u_2(p_4)\Big)=(0,1,0,0)$. By using (\ref{eq6.4}) $n$ times with $\tau_1=\tau_2=\cdots=\tau_{21}=1/9$, and $u=u_2$, we obtain
\begin{equation}\label{eq6.5}
\mathcal{E}[u_2]=\sum_{|\omega|=n}{\tau_\omega}^{-1}\mathcal {E}[u_2\circ F_\omega]=9^n\sum_{|\omega|=n}\mathcal {E}[u_2\circ F_\omega], \qquad  n>0.
\end{equation}
Since for any $u\in \mathcal F$, $\mathcal {E}[u]$ is the limit of the increasing sequence $\mathcal {E}_n[u]$, the trace estimate yields
\begin{equation}\label{eq6.6}
9^n\sum_{|\omega|=n}\mathcal {E}[u_2\circ F_\omega]
\geq 9^n\sum_{|\omega|=n}\mathcal{E}_0[u_2\circ F_\omega]= 9^n\sum_{p,q\in V_\omega;|\omega|=n}\frac14|u_2(p)-u_2(q)|^2.
\end{equation}
On the other hand, by (\ref{eq4.1}) and Proposition \ref{th5.2}, we have
\begin{align}
9^n\sum_{p,q\in V_\omega,|\omega|=n}\frac14|u_2(p)-u_2(q)|^2
&\geq\frac14\min_{u:u|_{V_0}=u_2}\Big\{9^n\sum_{x,y\in V_\omega,|\omega|=n}\left|u(x)-u(y)\right|^2\Big\}\notag\\
&\geq C^{-1}\frac {a_n}{b_n}|u_2(p_2)-u_2(p_4)|^2\notag\\
&= C^{-1}\frac {a_n}{b_n}\rightarrow\infty \text{ as }n\rightarrow\infty.\label{eq6.7}
\end{align}
Hence we see from (\ref{eq6.5}), (\ref{eq6.6}) and (\ref{eq6.7}) that $\mathcal{E}(u_2)=\infty$, contradicting $u_2\in\mathcal{F}$.
Therefore the Dirichlet form  does not satisfy the energy self-similar identity.
\end{proof}

\bigskip

\subsection {\bf Sierpinski sickle}
Let $K$ be the Sierpinski sickle. Despite $B^{\sigma^*}_{2, \infty}$ is dense in $C(K)$, the primal energy does not give a local regular Dirichlet form in view of Corollary \ref{th5.9}.    We do not know if $B^{\sigma^*}_{2, \infty}$ can be domain of some other local regular Dirichlet form $(\mathcal{E},\mathcal{F})$ on $L^2(K,\mu)$ with $B^{\sigma^*}_{2, \infty}$ as domain.  On the other hand,  we  have the following conclusion.

\medskip
\begin {theorem}  \label {th6.2}The Sierpinski sickle admits a local regular Dirichlet form that satisfies the  energy self-similar identity.
\end{theorem}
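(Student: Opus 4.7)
The approach parallels the first construction in the proof of Theorem \ref{th6.1} for the eyebolted Vicsek cross: one looks for a fixed point of a suitably renormalized trace map. I would first exploit the built-in reflection symmetry of the sickle across the perpendicular bisector of $\overline{p_1p_3}$, which swaps $p_1 \leftrightarrow p_3$, fixes $p_2$, and is consistent with Proposition \ref{th5.7} (where $R_n(p_1,p_2) \asymp R_n(p_2,p_3)$ but $R_n(p_3,p_1)$ grows on a different scale). This symmetry permutes the $17$ maps $F_i$ and partitions them into a small number of symmetry orbits, which cuts the number of free parameters down significantly.

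Assign renormalization factors $\tau_i>0$ that are constant on each symmetry orbit --- two or three free parameters $\tau', \tau'',\ldots$ --- and form the renormalized trace map $\mathbf{\Phi}_{\vec{\tau}}$ on the $Y$-side network by weighting each cell's three resistances by its $\tau_i$, then pushing through the computation of Lemma \ref{th5.5} on each sub-cell and summing (in series/parallel) along the six branches coming into each vertex of $V_0'$. By symmetry, $\mathbf{\Phi}_{\vec{\tau}}$ preserves the hyperplane $\{a=c\}$, and the fixed-point equation
\begin{equation*}
\mathbf{\Phi}_{\vec{\tau}}(a,b,c) = (a,b,c)
\end{equation*}
reduces to a small system of algebraic equations in $(\vec{\tau}, a, b)$ with $a=c$. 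Following the eyebolted Vicsek strategy, I would first attempt the simplest ansatz $a = b$, which typically linearizes the system and yields explicit rational values of the $\tau_i$; if that ansatz is infeasible, relax it to $a \ne b$ and solve the resulting (small) polynomial system.

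Once a positive solution $(\vec{\tau}, a, b, c)$ is in hand with $\tau_i \in (0,1)$, define $E_0[u] = \sum_{p,q\in V_0}(u(p)-u(q))^2$ and
\begin{equation*}
\mathcal{E}[u] = \lim_{n\to\infty}\sum_{|\omega|=n}\tau_\omega^{-1} E_0(u\circ F_\omega),\qquad \mathcal{F} = \{u \in C(K): \mathcal{E}[u] < \infty\},
\end{equation*}
where $\tau_\omega = \tau_{\omega_1}\cdots\tau_{\omega_n}$. Compatibility at level $0$ (the fixed-point equation) propagates to all $n$, so $\mathcal{E}_n$ is monotone increasing and the limit exists on $\mathcal{F}$. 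Appealing to Kigami \cite[Thm.~2.2.6]{K} together with \cite{HMT, HW}, $(\mathcal{E}, \mathcal{F})$ is then a local regular Dirichlet form on the metric measure space $(K, d_r, \nu)$ (with $d_r$ the resistance metric and $\nu$ the self-similar measure whose weights $\tau_i^s$ satisfy $\sum_i \tau_i^s = 1$), and by construction it satisfies the energy self-similar identity \eqref{eq2.4}.

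The main obstacle is the existence and positivity of the fixed point of $\mathbf{\Phi}_{\vec{\tau}}$: the trace formula \eqref{eq5.3} is genuinely nonlinear (the $\varphi_a, \varphi_b, \varphi_c$ terms are rational in the resistances), and there is no a priori reason a hand-chosen ansatz must yield positive $\tau_i$. Should the explicit calculation prove intractable, I would fall back on the abstract existence theorems for renormalization fixed points on (asymmetric) p.c.f. self-similar sets due to Lindstr\o m \cite{L}, Metz \cite{Me}, Sabot \cite{Sa}, Peirone \cite{Pe}, and Hambly--Metz--Teplyaev \cite{HMT}; the reflection symmetry of the sickle is enough to place it in Sabot's $G$-symmetric framework and guarantee a positive fixed point, which suffices to conclude. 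In either route, the Dirichlet form is \emph{non-primal} (in agreement with Corollary \ref{th5.9}), and its domain is in general different from $B^{\sigma^*}_{2,\infty}$.
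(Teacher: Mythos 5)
Your overall strategy---assigning renormalization factors constant on a few groups of cells, forming the weighted trace map from Lemma \ref{th5.5}, solving a fixed-point equation, and passing to the increasing limit to get a local regular Dirichlet form satisfying \eqref{eq2.4}---is exactly the paper's route (the analogue of the first construction in Theorem \ref{th6.1}). However, the device you use to make the computation tractable rests on a false premise: the Sierpinski sickle has \emph{no} reflection symmetry swapping $p_1\leftrightarrow p_3$. The IFS places $5$ cells along one slanted side and $9$ along the other (this asymmetry is the whole point of the example), and accordingly the trace map of Lemma \ref{th5.5}, $\mathbf{\Phi}(a,b,c)=\bigl(6a+5c+\varphi_a,\ 6a+8b+5c+\varphi_b,\ c+\varphi_c\bigr)$, is not invariant under $a\leftrightarrow c$; for instance $\mathbf{\Phi}(1,1,1)=(35/9,\,59/9,\,5/9)$, so the hyperplane $\{a=c\}$ is not preserved even by the unweighted map. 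The asymptotic coincidence $R_n(p_1,p_2)\asymp R_n(p_2,p_3)\asymp(17/2)^n$ in Proposition \ref{th5.7} comes from both traces being dominated by the leg $b_n$ at $p_2$, not from any geometric symmetry (the exact values differ already at $n=1$). Consequently your ``symmetry orbits'' do not exist, the orbit-based parameter reduction would wrongly force $\tau_L=\tau_R$ (the actual solution has them different), and the fallback of invoking Sabot's $G$-symmetric framework is unavailable because the symmetry group is trivial; an abstract-existence fallback would have to use the genuinely asymmetric results of \cite{Sa,Me,Pe,HMT}, with nondegeneracy checked separately since $\mathbf{\Phi}$ has the degenerate fixed direction $(0,1,0)$ (cf.\ the remark following Theorem \ref{th6.2} and Section 7).

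A second concrete error is your choice $E_0[u]=\sum_{p,q\in V_0}(u(p)-u(q))^2$, which corresponds to the symmetric resistance triple $a=b=c$. That triple cannot be a fixed point with positive weights: substituting $a=b=c$ into $\mathbf{\Phi}_{\tau_L,\tau_R,\tau_T}(a,b,c)=(a,b,c)$, the $c$-equation gives $\tau_T=3/5$ and the $a$-equation then gives $10\tau_L+1=1$, i.e.\ $\tau_L=0$, so the construction degenerates on the left cells and \eqref{eq2.4} fails with positive $\tau_i$. What actually works (and is what the paper does) is to group the weights by position---$\tau_L$ on the left $5$ cells, $\tau_T$ on the top $3$, $\tau_R$ on the right $9$---impose the ansatz $a=b=kc$ with $k>1$, solve for explicit positive $\tau_L,\tau_R,\tau_T$ as rational functions of $k$, and take the matching asymmetric initial energy $E_0(u)=(u(p_1)-u(p_2))^2+k(u(p_2)-u(p_3))^2+k(u(p_3)-u(p_1))^2$; the rest of your limiting argument then goes through as in Theorem \ref{th6.1}. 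So the skeleton of your plan is right, but both the symmetry-based reduction and the initial energy must be replaced before the proof is valid.
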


\medskip

\begin {proof} We will determine three renormalization factors $\tau_L, \tau_R, \tau_T$ on the cells of $K$ as follows:  let

\vspace{0.1cm}
$\tau_1=\tau_2=\cdots=\tau_5=\tau_{L}$ on the left $5$ sub-triangles $F_1(K),F_2(K),\cdots,F_5(K)$;

\vspace {0.1cm}
$\tau_6=\tau_7=\tau_8=\tau_{T}$ on the $3$ top sub-triangles $F_6(K),F_7(K),F_8(K)$;

\vspace {0.1cm}
$\tau_9=\tau_{10}=\cdots=\tau_{17}=\tau_{R}$ on the right $9$ sub-triangles $F_9(K),F_{10}(K),\cdots,F_{17}(K)$.

\vspace {0.2cm}

\noindent Then similar to Lemma \ref{th5.5}, we obtain the trace map:
{\small \begin{align*}
 \Phi_{\tau_L,\tau_R,\tau_T}(a,b,c) = & \ (a',b',c')\\ \nonumber
 =&\  \Big({\tau_L}(5a+5c)+{\tau_T}{\left(a+\varphi_a\right)}, \ {\tau_R}(6a+7b+5c)+{\tau_T}{\left(b+\varphi_b\right)},\ {\tau_T}{\left(c+\varphi_c\right)}\Big).
\end{align*}}
where $\varphi_a = \frac{(a+b)(a+c)}{2(a+b+c)}$, and define $\varphi_b$, $\varphi_c$ symmetrically.  Let us take  $a=b=kc$ with $k>1$ and solve the equation
\begin{equation} \label{eq6.8}
\Phi_{\tau_L,\tau_R,\tau_T}(a,b,c)=(a,b,c),
\end{equation}
we obtain
\begin{align*}
  \tau_L =\frac{k(k-1)}{5(k^2+6k+3)}, \quad
  \tau_R =\frac{k^2-1}{(13k+5)(k^2+6k+3)}, \quad
  \tau_T =\frac{2(2k+1)}{k^2+6k+3}.
\end{align*}
Let $E_0(u)=(u(p_1)-u(p_2))^2+k(u(p_2)-u(p_3))^2+k(u(p_3)-u(p_1))^2$ on $V_0$,  define
\begin{equation*}
 \mathcal{E}[u]=\lim\limits_{n\to\infty}\sum_{|\omega|=n}{\tau_\omega}^{-1}E_0(u\circ F_\omega),
\end{equation*}
and let
$
 \mathcal{F}=\{u\in C(K):\mathcal{\mathcal E}(u)<\infty\}.
$
Then $(\mathcal{E},\mathcal{F})$ is a regular local Dirichlet form on $L^2(K,\mu)$, and satisfies the self-similar identity
$$
\mathcal{E}[u]= \sum_{i=1}^{17} {\tau_i}^{-1} \mathcal{E}[u\circ F_i], \qquad u \in {\mathcal F}.
$$
\end{proof}

\noindent  {\bf Remark}.  Unlike the eyebolted Vicsek cross, we cannot get the other Dirichlet form on the Sierpinski sickle through the reverse recursive construction. Indeed, for any nonnegative initial value $(a_0, b_0, c_0)$ with $a_0+b_0+c_0=1$ for the sub-triangles in $V_n$ (we can assume this because ${\mathbf \Phi}(\lambda(a_0, b_0, c_0) )=\lambda{\mathbf \Phi}(a_0, b_0, c_0), \ \lambda >0$), let $(a_n,b_n,c_n)={\mathbf \Phi}^n (a_0, b_0, c_0)$ be the trace.
We claim that,
$\frac{b_n}{a_n+c_n}$ goes to infinity very fast, that is
\begin{equation*}
\frac1{a_n+b_n+c_n}(a_n,b_n,c_n)\rightarrow(0,1,0) \ \ \text{ uniformly as } n\rightarrow\infty.
\end{equation*}
Indeed,if $c_0 \leq a_0 \leq b_0$, then by Lemma \ref{th5.6}, we see that there exists $0<\lambda_0<1$ such that for all $n\geq0$,
\begin{equation*}
\frac{b_n}{a_n+c_n}\geq  \frac1{2\lambda_0^n}.
\end{equation*}
If $a_0 \leq c_0 \leq b_0$, then by a direct calculation, $b_1\geq a_1\geq c_1$  and reduces to the previous case.  Finally if $b_0\leq a_0$ (or $b_0\leq c_0$), then
$
\frac{b_1}{a_1}\geq \frac{6a_0+5c_0+8b_0}{6a_0+5c_0+(a_0+b_0)/2}\geq \frac{12}{13},
$
hence
$$
\frac{b_2}{a_2}\geq \frac{6a_1+5c_1+8b_1}{6a_1+5c_1+\frac{a_1+b_1}2}\geq 1.
$$
Also we have $c_2\leq a_2$ by a similar calculation, and hence reduce back to the first case. We checked all the cases and the claim follows.

\medskip

Now if we adopt the same method as in the second construction in Theorem \ref{th6.1}, on the one hand, $\mathbf y_0=(0,1,0)$ is not an interesting choice (as $\mathbf{ y}_{n} = {\mathbf \Phi}^{-n} (\mathbf{ y}_0)=(\frac 2{17})^n\mathbf{ y}_0$ by \eqref{eq5.3}); on the other hand, for any initial value $\mathbf y_0\neq (0,1,0)$, we can not expect to have a non-negative sequence $\{{\mathbf y}_n\}_n$ such that ${\mathbf \Phi} (\mathbf{ y}_n)=\mathbf y_{n-1}$ for all $n>0$.

%

\medskip

\bigskip

\section {\bf Other variances and remarks}

\bigskip

 For the  eyebolted Vicsek cross, if we lift the lower right eyebolt to the upper right position, then the abnormality of the density in Theorem \ref {th5.4} will not appear. We can show that for this new $K$ and with the primal energy, then $\sigma^*=\sigma^\#=\frac{\log21+\log(35/4)}{\log9}$.   As in the first construction in Theorem \ref{th6.1}, we can obtain a self-similar energy form with the renormalization factor  $r = \frac4{35}$ by simply solving equations. By some further computation, we can obtain a local regular Dirichlet form  that  the domain is the associate Besov space $B^{\sigma^*}_{2, \infty}$. We omit the detail.

\bigskip

 For the Sierpinski sickle, if we try to simplify the set by reducing some maps, then we will end up with a homogeneous resistance rate  for $R_n(p, q), \ p, q \in V_0$  for the primal energy, as is shown in the following proposition.

\bigskip

\begin {proposition} \label{th7.1} Consider the self-similar set $K_1$ generated by the IFS  with $15$ maps and contraction ratio $\rho=1/7$ as shown in  Figure \ref{fig10}. Then the relationship of the resistance of the cells on any two levels is given by (as in Lemma \ref{th5.5}):
\small {\begin{equation*}
  {\mathbf \Phi}(a,b,c)=\Big(6a+5c+\varphi_a,\ 5a+6b+4c+\varphi_b,\ c+\varphi_c\Big).
\end{equation*}}
\indent If we let  $(a_n,b_n,c_n)={\mathbf\Phi}^{n}(1,1,1)$  (the $(1,1,1)$ is from the primal energy form on $V_n$), then there exists $\lambda>1$ such that
\begin{equation}\label{eq7.1}
 a_n\asymp b_n\asymp c_n\asymp\lambda^n.
\end{equation}
There exists a Dirichlet form on $L^2(K,\mu)$ satisfying the energy self-similar identity with renormalization factor $\lambda^{-1}$, and the Dirichlet form has $B^{\sigma^*}_{2, \infty}$ as the domain (similar to the Sierpinski gasket).
\end{proposition}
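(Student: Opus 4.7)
The plan is as follows. First I would verify the formula for $\mathbf{\Phi}$ by the same $\Delta$--$Y$ reduction used in Lemma \ref{th5.5}. Starting from the $Y$-form network on $V_1'$, one applies the series law along each of the $15$ cells of the IFS (as read off from Figure \ref{fig10}), and then collapses each remaining interior $Y$-junction via $\Delta$--$Y$. The integer coefficients $6a+5c$, $5a+6b+4c$, and $c$ in the three components of $\mathbf{\Phi}$ record how many chains of cells join each pair of boundary vertices, while the symmetric $\varphi_\bullet$ terms come from the interior collapses exactly as in Lemma \ref{th5.5}; this step is bookkeeping.

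Second, I would prove \eqref{eq7.1} by a projective fixed-point argument rather than the explicit trick used for the Sierpinski sickle. The map $\mathbf{\Phi}$ is continuous, positively homogeneous of degree $1$, and maps the closed positive cone $\overline{\mathcal{C}}=\{(a,b,c):a,b,c\ge 0\}\setminus\{0\}$ into the open cone $\mathcal{C}=\{a,b,c>0\}$ after at most two iterates (each coefficient visible in the formula is nonnegative and their sum on every output coordinate is strictly positive). Projectivizing to $\widetilde{\mathbf{\Phi}}:\Sigma\to\Sigma$ on the simplex $\Sigma=\{(a,b,c):a+b+c=1,\,a,b,c\ge 0\}$, Brouwer's theorem produces an interior fixed point $(\bar a,\bar b,\bar c)$, so there are $\lambda>0$ and positive $(a^*,b^*,c^*)$ with $\mathbf{\Phi}(a^*,b^*,c^*)=\lambda(a^*,b^*,c^*)$. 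Because a single iterate sends $\overline{\mathcal{C}}$ strictly into $\mathcal{C}$, Birkhoff's theorem in the Hilbert projective metric gives a strict contraction of $\widetilde{\mathbf{\Phi}}$; the positive eigendirection is therefore unique and attracting, so that $\lambda^{-n}(a_n,b_n,c_n)\to$ a positive multiple of $(a^*,b^*,c^*)$ with exponential rate, which yields \eqref{eq7.1}. That $\lambda>1$ is immediate from $\mathbf{\Phi}(1,1,1)=(35/3,47/3,5/3)$: iterating and using Perron--Frobenius monotonicity forces $\lambda>1$.

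Third, I would construct the Dirichlet form following the template in the first construction of Theorem \ref{th6.1} and Theorem \ref{th6.2}. Let $E_0[u]$ be the quadratic form on $V_0$ whose three edge-conductances are $1/a^*$, $1/b^*$, $1/c^*$, set $\tau_i=\lambda^{-1}$ for all $1\le i\le 15$, and define
\begin{equation*}
\mathcal{E}_n[u]=\lambda^n\sum_{|\omega|=n}E_0(u\circ F_\omega),\qquad \mathcal{E}[u]=\lim_{n\to\infty}\mathcal{E}_n[u].
\end{equation*}
The eigen-relation $\mathbf{\Phi}(a^*,b^*,c^*)=\lambda(a^*,b^*,c^*)$ is precisely the compatibility condition that makes $\{\mathcal{E}_n[u]\}_n$ nondecreasing on harmonic extensions (compare \eqref{eq2.4'}), so $\mathcal{E}$ is well-defined in $[0,\infty]$. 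Standard Kigami theory \cite{K,S} then yields a local regular self-similar Dirichlet form $(\mathcal{E},\mathcal{F})$ with $\mathcal{F}=\{u\in C(K):\mathcal{E}[u]<\infty\}$ satisfying the identity \eqref{eq2.4} with renormalization factor $\lambda^{-1}$. To identify $\mathcal{F}$ with $B^{\sigma^*}_{2,\infty}$, note that $E_0$ and the primal cell-energy $E_0^{\mathrm{pr}}[u]=\sum_{p,q\in V_0}|u(p)-u(q)|^2$ are both positive-definite on $\ell(V_0)/\mathbb{R}$ and therefore comparable, giving $\mathcal{E}_n[u]\asymp\lambda^n E_n[u]$ uniformly in $n$ and $u$; by Proposition \ref{th1.1} and the equality $\lambda=\rho^{-(2\sigma^*-\alpha)}$, $\sup_n\mathcal{E}_n[u]<\infty$ iff $u\in B^{\sigma^*}_{2,\infty}$, and Proposition \ref{th2.2} provides the required continuous embedding into $C(K)$.

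The main obstacle is the second step. On the Sierpinski sickle the analogous projective orbit was attracted to the boundary vertex $(0,1,0)$ of $\Sigma$, producing two distinct growth rates; to obtain \eqref{eq7.1} here one must \emph{rule out} any such boundary attraction. The cleanest route is the Birkhoff--Hilbert argument outlined above, whose only nontrivial input is the strict positivity of $\mathbf{\Phi}$ on $\overline{\mathcal{C}}$; this can be read off directly from the explicit formula of $\mathbf{\Phi}$, since each of $a'$, $b'$, $c'$ contains a strictly positive combination of $a$, $b$, $c$ once the $\varphi_\bullet$ terms are included.
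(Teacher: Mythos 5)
There is a genuine gap in your second step, and it sits exactly at the delicate point of this proposition. Your Birkhoff--Hilbert argument rests on the claim that some iterate of $\mathbf{\Phi}$ maps the closed cone $\overline{\mathcal{C}}$ strictly into the open cone $\mathcal{C}$. This is false: the boundary ray through $(0,1,0)$ is invariant. Indeed, with $\varphi_a=\frac{(a+b)(a+c)}{2(a+b+c)}$ and $\varphi_b,\varphi_c$ defined symmetrically, one computes $\varphi_a(0,1,0)=\varphi_c(0,1,0)=0$ and $\varphi_b(0,1,0)=\tfrac12$, so
\begin{equation*}
\mathbf{\Phi}(0,1,0)=\Bigl(0,\ \tfrac{13}{2},\ 0\Bigr),
\end{equation*}
i.e.\ $(0,1,0)$ is an eigendirection on the boundary of the cone (eigenvalue $13/2$), and by homogeneity no power of $\mathbf{\Phi}$ ever pushes it into the interior. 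Consequently $\mathbf{\Phi}$ is not strictly positive on $\overline{\mathcal{C}}$, Birkhoff's theorem does not give a strict Hilbert-metric contraction on the whole projectivized cone, and your conclusion that the positive eigendirection is unique and globally attracting does not follow. This is precisely the danger you flag at the end (boundary attraction, as happens for the Sierpinski sickle where the analogue of $(0,1,0)$ is the attracting direction with rate $17/2$ versus $7$); your proposed fix --- ``strict positivity of $\mathbf{\Phi}$ on $\overline{\mathcal{C}}$ can be read off from the formula'' --- is exactly what fails, since both $a'$ and $c'$ vanish at $(0,1,0)$.

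The paper closes this gap by hand rather than by soft cone theory: it first shows the orbit of $(1,1,1)$ stays in a compact sub-cone away from $(0,1,0)$, namely $c_n\le a_n\le b_n$, $a_n\le 34\,c_n$, and (via the inequalities $a_{n+1}+c_{n+1}\ge\frac{20}{3}(a_n+c_n)$ and $b_{n+1}\le\frac{13}{2}b_n+5(a_n+c_n)$ plus induction) $b_n\le 60\,a_n$; it then gets the common growth rate $\lambda$ from $\mathbf{\Phi}^{m+n}(1,1,1)\asymp b_n\,\mathbf{\Phi}^m(1,1,1)$ and a subadditivity argument, and finds the eigenvector by applying Brouwer's theorem to the normalized map on the truncated simplex $D_\varepsilon=\{a+c\ge\varepsilon\}$, chosen specifically to exclude the repulsive boundary fixed point $(0,1,0)$. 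If you want to keep a projective-metric argument, you would have to first prove an invariant sub-cone bounded away from $(0,1,0)$ (which amounts to the same explicit estimates) and apply Birkhoff there. Your first step ($\Delta$--$Y$ bookkeeping for the formula of $\mathbf{\Phi}$) and your third step (compatibility from the eigen-relation, Kigami's theory, comparability of the eigen-conductances with the primal energy to identify the domain with $B^{\sigma^*}_{2,\infty}$) are fine and consistent with what the paper does, but they are conditional on repairing step two.
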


\begin{proof}
It is clear that $c_n\leq a_n\leq b_n$, and using this, it is not hard to show that
$a_n\leq 34c_n$. Furthermore, we claim that $b_n \leq 60a_n$, then $a_n\asymp b_n \asymp c_n$.

\medskip
To prove the claim, by using the fact that $a_n+2b_n+c_n\geq \frac{4}{3}(a_n+b_n+c_n)$, we have
\begin{align}
a_{n+1}+c_{n+1}  =6a_n+6c_n+\frac{(a_n+c_n)(a_n+2b_n+c_n)}{2(a_n+b_n+c_n)}
   \geq \frac{20}{3}(a_n+c_n).\label{eq7.2}
\end{align}
On the other hand,
\begin{align}
  b_{n+1} & = 5a_n+6b_n+4c_n+\frac{(a_n+b_n)(b_n+c_n)}{2(a_n+b_n+c_n)}\notag\\
  & \leq 5a_n+\frac{13}{2}b_n+\frac{9}{2}c_n\leq \frac{13}{2}b_n+5(a_n+c_n).\label{eq7.3}
\end{align}
Combining (\ref{eq7.2}) and (\ref{eq7.3}), and using induction, we obtain
\begin{equation*}
b_n\leq 30(a_n+c_n) \leq 60 a_n,
\end{equation*}
and the claim follows.

\medskip

Note that ${\mathbf \Phi} :{\Bbb R}_+^3\rightarrow {\Bbb R}_+^3$  satisfies  ${\mathbf \Phi}({\bf x})\leq {\mathbf \Phi}({\bf y})$ for any  ${\bf x}\leq {\bf y}$ (coordinate-wise defined), and ${\mathbf \Phi}(c{\bf x})=c{\mathbf \Phi}({\bf x})$ for any $c>0$. For $n,m\geq1$, we have
\begin{align*}
(a_{m+n},b_{m+n},c_{m+n})&=\Phi^{(n+m)}(1,1,1)=\Phi^{(m)}(a_n,b_n,c_n)=b_n\cdot \Phi^{(m)}\left(\frac{a_n}{b_n},1,\frac{c_n}{b_n}\right)\\
&\asymp b_n\cdot \Phi^{(m)}\left(1,1,1\right)=b_n\cdot(a_m,b_m,c_m).
\end{align*}
Then (\ref{eq7.1}) follows by using a sub-additive argument.

\vspace {0.1cm}
 We show that the above $\Phi$ defines a self-similar energy by using a fixed point theorem argument.
 Let  $D=\{(a,b,c):\ a+b+c=1,a, b, c \geq 0\}$ be the simplex, and let ${\widetilde{\mathbf \Phi}}: D\to D$ be the normalization of $\Phi$, i.e.,
 \begin{equation*}
 {\widetilde{\mathbf \Phi}}(a,b,c)=\frac1{11a+6b+10c+\varphi_a+\varphi_b+\varphi_c}\Phi (a, b, c).
 \end{equation*}
For $\varepsilon>0$, let $D_{\varepsilon}=\{(a,b,c)\in D:\ a+c\geq\varepsilon\}$, then we can show by computation that we can choose $\varepsilon$ small enough so that $\widetilde {\mathbf \Phi}$ maps $D_\varepsilon$ to $D_\varepsilon$. By applying the Brouwer's fixed point theorem to $\widetilde{\mathbf \Phi}$ on $D_{\varepsilon}$, we can find  a fixed point $p$ of $\widetilde{\mathbf \Phi}$ on $D_{\varepsilon}$. Obviously $p$ can not be $(1,0,0)$ or $(0,0,1)$.
 Assume that $p=(a_0,b_0,c_0)$ and ${\widetilde{\mathbf \Phi}}(a_0,b_0,c_0)=(a_0,b_0,c_0)$, consequently ${\mathbf \Phi}(a_0,b_0,c_0)=\lambda(a_0,b_0,c_0)$, where $\lambda$ is the same parameter as in (\ref{eq7.1}). Thus by transforming $(a_0,b_0,c_0)$ back to the $\Delta$-form, we get the conductance $(c_0(p_1,p_2),c_0(p_2,p_3),c_0(p_3,p_1))$. We can set
{\small \begin{equation*}
 {\mathcal E}_0(u)=c_0(p_1,p_2)(u(p_1)-u(p_2))^2+c_0(p_2,p_3)(u(p_2)-u(p_3))^2+c_0(p_3,p_1)(u(p_3)-u(p_1))^2.
\end{equation*}}
Set ${\mathcal E}(u)=\lim\limits_{n\rightarrow}\lambda^n\sum\limits_{|\omega|=n}{\mathcal E}_0(u\circ F_\omega)$ and ${\mathcal F}=\{u\in C(K): {\mathcal E}(u)<\infty\}$. $({\mathcal E},{\mathcal F})$ gives the self-similar energy with renormalization factor $\lambda^{-1}$.
Hence $B^{\sigma^*}_{2, \infty}$ is the domain of this Dirichlet form.
\end{proof}

\bigskip

We remark that the  using $D_\varepsilon$ instead of $D$ is to avoid the fixed point $(0,1,0)$. This fixed point $(0,1,0)$ turns out to be repulsive\cite{Me,Sa}.
As in Proposition \ref{th7.1}, the same situation happens if we consider $\rho=1/6$ or $\rho=1/5$ (Figure \ref{fig11} and Figure \ref{fig12}).  For $K_2$,  the resistances  relationship  is (see (\ref{eq5.3}))
 \begin{equation*}
 {\mathbf \Phi} (a,b,c)=\Big(5a+4c+\varphi_a,\ 4a+5b+4c+\varphi_b,\ c+\varphi_c\Big).
\end{equation*}
and for $K_3$,
\begin{equation*}
 {\mathbf \Phi} (a,b,c)=\Big(4a+3c+\varphi_a,\ 2a+4b+3c+\varphi_b,\ c+\varphi_c\Big).
\end{equation*}
We can proceed similar to Proposition \ref{th7.1}.

\begin{figure}[h]
\begin{tabular}{cc}
\begin{minipage}[t]{1.7in} \includegraphics[width=1.7in]{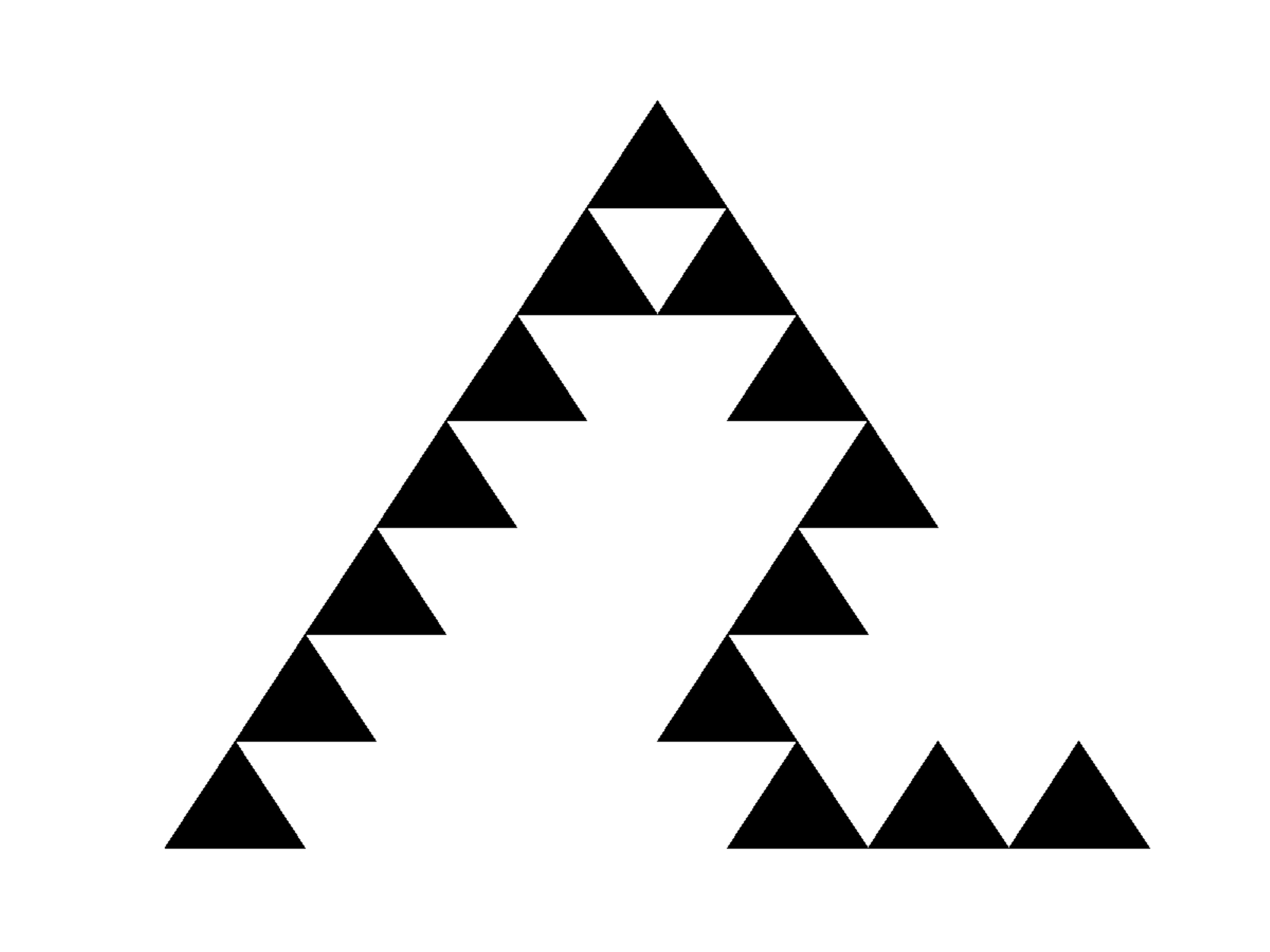}
\caption{$K_1$}\label{fig10}
 \end{minipage} \quad
 \begin{minipage}[t]{1.7in}
\includegraphics[width=1.7in]{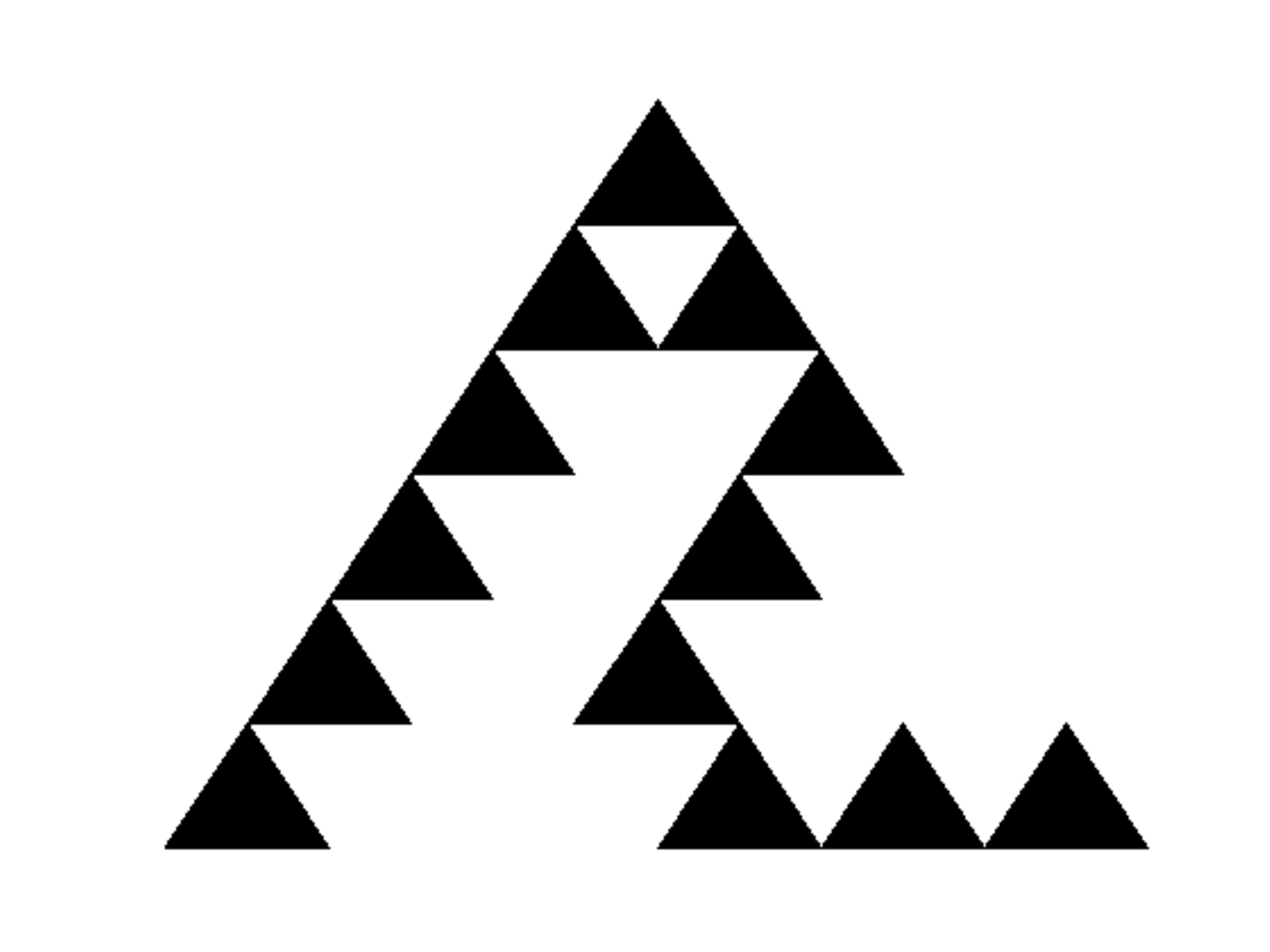}
\caption{$K_2$}\label{fig11}
\end{minipage}
\begin{minipage}[t]{1.7in} \includegraphics[width=1.7in]{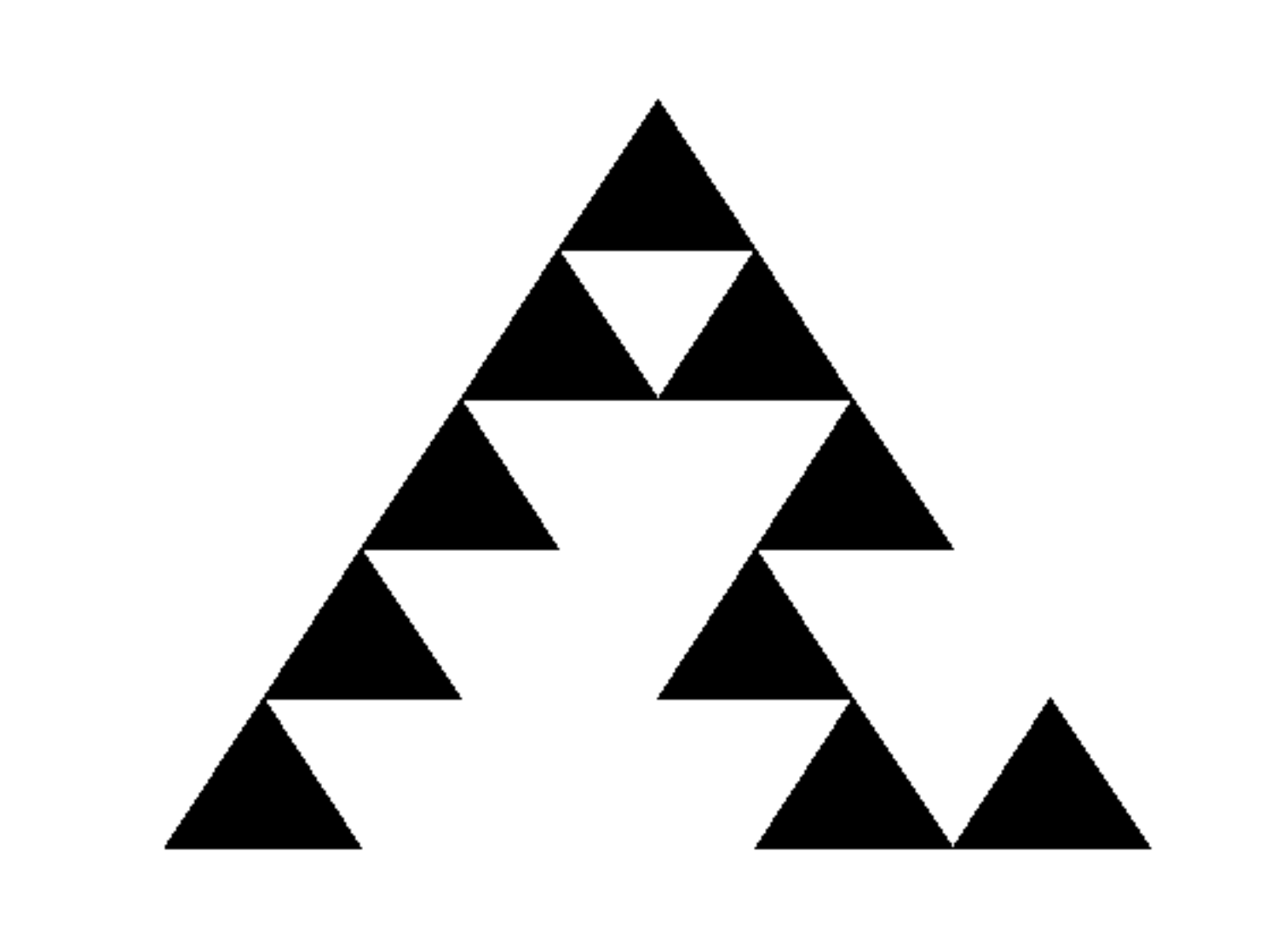}
 \caption{$K_3$}\label{fig12}
 \end{minipage}
 \end{tabular}
\end{figure}

\bigskip

The construction of the energy form by the reverse recursive method was implicitly used by Hattori, Hattori and Watanabe on the Sierpinski gasket \cite{HHW} through a probability consideration, and they call the limit an {\it asymptotically one dimensional diffusion}. This diffusion was investigated further by  Hambly and Kumagai\cite{HK1} on some other nested fractals (see also \cite{HJ,HY}). Recently, in \cite{GLQ}, the authors gave a detail study of this method on the Sierpinski gasket from an analytic point of view; they showed a dichotomy result that for any initial data, the Dirichlet forms obtained are either the standard or the one in \cite{HHW}. They also obtained  sharp estimate of the eigenvalue counting functions of the associated Laplacian with respect to the $\alpha$-Hausdorff measure.  The construction seems to be quite intuitive, but it has limitation (as it fails on  the Sierpinski sickle). It will be interesting to find out the validity of this method  on the more general class of fractals, and to investigate problems related to the associated Laplacian.

\bigskip

Besides the spaces $B^\sigma_{2, \infty}$,  there is another important class of Besov spaces that is associated with the Dirichlet forms. Let
\begin{equation} \label {eq7.4}
[u]^2_{B_{2,2}^\sigma}:=\int_K\int_K\frac{|u(x)-u(y)|^2}
{|x-y|^{\alpha+2\sigma}}d\mu(y)d\mu(x),
\end{equation}
and define $B^\sigma_{2,2}:=\{u\in L^2(K, \mu):||u||_{B^\sigma_{2,2}}<\infty\}$, with norm
$||u||_{B^\sigma_{2,2}}:=||u||_2+ [u]_{B_{2,2}^\sigma}$.  This family of spaces is the domain of some non-local Dirichlet forms, and is associated with the class  fractional Laplacians, and the stable jump processes \cite{CK}. In a recent study of boundary theory of random walks \cite {KLW}, it was shown that  for a class of random walks, the Martin boundary can be identified with the self-similar set $K$ (not necessary p.c.f. set), and the induced Dirichlet form on the boundary has  the expression in \eqref{eq7.4}.  In such setting the critical exponents of the $B_{2,2}^\sigma$ in connection with the random walk has also been studied in \cite{KL}.

\medskip

To put
\eqref{eq7.4} into the previous framework, it is not hard to see that the semi-norm $[u]_{B_{2,2}^\sigma}^2$ is equivalent to
\begin{equation}\label{eq7.5}
\int_0^1\frac{dr}{r}\cdot\frac{1}{r^{\alpha+2\sigma}}\int_K\int_{B(x,r)}(u(x)-u(y))^2d\mu(y)d\mu(x),
\end{equation}
which  can also be expressed as
$
\sum\limits_{n=0}^\infty\rho^{-n(\alpha+2\sigma)}\int_K\int_{B(x,\rho^n)}(u(x)-u(y))^2d\mu(y)d\mu(x).
$
Similar to Proposition \ref{th1.1} (see also \cite {Bo}) , we have the following discretization of $[u]^2_{B^\sigma_{2,2}}$.

\bigskip
\begin {proposition}\label{th7.2}
 Suppose the IFS $\{F_i\}_{i=1}^N$ is as in \eqref{eq2.1} and has the p.c.f. property. Then for $2\sigma> \alpha$,
\begin{equation}\label{eq7.6}
[u]^2_{B^\sigma_{2,2}}\asymp
\sum_{j=0}^\infty \rho^{-(2\sigma-\alpha)j}{\sum}_{x,y\in V_\omega,\ |\omega| = j}\big|u(x)-u(y)\big|^2.
\end{equation}
\end{proposition}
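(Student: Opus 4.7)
The plan is to reduce Proposition 7.2 to the per-scale discretization that already underlies Proposition 1.1, and then sum over scales instead of taking a supremum. First I would rewrite $[u]^2_{B^\sigma_{2,2}}$ in the layer-cake form (7.5) and split the radial integral as $\int_0^1 \tfrac{dr}{r} = \sum_{n=0}^\infty \int_{\rho^{n+1}}^{\rho^n} \tfrac{dr}{r}$. On each dyadic ($\rho$-adic) piece the factor $r^{-\alpha-2\sigma}$ is comparable to $\rho^{-n(\alpha+2\sigma)}$, while the inner double integral is monotone in $r$ and satisfies
$$\int_K\!\!\int_{B(x,\rho^{n+1})}\!\!|u(x)-u(y)|^2 d\mu(y) d\mu(x) \le \int_K\!\!\int_{B(x,r)}\!\!|u(x)-u(y)|^2 d\mu(y) d\mu(x) \le \int_K\!\!\int_{B(x,\rho^{n})}\!\!|u(x)-u(y)|^2 d\mu(y) d\mu(x)$$
for $r\in[\rho^{n+1},\rho^n]$. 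The two outer expressions are comparable (the ratio of radii is $\rho$, a fixed constant), so (7.5) collapses to
$$[u]^2_{B^\sigma_{2,2}} \asymp \sum_{n=0}^\infty \rho^{-n(\alpha+2\sigma)} \int_K\int_{B(x,\rho^n)} |u(x)-u(y)|^2 d\mu(y) d\mu(x).$$

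Next I would invoke the per-scale discretization that is the actual engine of Proposition 1.1, namely that for every fixed $n\geq 0$,
$$\int_K\int_{B(x,\rho^n)} |u(x)-u(y)|^2 d\mu(y) d\mu(x) \asymp \rho^{2n\alpha} \sum_{x,y\in V_\omega,\ |\omega|=n} |u(x)-u(y)|^2,$$
with constants depending only on the IFS and on $\mu$. This is precisely the one-scale comparison established for p.c.f. self-similar sets in \cite{GL} (following \cite{J,Bo}); the difference in Proposition 1.1 is only that one takes $\sup_n$ on both sides, whereas here one sums. Substituting yields
$$[u]^2_{B^\sigma_{2,2}} \asymp \sum_{n=0}^\infty \rho^{-n(\alpha+2\sigma)}\cdot \rho^{2n\alpha} E_n[u] = \sum_{n=0}^\infty \rho^{-n(2\sigma-\alpha)} E_n[u],$$
which is exactly (7.6).

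The main obstacle, and the only nontrivial input, is the per-scale estimate in the second step. The $\gtrsim$ direction is the easy one: each pair $x,y\in V_\omega$ contributes via $y\in B(x,c\rho^n)\cap V_\omega$, and $\alpha$-regularity of $\mu$ converts the counting of cells containing $x$ into an $L^2$ integral with bounded overlap. The $\lesssim$ direction is the harder one and is where the p.c.f. structure enters: every pair $(x,y)\in K\times K$ with $|x-y|\le \rho^n$ must be joined by a chain of uniformly bounded length through vertices of $V_n$ (this is where one uses connectedness of $K$ together with the finite combinatorics of cell intersections on a p.c.f. fractal); Cauchy--Schwarz then dominates $|u(x)-u(y)|^2$ by a sum of primal differences along the chain, after first replacing $u(x),u(y)$ by averages over neighborhoods of radius $\asymp \rho^n$. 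The assumption $2\sigma>\alpha$ plays no role in the equivalence $\asymp$ itself (both sides may diverge simultaneously), but it is retained so that the functions involved admit continuous representatives on $V_*$ and the pointwise expression $E_n[u]$ is unambiguous, as in Proposition 1.1.
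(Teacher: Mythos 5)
Your high-level route (rewrite $[u]^2_{B^\sigma_{2,2}}$ as \eqref{eq7.5}, split $\int_0^1\frac{dr}{r}$ into $\rho$-adic blocks, then discretize each block) is the route the paper intends, but the step you single out as ``the only nontrivial input'' is not available: the two-sided \emph{per-scale} comparison $\int_K\int_{B(x,\rho^n)}|u(x)-u(y)|^2\,d\mu\,d\mu \asymp \rho^{2n\alpha}\,E_n[u]$ for each fixed $n$ is false, and it is not what Jonsson, Bodin or \cite{GL} establish (their equivalence is for the whole weighted family, with $\sup_j$ or $\sum_j$, not scale by scale). Both directions fail already on the unit interval viewed as the p.c.f. set generated by two maps with $\rho=1/2$, $\alpha=1$, $V_n$ the dyadic points of generation $n$: a continuous $u$ vanishing at every point of $V_n$ but equal to a tent of height $1$ on each $n$-cell has $E_n[u]=0$ while the integral at scale $2^{-n}$ is $\asymp 2^{-n}$, so the integral is not controlled by the same-scale discrete energy; conversely, a continuous $u$ equal to $1$ on spikes of width $\varepsilon\ll 2^{-n}$ around alternate vertices of $V_n$ and $0$ elsewhere has $\rho^{2n\alpha}E_n[u]\asymp 2^{-n}$ while the integral at scale $2^{-n}$ is $O(\varepsilon)$, so vertex values (which live on a $\mu$-null set) cannot be read off from the single-scale $L^2$ quantity. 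Analogous functions exist on any of the p.c.f. sets considered in the paper, so your sketched justifications of the two directions ($\alpha$-regularity plus counting for one, chains through $V_n$ plus Cauchy--Schwarz for the other) cannot be completed at a single scale.

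What the cited discretization actually provides is intrinsically multi-scale: one replaces $u(x)$, $u(y)$ by averages over balls or cells of radius $\asymp\rho^m$ and telescopes over $m\ge n$, obtaining one-sided estimates of the form ``$E_n[u]$ is bounded by a geometrically weighted sum of the integrals at scales $m\ge n$'' and ``the integral at scale $\rho^n$ is bounded by a geometrically weighted sum of $E_m[u]$ over $m\ge n-c$''. Your proposition then follows by multiplying by $\rho^{-(2\sigma-\alpha)n}$, summing in $n$, and interchanging the order of summation; the resulting inner geometric series converge precisely because $2\sigma>\alpha$, i.e. $\rho^{2\sigma-\alpha}<1$ absorbs the telescoping factors. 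This also corrects your closing remark: $2\sigma>\alpha$ is not merely needed for continuous representatives, it is what makes the scale-mixing sums close up, exactly as in the proof of Proposition \ref{th1.1}. So the skeleton of your argument is fine, but you must replace the false per-scale equivalence by these two cross-scale inequalities (equivalently, rerun the argument of \cite{Bo}, \cite{GL} with $\sum_j$ in place of $\sup_j$); as written, the proof has a genuine gap.
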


\bigskip

The spaces $B^\sigma_{2,2}$ satisfy the following inclusion relation:  for $0<\varepsilon<\sigma$,
$
B^\sigma_{2,2}\subseteq B^\sigma_{2,\infty}\subseteq B^{\sigma-\varepsilon}_{2,2}.
$
 Hence they share the same critical exponents as the class  $B^\sigma_{2,\infty}, \sigma>0$.   In view of Theorems  \ref{th5.4} and \ref{th5.8}, we have

\medskip

\begin{corollary}\label{th7.3}  For the Sierpinski sickle,
$B_{2,2}^{\sigma^*}$ is not dense in $C(K)$, and $B_{2,2}^{\sigma^\#}$ contains only constant functions. For the eyebolted Vicsek cross, $B^{\sigma^*}_{2, 2}$
contains only constant functions.
\end{corollary}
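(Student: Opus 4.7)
The plan is to exploit the \emph{summable} form of the Besov semi-norm provided by Proposition~\ref{th7.2}, namely $[u]_{B^\sigma_{2,2}}^2 \asymp \sum_{j=0}^\infty \rho^{-(2\sigma-\alpha)j} E_j[u]$, in contrast to the supremum in Proposition~\ref{th1.1}. Combined with the cell-by-cell trace estimate
\begin{equation*}
E_j[u]\;\geq\;\sum_{|\omega|=n}\sum_{i\neq k}\frac{|u(F_\omega(p_i))-u(F_\omega(p_k))|^2}{R_{j-n}(p_i,p_k)},\qquad j\geq n,
\end{equation*}
obtained by applying \eqref{eq4.1} on each $n$-cell to the restriction of $u$ and summing over $\omega$, finiteness of $[u]_{B^\sigma_{2,2}}$ forces $u(F_\omega(p_i))=u(F_\omega(p_k))$ for every $\omega$ whenever the series $\sum_{m\geq 0}\rho^{-(2\sigma-\alpha)m}/R_m(p_i,p_k)$ diverges. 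Since $B^\sigma_{2,2}\subset B^{\sigma-\varepsilon}_{2,\infty}$ embeds continuously into a H\"older space by Proposition~\ref{th2.2} (as $2\sigma>\alpha$ in all of the cases at hand), every $u$ under consideration has a continuous representative and pointwise values on $V_*$ are well defined.

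For the Sierpinski sickle at $\sigma=\sigma^*$, Proposition~\ref{th5.7} gives $R_n(p_3,p_1)\asymp 7^n=\rho^{-(2\sigma^*-\alpha)n}$, hence the series for the pair $(p_1,p_3)$ diverges and $u(F_\omega(p_1))=u(F_\omega(p_3))$ for all $\omega$. Taking $\omega=\emptyset$ already shows $B^{\sigma^*}_{2,2}\subset\{u\in C(K):u(p_1)=u(p_3)\}$, a proper closed subspace of $C(K)$, so $B^{\sigma^*}_{2,2}$ is not dense in $C(K)$. At $\sigma=\sigma^\#$ we have $\rho^{-(2\sigma^\#-\alpha)}=17/2$, and by Proposition~\ref{th5.7} the ratios $(17/2)^n/R_n(p_i,p_k)$ are bounded below by a positive constant for all three boundary pairs; the divergence of each series then forces $u(p_1)=u(p_2)=u(p_3)$, and iterating on every cell yields that $u$ is constant on $V_*$, hence on $K$ by continuity.

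For the eyebolted Vicsek cross at $\sigma=\sigma^*=\sigma^\#$ we have $\rho^{-(2\sigma^*-\alpha)}=9=R^*$. By Proposition~\ref{th5.2} each consecutive-vertex resistance satisfies $R_n(p_i,p_{i+1})\asymp 9^n$, so for each of the four adjacent boundary pairs the relevant series diverges; this forces $u(F_\omega(p_1))=u(F_\omega(p_2))=u(F_\omega(p_3))=u(F_\omega(p_4))$ on every cell, and connectivity of $K$ together with density of $V_*$ and continuity of $u$ yields that $u$ is constant on $K$.

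The main obstacle is the bookkeeping in the cell-by-cell restriction: one must verify that, after applying \eqref{eq4.1} on the $n$-cell $F_\omega(K)$ and summing over $j\geq n$, the resulting series $\sum_m\rho^{-(2\sigma-\alpha)m}/R_m(p_i,p_k)$ depends only on the pair $(p_i,p_k)$, so that the same divergence criterion kicks in at every scale and every cell uniformly. This follows cleanly from the self-similar form of the primal energy. The remaining ingredients, namely the resistance asymptotics from Propositions~\ref{th5.2} and~\ref{th5.7} and the H\"older embedding of Proposition~\ref{th2.2}, are already in place.
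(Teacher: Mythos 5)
Your proposal is correct and follows essentially the same route as the paper: both use the summable discretization of Proposition~\ref{th7.2} together with trace/resistance lower bounds on the primal energy (via \eqref{eq4.1} and the asymptotics of Propositions~\ref{th5.2} and~\ref{th5.7}) to force equality of boundary values on every cell. The only cosmetic difference is that you phrase the conclusion through divergence of $\sum_m \rho^{-(2\sigma-\alpha)m}/R_m(p_i,p_k)$, whereas the paper notes that the individual terms $\rho^{-(2\sigma-\alpha)m}E_m[u]$ must tend to zero and then contradicts this with a chain estimate along $\overline{p_1p_3}$; these are equivalent reformulations of the same argument.
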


\medskip
\begin{proof}
 For any $u\in B_{2,2}^{\sigma^*}$, by discretizing $[u]^2_{B_{2,2}^{\sigma^*}}$ as (\ref{eq7.6}), we have
\begin{equation*}
\sum_{m=0}^\infty7^m \sum_{x,y\in V_\omega;|\omega|=m}(u(x)-u(y))^2\asymp[u]^2_{B_{2,2}^{\sigma^*}}<\infty.
\end{equation*}
Then
\begin{equation}\label{eq7.7}
  \lim_{m\rightarrow\infty}7^m \sum_{x,y\in V_\omega;|\omega|=m}(u(x)-u(y))^2=0.
\end{equation}
From this, we claim that $u$ is constant in the direction of $\overline {p_1p_3}$. For if otherwise,  there must be some finite word $\tau$ such that $u\circ F_\tau(p_1)\neq u\circ F_\tau(p_3)$, then for any $n\geq0$,
\begin{equation*}
7^{|\tau|+n} \sum_{|\omega|=|\tau| +n}(u\circ F_\omega(p_1)-u\circ F_\omega(p_3))^2 \geq7^{|\tau|}\left(u\circ F_\tau(p_1)- u\circ F_\tau(p_3)\right)^2>0,
\end{equation*}
a contradiction to (\ref{eq7.7}), and the claim follows. This  implies that $B_{2,2}^{\sigma^*}$ is not dense in $C(K)$.
 We can also show that the rest statements are true by using similar arguments as before, the detail is omitted.
\end{proof}

\bigskip

\section *{\bf Appendix: Graph directed systems}
\bigskip

In here we give a brief supplement to the graph directed systems  of Mauldin and Williams \cite {MW} on the box count and the box dimension to suit our purpose in Theorem \ref{th3.2} and Lemma \ref{th3.4}.

 \bigskip

 Let  $({\mathcal V}, \Gamma)$ be a graph directed system with ${\mathcal V} = \{1, \cdots , N\}$ the set of vertices, and $\Gamma$ the set of edges on ${\mathcal V}\times {\mathcal V}$; let $\Gamma_{i,j}\subset \Gamma$ denote the set of edges from $i$ to $j$.  For each $(i,j)$, let  $\{F_e: e \in \Gamma_{i,j}\}$ be the associated contractive similitudes , then there exists non-empty compact sets $K_i$ such that
 \begin {equation} \label{eq7.8}
 K_i = \bigcup_j \bigcup_{e \in \Gamma_{i,j}} F_e(K_j).
 \end{equation}
 We assume that the family $\{F_e: e\in \Gamma\}$ satisfies the open set condition (OSC), i.e., there exists open sets $U_1, \cdots , U_N$  such that $  \bigcup_j \bigcup_{e \in \Gamma_{i,j}} F_e(U_j) \subset U_i$, and the sets in the union are non-overlapping. We also assume that all the $F_e$ has contraction ratio $\rho$ for simplicity, and that is what we use throughout the paper.

 \medskip

We first consider the graph is strongly connected, i.e.,  for every $i, j\in {\mathcal V}$, there exists a path from $i$ to $j$.  Corresponding to the graph directed system, there is an associated matrix  $N\times N$ matrix  $T = [n_{i, j}]$ where $n(i, j) = \#(\Gamma_{i, j})$.   For ${\bf 1}$ a column vector of $1$'s, $T{\bf 1}$ counts the number of subcells of each $K_i$ (see \eqref{eq7.8}). Let $N_i(n)$ be the number of subcells of the $K_i$ in the $n$-th iteration,  then $N_i(n) =T^n{\bf 1}$ . As $T$ is irreducible, by the Perron-Frobenius Theorem, the maximal eigenvalue $\lambda$ of $T$ is positive, and the eigenvector ${\bf v} > 0$. By using  $ c^{-1} {\bf 1} \leq  {\bf v} \leq c {\bf 1}$ for some $c>0$,  we can show that
$$
 N_i(n)\asymp \lambda^n.
$$
As all the $n$-level cells are of the same size $\rho^n$, and  each one  of them intersects at most $\ell$ of the $n$-level cells for some $\ell >0$ (by OSC), we see that
$$
\alpha =  \lim_{n\to \infty} \frac{\log N_i(n)}{ |\log \rho^n|} = \frac {\log \lambda }{|\log \rho|}
$$
is the box dimension of the $K_i, 1\leq i \leq N$ , which is also the Hausdorff dimension, and $0< {\mathcal H}^\alpha(K_i) <\infty$  \cite {F, MW}.

 \bigskip

If the directed graph is not strongly connected, we assume for simplicity that it has two strongly connected components ${\mathcal V}_1$ and ${\mathcal V}_2$, and we can write $T$ as
$$
 T =  \left[\begin{array}{cc}
                    T_1 &  Q\\
                      0 &  T_2
 \end{array}\right]
$$
where $T_1$ and $T_2$ are irreducible. Let $\lambda_1$, $\lambda_2$ be the eigenvalues of $T_1$ and $T_2$ respectively.  It is clear that   $K_i,\  i \in {\mathcal V}_2$ behaves the same as the above irreducible case.  To consider the $K_i, i \in {\mathcal V}_1$, we observe that
$$
 T^n =  \left[\begin{array}{cc}
                    T^n_1 &  Q_n\\
                      0 &  T^n_2
 \end{array}\right]
$$
with $Q_n = \sum_{k=0}^{n-1}T_1^kQT_2^{(n-1)-k}$. Similar to the above, it is easy to estimate
$
{\bf 1'}^{t} Q_n{\bf 1} \leq C \sum_{k =0}^{n-1}\lambda_1^k\lambda_2^{(n-1)-k},
$
where ${\bf 1'}$ and ${\bf 1}$  are column vector of $1$'s,  and have coordinates equals to $\#({\mathcal V}_1), \#({\mathcal V}_2)$ respectively.

\medskip

 Let $\lambda = \max \{\lambda_1, \lambda_2\}$. If $\lambda_1 \not = \lambda_2$,   then $ N_i(n) \asymp \lambda^n$ for $i\in {\mathcal V}_1$, and  $K_i$ has box dimension $\alpha =  \frac {\log \lambda }{|\log \rho|}$. On the other hand, if  $\lambda_1  = \lambda_2$, then $ N_i(n) \asymp n \lambda^n$ for $i\in {\mathcal V}_1$, and the box dimension is the same. The difference is that in the first case $0 < {\mathcal H}^\alpha (E_i) <\infty$, but in the second case ${\mathcal H}^\alpha (E_i) =\infty$ (which is also the only case that the $\alpha$-Hausdorff measure is infinite) \cite [Theorems 4 and 5]{MW}.

\bigskip
\bigskip
\bigskip

{\it Acknowledgement}. The authors would like to thank Professors D.J. Feng, J.X. Hu and Dr. S.L. Kong for many valuable discussions. They are also indebted to Professor B. Hambly for bringing their attention to a number of references.

\bigskip
\bigskip

\bibliographystyle{siam}

\bigskip
\bigskip

\end{document}